\documentclass[11pt, letterpaper]{article}    	
 \usepackage[margin=1in]{geometry}
\usepackage{setspace}
\usepackage{graphicx}	
\usepackage{subcaption}
\usepackage{float}

\usepackage[draft]{todonotes}   

\usepackage{enumerate}			
\usepackage{enumitem}
\setlist[enumerate]{itemsep=-0.5ex,partopsep=1ex,parsep=1ex}
								
\usepackage{amssymb}
\usepackage{amsmath, amsthm, amssymb}

\newtheorem{thm}{Theorem}[section]
\newtheorem{conj}[thm]{Conjecture}
\newtheorem{prop}[thm]{Proposition}
\newtheorem{claim}{Claim}
\newtheorem{lemma}[thm]{Lemma}
\newtheorem{cor}[thm]{Corollary}

\theoremstyle{definition}
\newtheorem{defi}[thm]{Definition}

\usepackage{tikz}
\newcounter{casenum}

\newcommand*{\rom}[1]{\expandafter{\romannumeral #1\relax}}

\usepackage{authblk}
\makeatletter
\def\@cite#1#2{{\normalfont[{\bfseries#1\if@tempswa , #2\fi}]}}
\makeatother

\title{The typical structure of Gallai colorings and their extremal graphs}

\author{ J\'ozsef Balogh\thanks{Department of Mathematical Sciences, University of Illinois at Urbana-Champaign, IL, USA, and Moscow Institute of Physics and Technology, 9 Institutskiy per., Dolgoprodny, Moscow Region, 141701, Russian Federation. Research of the first author is partially supported by NSF Grants DMS-1500121, DMS-1764123 and Arnold O. Beckman Research Award (UIUC) Campus Research Board 18132 and the Langan Scholar Fund (UIUC).} \ \ \ \ \ \ \ Lina Li\thanks{Department of Mathematics, University of Illinois at Urbana-Champaign, Urbana, Illinois 61801, USA. Email: linali2@illinois.edu.}}


\begin{document}
\maketitle

\begin{abstract}
An edge coloring of a graph $G$ is a \textit{Gallai coloring} if it contains no rainbow triangle. 
We show that the number of Gallai $r$-colorings of $K_n$ is $\left(\binom{r}{2}+o(1)\right)2^{\binom{n}{2}}$. This result indicates that almost all Gallai $r$-colorings of $K_n$ use only 2 colors. 
We also study the extremal behavior of Gallai $r$-colorings among all $n$-vertex graphs.
We prove that the complete graph $K_n$ admits the largest number of Gallai $3$-colorings among all $n$-vertex graphs when $n$ is sufficiently large, while for $r\geq 4$, it is the complete bipartite graph $K_{\lfloor n/2 \rfloor, \lceil n/2  \rceil}$. 
Our main approach is based on the hypergraph container method, developed independently by Balogh, Morris and Samotij as well as by Saxton and Thomason, together with some stability results.
\end{abstract}

\section{Introduction}
An edge coloring of a graph $G$ is a \textit{Gallai coloring} if it contains no rainbow triangle, that is, no triangle is colored with three distinct colors. The term \textit{Gallai coloring} was first introduced by Gy\'{a}rf\'{a}s and Simonyi~\cite{GS}, but this concept had already occurred in an important result of Gallai~\cite{G} on comparability graphs, which can be reformulated in terms of Gallai colorings. It also turns out that Gallai colorings are relevant to generalizations of the perfect graph theorem~\cite{CEL}, and some applications in information theory~\cite{KS}. There are a variety of papers which consider structural and Ramsey-type problems on Gallai colorings, see, e.g., \cite{FGP, GSa, GSSS, GS, W}.

Two important themes in extremal combinatorics are to enumerate discrete structures that have certain  properties and describe their typical properties.
In this paper, we shall be concerned with Gallai colorings from such an extremal perspective. 
 
\subsection{Gallai colorings of complete graphs}
For an integer $r\geq 3$, an \textit{$r$-coloring} is an edge coloring that uses at most $r$ colors.
By choosing two of the $r$ colors and coloring the edges of $K_n$ arbitrarily with these two colors, one can easily obtain that the number of Gallai $r$-colorings of $K_n$ is at least
\begin{equation}\label{2color:bound}
\binom{r}{2}\left(2^{\binom{n}{2}}-2\right)+r = \binom{r}{2}2^{\binom{n}{2}}-r(r-2).
\end{equation}
If we further consider all Gallai $r$-colorings of $K_n$ using exactly 3 colors, red, green, and blue, in which the red color is used only once, the number of them is exactly
\[
\binom{n}{2}\left(2^{\binom{n}{2} - (n-1)} - 2\right).
\]
Combining with (\ref{2color:bound}), for $n$ sufficiently large, a trivial lower bound for the number of Gallai $r$-colorings of $K_n$ is
\begin{equation}\label{tri:lbound}
\left(\binom{r}{2}+2^{-n}\right)2^{\binom{n}{2}}.
\end{equation}
Motivated by a question of Erd\H{o}s and Rothschild~\cite{E} and the resolution by Alon, Balogh, Keevash and Sudakov~\cite{ABKS}, Benevides, Hoppen and Sampaio~\cite{BHS} studied the general problem of counting the number of edge colorings of a graph that avoid a subgraph colored with a given pattern. In particular, they proved that the number of Gallai $3$-colorings of $K_n$ is at most $\frac32(n-1)!\cdot 2^{\binom{n-1}{2}}$. At the same time, Falgas-Ravry, O'Connell, and Uzzell~\cite{FOSU} provided a weaker upper bound of the form $2^{(1 + o(1))\binom{n}{2}}$, which is a consequence of the multi-color container theory. Very recently, Bastos, Benevides, Mota and Sau~\cite{BBMS} improved the upper bound to $7(n+1)2^{\binom{n}{2}}$. 
Note that the gap between the best upper bound and the trivial lower bound is a linear factor. We show that the lower bound is indeed closer to the truth, and this actually applies for any integer $r$. Our first main result is as follows.
\begin{thm}\label{rcomplete}
For every integer $r\geq3$, there exists $n_0$ such that for all $n>n_0$, the number of Gallai $r$-colorings of the complete graph $K_n$ is at most 
\[\left(\binom{r}{2}+2^{-\frac{n}{4\log^2 n}}\right)2^{\binom{n}{2}}.\]
\end{thm}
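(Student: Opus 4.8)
The plan is to reduce the theorem to a bound on the number of Gallai $r$-colorings of $K_n$ that use at least three colors. Write $g(n)$ for the number of Gallai $r$-colorings of $K_n$. Colorings that use at most two of the $r$ colors number at most $\binom r2 2^{\binom n2}$, so it suffices to prove that the number $f(n)$ of Gallai $r$-colorings of $K_n$ using at least three colors satisfies $f(n)\le 2^{-n/(4\log^2 n)}2^{\binom n2}$ for all large $n$. I would do this by induction on $n$, using Gallai's structure theorem: every Gallai $r$-coloring of $K_n$ has a \emph{Gallai partition} $V(K_n)=V_1\cup\dots\cup V_k$ with $k\ge 2$, in which every pair $V_i,V_j$ is joined by edges of a single color and only two colors, say $a$ and $b$, occur between parts (so the pattern of cross-colors is a $2$-coloring of the reduced $K_k$, and each $K_n[V_i]$ is itself Gallai-colored). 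Taking $k$ as small as possible, and breaking ties by a fixed rule, makes the partition a function of the coloring; moreover, when $k\ge 3$ this minimal partition forces both of $a,b$ to appear on the reduced graph, since otherwise two parts could be merged.

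Now fix a coloring counted by $f(n)$ and look at its canonical Gallai partition. If $k=2$, the coloring is determined by an ordered bipartition, a color for the cross edges, and Gallai-colorings of the two sides, so this case contributes at most $r\cdot\tfrac12\sum_{s=1}^{n-1}\binom ns g(s)\,g(n-s)$; using $\binom s2+\binom{n-s}2=\binom n2-s(n-s)$ together with $g(m)\le\big(\binom r2+1\big)2^{\binom m2}$ (which follows from the induction, with a finite verification for small $m$), this sum is dominated by $s\in\{1,n-1\}$ and is at most $2^{\binom n2-n+O(\log n)}$, comfortably below $\tfrac12 2^{-n/(4\log^2 n)}2^{\binom n2}$. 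If $k\ge3$ then the reduced coloring already uses both of $a$ and $b$, so, since at least three colors occur overall, some part $V_i$ must use a color outside $\{a,b\}$; hence the number of colorings with a fixed partition and fixed pair $\{a,b\}$ is at most $2^{\binom k2}\big(\prod_i g(|V_i|)-\prod_i 2^{\binom{|V_i|}{2}}\big)$. Summing over $k$, over $\binom r2$ choices of $\{a,b\}$, and over all partitions, the inequality $\binom k2+\sum_i\binom{|V_i|}{2}\le\binom n2$, with surplus $\sum_{i<j}(|V_i||V_j|-1)$, keeps every term at most $2^{\binom n2}$; the surplus is linear in $n$ unless almost every part is a singleton, but in that case the subtracted product equals $\prod_i g(|V_i|)$ (singletons carry no edges) and the contribution vanishes. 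Feeding the induction hypothesis into the large parts then closes the induction — \emph{provided} one can control the sum over partitions.

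That proviso is the crux, and it is where the hypergraph container method and the stability results enter. The difficulty is that most of the $\binom r2 2^{\binom n2}$ genuine $2$-colorings have all of their minimal Gallai partitions of size $k\ge 3$, so the crude estimate $\sum_{\text{partitions}}2^{\binom k2}\prod_i g(|V_i|)$ re-counts the $2$-colorings with a multiplicity equal to the enormous number of partitions and cannot be afforded; one must exploit the subtraction $-\prod_i 2^{\binom{|V_i|}{2}}$ precisely. I would first apply a multicolor container lemma to replace the family of all Gallai $r$-colorings by a small family of containers, each an almost–rainbow-triangle-free assignment of color-lists to the edges, and then invoke a stability statement for Gallai colorings: apart from a container family contributing at most $2^{\binom n2-n/(4\log^2 n)}$ colorings in total, every container has a global $2$-colored "core" together with a negligible defect, inside which colorings using a third color are forced and can be counted by the recursion above. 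The main obstacle is this stability step, together with tracking the logarithmic losses incurred in passing to containers and through the layers of the recursion; those losses are precisely what degrade the conjecturally correct exponent $-n$ to $-n/(4\log^2 n)$.
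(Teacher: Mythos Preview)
Your proposal has a genuine gap, and you have already put your finger on it: the Gallai--partition recursion does not close. The $k=2$ case is fine, but for $k\ge 3$ the subtraction $\prod_i g(|V_i|)-\prod_i 2^{\binom{|V_i|}{2}}$ does not vanish as soon as even one part has size at least~$2$; it is bounded by something like $\sum_j\bigl(g(|V_j|)-2^{\binom{|V_j|}{2}}\bigr)\prod_{i\ne j}g(|V_i|)$, and to turn this into a saving you must feed in the inductive bound on $f(|V_j|)$ for the specific part that carries the third colour. That part may be tiny, in which case the saving $2^{-|V_j|/(4\log^2|V_j|)}$ is worthless, while the number of compatible partitions is still enormous. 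Your proviso ``provided one can control the sum over partitions'' is exactly the whole problem, and nothing in the recursion handles it; this is why the Gallai--partition route in the literature stalls at a polynomial factor rather than reaching the constant $\binom r2$.

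Your pivot to containers and stability is the right instinct and is precisely what the paper does, but the paper does \emph{not} feed the stability back into the recursion. It abandons Gallai partitions entirely. After the container step one splits the containers into those with at most $2^{(1-\delta)\binom n2}$ Gallai colourings (handled by the container count) and the rest; a short averaging argument over triangles shows that any remaining template $P$ has at least $(1-O(\delta))\binom n2$ edges with a single common palette $\{i,j\}$. The real work, replacing your recursion, is a direct counting lemma: if an $r$-template has all but $O(\delta)\binom n2$ edges with palette $\{i,j\}$, then the number of Gallai colourings of $K_n$ inside it is at most $2^{\binom n2}+2^{-n/(3\log^2 n)}2^{\binom n2}$. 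This is proved by casework on the size of a maximum matching among the edges receiving a colour outside $\{i,j\}$, exploiting that any such edge forces many $\{i,j\}$-edges in triangles through it; no induction on $n$ and no partition enumeration is needed. The piece your proposal is missing is exactly this lemma (the paper's Theorem~\ref{2colorclass}); once you have it, the container and stability steps are short, and the $\log^2 n$ loss comes from the matching argument there rather than from tracking a recursion.
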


Given a class of graphs $\mathcal{A}$, we denote $\mathcal{A}_n$ the set of graphs in $\mathcal{A}$ of order $n$. We say that \textit{almost all graphs in $\mathcal{A}$ has property $\mathcal{B}$} if
$$\lim_{n\rightarrow \infty}\frac{|\{G\in \mathcal{A}_n: G \text{ has property } \mathcal{B}\}|}{|\mathcal{A}_n|}=1.$$
Recall that the number of Gallai $r$-colorings with at most 2 colors is $\binom{r}{2}2^{\binom{n}{2}}-r(r-2)$. Then the description of the typical structure of Gallai $r$-colorings immediately follows from Theorem~\ref{rcomplete}.
\begin{cor}
For every integer $r\geq3$, almost all Gallai $r$-colorings of the complete graph are 2-colorings.
\end{cor}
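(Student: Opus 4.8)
The statement is an immediate consequence of Theorem~\ref{rcomplete}, so the plan is purely a ratio estimate; all the real work has already been done in establishing the upper bound. Fix $r\geq 3$. Let $N_n$ denote the total number of Gallai $r$-colorings of $K_n$, and let $N_n^{(2)}$ denote the number of those that use at most $2$ colors. As recalled just before the corollary, we have the exact equality
\[
N_n^{(2)} = \binom{r}{2}2^{\binom{n}{2}} - r(r-2).
\]

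First I would record the two bounds we need. From the displayed lower bound~\eqref{2color:bound} (or simply from the equality above) we get $N_n \geq N_n^{(2)} = \binom{r}{2}2^{\binom{n}{2}} - r(r-2)$, and from Theorem~\ref{rcomplete}, for $n > n_0$,
\[
N_n \leq \left(\binom{r}{2} + 2^{-\frac{n}{4\log^2 n}}\right)2^{\binom{n}{2}}.
\]
Then I would form the ratio and estimate it below:
\[
\frac{|\{\text{Gallai $r$-colorings of $K_n$ using $\leq 2$ colors}\}|}{N_n}
= \frac{N_n^{(2)}}{N_n}
\geq \frac{\binom{r}{2}2^{\binom{n}{2}} - r(r-2)}{\left(\binom{r}{2} + 2^{-\frac{n}{4\log^2 n}}\right)2^{\binom{n}{2}}}
= \frac{\binom{r}{2} - r(r-2)2^{-\binom{n}{2}}}{\binom{r}{2} + 2^{-\frac{n}{4\log^2 n}}}.
\]

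To finish, I would observe that as $n\to\infty$ the term $r(r-2)2^{-\binom{n}{2}}\to 0$ and $2^{-\frac{n}{4\log^2 n}}\to 0$, while $\binom{r}{2}\geq 3$ is a fixed positive constant; hence the right-hand side tends to $\binom{r}{2}/\binom{r}{2}=1$. Since the ratio is also trivially at most $1$, it converges to $1$, which is exactly the definition of ``almost all Gallai $r$-colorings of the complete graph are $2$-colorings.'' There is no genuine obstacle here — the only point requiring any care is to make sure the bounds are applied in the correct direction (Theorem~\ref{rcomplete} upstairs in the denominator, the exact count downstairs in the numerator) and that both error terms are $o(1)$ relative to the constant $\binom{r}{2}$. \QEDB
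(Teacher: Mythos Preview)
Your proposal is correct and matches the paper's approach exactly: the paper simply states that the corollary ``immediately follows from Theorem~\ref{rcomplete}'' together with the exact count $\binom{r}{2}2^{\binom{n}{2}}-r(r-2)$ of $2$-colorings, and your ratio computation is precisely the routine verification of that sentence.
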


\subsection{The extremal graphs of Gallai colorings}
There have been considerable advances in edge coloring problems whose origin can be traced back to a question of Erd\H{o}s and Rothschild~\cite{E}, who asked which $n$-vertex graph admits the largest number of $r$-colorings avoiding a copy of $F$ with a prescribed colored pattern, where $r$ is a positive integer and $F$ is a fixed graph. In particular, the study for the extremal graph of Gallai colorings, that is the case when $F$ is a triangle with rainbow pattern, has received attention recently. 
A graph $G$ on $n$ vertices is \textit{Gallai $r$-extremal} if the number of Gallai $r$-colorings of $G$ is largest over all graphs on $n$ vertices. 
For $r\geq 5$, the Gallai $r$-extremal graph has been determined by Hoppen, Lefmann and Odermann~\cite{hoppen2015rainbow, hoppen2017graphs, hoppen2017rainbow}.

\begin{thm}\cite{hoppen2017graphs}
For all $r\geq 10$ and $n\geq 5$, the only Gallai $r$-extremal graph of order $n$ is the complete bipartite graph $K_{\lfloor n/2 \rfloor, \lceil n/2  \rceil}$. 
\end{thm}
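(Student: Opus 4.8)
The plan is to prove the equivalent statement that every $n$-vertex graph $G\neq K_{\lfloor n/2\rfloor,\lceil n/2\rceil}$ satisfies $g_r(G)<r^{\lfloor n^2/4\rfloor}$, where $g_r(G)$ denotes the number of Gallai $r$-colorings of $G$; the matching value $g_r(K_{\lfloor n/2\rfloor,\lceil n/2\rceil})=r^{\lfloor n^2/4\rfloor}$ is immediate, since $K_{\lfloor n/2\rfloor,\lceil n/2\rceil}$ is triangle-free and so all of its $r^{\lfloor n^2/4\rfloor}$ edge-colorings are Gallai. First I would dispose of the non-supercritical graphs. If $e(G)<\lfloor n^2/4\rfloor$ then trivially $g_r(G)\le r^{e(G)}<r^{\lfloor n^2/4\rfloor}$. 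If $e(G)=\lfloor n^2/4\rfloor$, then since $G\neq K_{\lfloor n/2\rfloor,\lceil n/2\rceil}$ the uniqueness part of Mantel's theorem forces $G$ to contain a triangle $T$; fixing any coloring on $E(G)\setminus E(T)$ leaves at most $3r^2-2r$ non-rainbow colorings of $T$, whence $g_r(G)\le r^{e(G)-3}(3r^2-2r)=\tfrac{3r-2}{r^2}\,r^{\lfloor n^2/4\rfloor}<r^{\lfloor n^2/4\rfloor}$. So the whole content lies in the supercritical case $e(G)=\lfloor n^2/4\rfloor+s$ with $s\ge 1$.

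The main engine in the supercritical range is a book-packing estimate. A \emph{book} with $k$ pages, that is an edge $uv$ together with $k$ common neighbours of $u$ and $v$, has exactly $r(3r-2)^k$ Gallai colorings; hence if $G$ contains edge-disjoint books with page-counts $k_1,\dots,k_\ell$, then colouring these books independently and colouring the remaining edges arbitrarily gives
\[
g_r(G)\le r^{e(G)}\left(\frac{3r-2}{r^2}\right)^{k_1+\cdots+k_\ell}.
\]
Writing $K(G)$ for the maximum of $k_1+\cdots+k_\ell$ over edge-disjoint book families, the right-hand side drops below $r^{\lfloor n^2/4\rfloor}$ as soon as $K(G)>C_r\,s$, where $C_r=\log r\,/\,\log\!\frac{r^2}{3r-2}$; one checks that $1<C_r\le 1.81$ for $r\ge 10$ and $C_r\to1$ as $r\to\infty$. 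Thus it suffices to prove that an $n$-vertex graph with $\lfloor n^2/4\rfloor+s$ edges, $s\ge1$, contains edge-disjoint books of total page-count exceeding $C_r\,s$, together with handling the genuinely dense regime.

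I would split the supercritical analysis according to the number of triangles. If $G$ has $\Omega(n^3)$ triangles — in particular if $e(G)\ge(\tfrac14+\varepsilon)n^2$ — I abandon book-packing and instead use the hypergraph container method exactly as in the proof of Theorem~\ref{rcomplete}: essentially all Gallai $r$-colorings of such a $G$ use only two colours, so $g_r(G)\le 2^{e(G)+o(n^2)}\le 2^{(1/2+o(1))n^2}=o(r^{n^2/4})$ since $r\ge5$. Otherwise $G$ has at most $\delta n^3$ triangles with $\delta=\delta(\varepsilon)$ small, so combining the triangle removal lemma with the Erd\H{o}s--Simonovits stability theorem (and $e(G)>\lfloor n^2/4\rfloor$), $G$ differs from $K_{\lfloor n/2\rfloor,\lceil n/2\rceil}$ in only $o(n^2)$ edges; fix the corresponding bipartition $X\cup Y$ and call an edge \emph{excess} if it lies inside a part, so there are between $s\ge1$ and $o(n^2)$ excess edges. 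After a routine cleanup deleting $o(n)$ atypical vertices, every excess edge lies in at least $n/3$ triangles through the opposite part, so a single excess edge already yields $K(G)\ge n/3$; moreover if the excess edges span a subgraph of maximum degree at most $n/100$, that subgraph has a matching of size at least $50s/n$, the books on the matching edges are automatically edge-disjoint, and $K(G)\ge 50s/3>C_r\,s$. What remains is the case where the excess edges are \emph{concentrated} on a bounded-size set $U$ of "dominating" vertices of degree $\ge n/2+n/100-o(n)$, where the relevant books share their page-edges and the packing bound collapses. This is the crux: here I would view $G$ as a bounded union of cones $v*H$ over near-complete-bipartite graphs and count the Gallai colorings of such a cone by first colouring $H$ and then counting the admissible colour-assignments to the cone-edges — these turn out to be so strongly constrained by the colours on $H$ that $g_r(v*H)$ loses an exponential factor relative to $r^{e(v*H)}$ — or, alternatively, apply Theorem~\ref{rcomplete} directly to the dense graph $G[U\cup N_G(U)]$; either route gives $g_r(G)<r^{\lfloor n^2/4\rfloor}$.

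The hard part is precisely this concentrated subcase: the excess edges there carry too little edge-disjoint triangle structure for any packing estimate, yet the graph is globally too sparse for the dense-graph container bound to apply as a black box, so one is forced into a somewhat delicate cone-counting argument whose bookkeeping must be done with care. This is also where the hypothesis $r\ge10$ is spent: it makes the two exchange rates — "$C_r$ pages per excess edge" and "one clique-edge costs $\tfrac12\log_2 r$ bipartite-edges" — loose enough that crude estimates already close the argument, whereas pushing the threshold down to $r\ge4$, as is done in the present paper, requires replacing them by the sharp container count. Finally, the finitely many small cases $5\le n\le n_0$ can be verified directly, all the estimates above becoming valid once $n$ exceeds a modest threshold.
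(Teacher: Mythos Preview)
This theorem is not proved in the present paper: it is quoted, with attribution, from Hoppen--Lefmann--Odermann~\cite{hoppen2017graphs} as prior work. There is therefore no ``paper's own proof'' to compare your proposal against. What the paper \emph{does} prove is the asymptotic strengthening (Theorem~\ref{rgraph}): for every $r\ge 4$ and $n$ sufficiently large, $K_{\lfloor n/2\rfloor,\lceil n/2\rceil}$ is the unique Gallai $r$-extremal graph. That proof proceeds by splitting into three density regimes (Theorems~\ref{rgraph: high},~\ref{rgraph: low},~\ref{rgraph: middle}) and handling each via container stability, a book-based structural argument using the Bollob\'as--Nikiforov stability theorem, and a supersaturation/container hybrid respectively.

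Your outline is in the same spirit as the paper's own asymptotic argument --- book counting in the near-bipartite regime, containers in the dense regime --- but it does not actually establish the stated theorem, which requires \emph{all} $n\ge 5$. Every substantive tool you invoke (containers, triangle removal, Erd\H{o}s--Simonovits stability, the ``routine cleanup deleting $o(n)$ atypical vertices'') is asymptotic, and your final sentence ``the finitely many small cases $5\le n\le n_0$ can be verified directly'' is not a proof: your $n_0$ is implicit and potentially enormous, since it depends on the error terms in removal and stability and on the ineffective constants in the container theorem. The Hoppen--Lefmann--Odermann proof of the $r\ge 10$ result is designed precisely to avoid asymptotics; it uses direct inequalities valid for every $n\ge 5$, and the threshold $r\ge 10$ is what makes those explicit inequalities go through. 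Your sketch, as written, reproves only the large-$n$ case already covered (more sharply, down to $r\ge 4$) by Theorem~\ref{rgraph}.

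A secondary issue: in your dense regime you write ``use the hypergraph container method exactly as in the proof of Theorem~\ref{rcomplete}: essentially all Gallai $r$-colorings of such a $G$ use only two colours.'' Theorem~\ref{rcomplete} and its stability lemma (Theorem~\ref{2edge}) are stated for $K_n$, not for arbitrary dense $G$; the paper needs the separate Lemma~\ref{lemma: rdense} (for $r\ge 5$) and the stability Theorem~\ref{4graph: 2edge} (for $r=4$) to handle dense non-complete graphs, and even these require $e(G)\ge(1-\xi)\binom{n}{2}$ with $\xi$ small, not merely $e(G)\ge(\tfrac14+\varepsilon)n^2$. Your ``concentrated subcase'' cone-counting is also left as a promissory note rather than an argument.
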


\begin{thm}\cite{hoppen2017graphs}\label{5extremal}
For all $r\geq 5$, there exists $n_0$ such that for all $n>n_0$, the only Gallai $r$-extremal graph of order $n$ is the complete bipartite graph $K_{\lfloor n/2 \rfloor, \lceil n/2  \rceil}$. 
\end{thm}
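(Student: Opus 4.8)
The plan is to prove the following more precise statement: for $r\ge 5$ and $n$ large, every $n$-vertex graph $G$ has at most $r^{\lfloor n^2/4\rfloor}$ Gallai $r$-colorings, with equality only for $G=K_{\lfloor n/2\rfloor,\lceil n/2\rceil}$. Write $N(G)$ for the number of Gallai $r$-colorings of $G$. The graph $K_{\lfloor n/2\rfloor,\lceil n/2\rceil}$ is triangle-free, so every one of its $r$-colorings is Gallai and $N(K_{\lfloor n/2\rfloor,\lceil n/2\rceil})=r^{\lfloor n^2/4\rfloor}$; this is the benchmark. If $e(G)\le\lfloor n^2/4\rfloor$ then $N(G)\le r^{e(G)}\le r^{\lfloor n^2/4\rfloor}$, with equality only if $e(G)=\lfloor n^2/4\rfloor$ and no $r$-coloring of $G$ produces a rainbow triangle, i.e.\ only if $G$ is triangle-free (here $r\ge 3$ is used); by Mantel's theorem this forces $G=K_{\lfloor n/2\rfloor,\lceil n/2\rceil}$. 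Hence it remains to prove $N(G)<r^{\lfloor n^2/4\rfloor}$ \emph{strictly} whenever $e(G)>\lfloor n^2/4\rfloor$.

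I would split on how close $G$ is to being triangle-free, with a small parameter $\delta>0$ to be fixed last. First suppose $G$ has at least $\delta n^3$ triangles. Then a hypergraph-container argument in the spirit of the proof of Theorem~\ref{rcomplete} — applied to the coloring hypergraph on $E(G)\times[r]$ whose hyperedges are the rainbow triangles of $G$ — shows that all but a negligible proportion of Gallai $r$-colorings of $G$ use at most two colors, so that $N(G)\le\bigl(\binom r2+o(1)\bigr)2^{e(G)}\le 2^{(1/2+o(1))n^2}$. Since $r\ge 5$ we have $r^{\lfloor n^2/4\rfloor}=2^{((\log_2 r)/4)\,n^2}$ with $(\log_2 r)/4>\tfrac12$, so $N(G)$ is far below the benchmark. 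This is exactly where $r\ge 5$ enters: for $r=4$ these two growth rates coincide and one must compare lower-order terms, while for $r=3$ the dense graph $K_n$ actually exceeds the benchmark. Otherwise $G$ has fewer than $\delta n^3$ triangles, and by the triangle removal lemma together with the stability form of Mantel's theorem, $G$ can be turned into $K_{\lfloor n/2\rfloor,\lceil n/2\rceil}$ by adding and deleting at most $\beta n^2$ edges, where $\beta\to 0$ as $\delta\to 0$.

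So assume $G$ is $\beta n^2$-close to $K_{\lfloor n/2\rfloor,\lceil n/2\rceil}$. Fix a bipartition $V=A\sqcup B$ minimizing the number $b$ of edges inside $A$ and $B$; then $b\ge e(G)-|A||B|\ge e(G)-\lfloor n^2/4\rfloor=:t\ge 1$, and an imbalance $\bigl||A|-|B|\bigr|=2s$ only makes $b\ge t+s^2$ larger. Because $G$ is so close to complete bipartite, all but $o_\beta(n^2)$ of the inside edges $uv$ lie in a \emph{book} of size $(1-o_\beta(1))\tfrac n2$: that many triangles $uvw$ with $w$ in the opposite part adjacent to both endpoints. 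For an edge-disjoint family of books of sizes $k_1,\dots,k_m$ one gets, by restricting a Gallai coloring to the union of the books, the clean bound $N(G)\le r^{e(G)}\prod_i\bigl(\tfrac{3r-2}{r^2}\bigr)^{k_i}$; more generally a conditioning/entropy computation (exposing the colors on the edges meeting a book's spine first) shows that the Gallai constraints coming from the books through the $\ge t$ inside edges — even when these books overlap heavily, as in a ``star'' of inside edges at one vertex — cost a factor at least $c_r^{\,tn}$ for some constant $c_r=c_r(r)>1$. Therefore
\[
N(G)\;\le\;r^{e(G)}\,c_r^{-tn}\;=\;r^{\lfloor n^2/4\rfloor}\,\bigl(r\,c_r^{-n}\bigr)^{t}\;=\;o\bigl(r^{\lfloor n^2/4\rfloor}\bigr),
\]
since $r\,c_r^{-n}\to 0$ and $t\ge 1$. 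Thus $N(G)<r^{\lfloor n^2/4\rfloor}$ unless $t=0$, i.e.\ unless $e(G)=\lfloor n^2/4\rfloor$, which is the equality case already handled. Combining the regimes and choosing $\delta$ (hence $\beta$) small enough yields strict inequality for every $G\ne K_{\lfloor n/2\rfloor,\lceil n/2\rceil}$.

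The step I expect to be the main obstacle is the book accounting in the third paragraph. When the inside edges are spread out one can simply extract enough edge-disjoint near-books, but when they are concentrated (so that their books share spine or page edges) the naive product bound fails and one needs a genuine conditioning argument to extract the factor $c_r^{\,tn}$; one must also handle the case where the optimal bipartition omits many cross edges, so that some inside edges sit in shorter books, by trading missing cross edges against extra inside edges in the count, and ensure every estimate is uniform over the entire near-extremal family (which is what ultimately dictates how small $\delta$ must be in terms of $r$). None of this is delicate for $r\ge 5$, where the per-book loss $(r^2/(3r-2))^{n/2}$ dwarfs the per-edge gain $r$ for large $n$; the real subtlety surfaces only at $r=4$.
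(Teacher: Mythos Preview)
Your overall plan---split on whether $G$ has many triangles, use containers in the triangle-rich regime, use books in the near-bipartite regime---is natural, and your near-bipartite analysis is close in spirit to the paper's low-density case (Theorem~\ref{rgraph: low}, which also pivots on books and invokes the Bollob\'as--Nikiforov stability theorem). But your triangle-rich case has a real gap.

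You assert that when $G$ has at least $\delta n^3$ triangles, a container argument ``in the spirit of Theorem~\ref{rcomplete}'' yields $N(G)\le\bigl(\binom{r}{2}+o(1)\bigr)2^{e(G)}$, because almost all Gallai colorings use at most two colors. This is false as stated. Take $G$ to be two disjoint copies of $K_{n/2}$: it has $\Theta(n^3)$ triangles, but $N(G)\sim\binom{r}{2}^2 2^{e(G)}$, and the typical Gallai coloring uses two colors on each clique with \emph{different} color-pairs, hence four colors globally. More structurally, the stability step behind Theorem~\ref{rcomplete} (Theorem~\ref{2edge}) relies on every triple of vertices spanning a triangle, so that a container $P$ with $|\mathrm{Ga}(P,K_n)|$ large forces almost all palettes to equal one fixed pair $\{i,j\}$; for an arbitrary $G$ with $\delta n^3$ triangles, nothing stops large triangle-free bipartite pieces of $G$ from carrying palettes of size $r$, and the edge-product bound on $|\mathrm{Ga}(P,G)|$ does not collapse to $2^{e(G)}$. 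So the containers by themselves do not give you even the weaker $N(G)\le 2^{(1+o(1))e(G)}$ across this regime.

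The paper does not split on triangle count at all. It reproves the theorem via Theorem~\ref{rgraph}, which is in turn obtained from three edge-density ranges. For $r\ge 5$ and very dense $G$ (Section~6.1), each container is bounded directly by $r^{\frac12\binom{n}{2}}2^{-c\binom{n}{2}}$ via a careful triangle-class accounting (Lemma~\ref{lemma: rdense}); no ``two-color'' stability is invoked. For intermediate densities (Section~6.4), an iterative vertex-removal argument inside each container peels off low-degree vertices and typical $r$-edges until one lands in either the dense or the near-bipartite regime (Lemma~\ref{lemma: middle}). Your book argument is then essentially the near-bipartite regime, and your acknowledged ``book accounting'' obstacle is exactly what the chain of Lemmas~\ref{lower: l1}--\ref{lower: l4} handles. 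To fix your write-up, you would need to replace the single hand-wave in the triangle-rich case by something like the paper's density trichotomy, or at minimum supply a genuine bound of the form $N(G)\le r^{(1/4-c)n^2}$ valid for all $G$ with $e(G)$ bounded away from both $n^2/4$ and $\binom{n}{2}$.
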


For the cases $r\in \{3, 4\}$, several approximate results were given.
\begin{thm}\label{BHS}\cite{BHS}
There exists $n_0$ such that the following hold for all $n>n_0$.
\begin{enumerate}[label=(\roman*)]
\item For all $\delta>0$, if $G$ is a graph of order $n$, then the number of Gallai $3$-colorings of $G$ is at most $2^{(1 + \delta)n^2/2}.$
\item For all $\xi>0$, if $G$ is a graph of order $n$, and $e(G)\leq (1 - \xi)\binom{n}{2}$, then the number of Gallai $3$-colorings of $G$ is at most $2^{\binom{n}{2}}.$
\end{enumerate}
\end{thm}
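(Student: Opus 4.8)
The plan is to reduce both parts to a single container-type bound saying that an $n$-vertex graph $G$ has at most roughly $2^{e(G)}$ Gallai $3$-colorings — not much more than its $2^{e(G)}$ ordinary $2$-colorings.

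\emph{Reformulation.} Identify a $3$-coloring of $E(G)$ with its set of \emph{color-slots} $\{(e,c(e)):e\in E(G)\}\subseteq E(G)\times\{1,2,3\}$. Then a Gallai $3$-coloring is exactly a selection of one slot per edge whose slot-set avoids every \emph{rainbow triple} $\{(xy,\alpha),(xz,\beta),(yz,\gamma)\}$, taken over triangles $xyz$ of $G$ and bijections $\{xy,xz,yz\}\to\{1,2,3\}$. So Gallai $3$-colorings are precisely the ``transversal'' independent sets of the $3$-uniform hypergraph $\mathcal H_G$ on vertex set $E(G)\times\{1,2,3\}$ whose edges are these triples; $\mathcal H_G$ has $O(n^2)$ vertices and maximum degree $O(n)$, the regime in which the hypergraph container method applies. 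For $C\subseteq E(G)\times\{1,2,3\}$, write $d_C(e)=|\{c:(e,c)\in C\}|$ and $\mathrm{ex}(C)=|\{e:d_C(e)=3\}|$; the number of selections inside $C$ is at most $\prod_e d_C(e)\le 2^{e(G)}(3/2)^{\mathrm{ex}(C)}$.

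\emph{From containers to the count.} It therefore suffices to prove: for every $\varepsilon>0$ there is a family $\mathcal C$ of at most $2^{\varepsilon n^2}$ containers, each with $\mathrm{ex}(C)\le\varepsilon n^2$, such that every Gallai $3$-coloring lies in some $C\in\mathcal C$. Granting this, the number of Gallai $3$-colorings of $G$ is at most $2^{\varepsilon n^2}\cdot 2^{e(G)}(3/2)^{\varepsilon n^2}=2^{\,e(G)+O(\varepsilon n^2)}$. Part (i) then follows by taking $\varepsilon=\varepsilon(\delta)$ small, since $e(G)\le\binom n2$. Part (ii) follows by taking $\varepsilon=\varepsilon(\xi)$ small: then $e(G)+O(\varepsilon n^2)\le(1-\xi)\binom n2+O(\varepsilon n^2)\le\binom n2$ for all large $n$, because for a fixed $\xi$ the correction $O(\varepsilon n^2)$ is eventually below $\xi\binom n2$. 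Note also that when $e(G)\le\binom n2/\log_2 3$ the trivial bound $3^{e(G)}\le 2^{\binom n2}$ already suffices, so only the dense regime $e(G)>0.63\binom n2$ — where $G$ has $\Omega(n^3)$ triangles — actually needs the container step.

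\emph{The main obstacle.} The container theorem requires a balanced supersaturation input: a slot-set $S$ with $\mathrm{ex}(S)\ge\varepsilon n^2$ should span $\Omega_\varepsilon(n^3)$ rainbow triples with bounded co-degrees. This is \emph{false} verbatim — the excess can live entirely on a triangle-free subgraph $G'\subseteq G$ (e.g.\ $G=K_{0.8n}$ together with a vertex-disjoint dense bipartite graph), forcing no rainbow triangle at all; this is exactly why the $r=3$ case is only approximate. The repair is to run the argument relative to $G$'s triangle structure: split off the edges of $G$ lying in $o(n)$ triangles and absorb them into the trivial per-edge factor, then prove the balanced supersaturation for rainbow triples among the remaining, triangle-rich edges, where a genuine excess of available colors at each scale does force proportionally many rainbow triples. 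Carrying this out cleanly — equivalently, the regularity-method version used by Benevides, Hoppen and Sampaio, in which one regularizes $G$, observes that a Gallai coloring must essentially two-color the edges carried by triangle-rich regular triples, and bounds the residue — is the technical heart; everything else is the bookkeeping above.
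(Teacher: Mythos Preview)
The paper does not prove this theorem. Theorem~\ref{BHS} is quoted from Benevides, Hoppen and Sampaio~\cite{BHS} and used as a black box (together with Theorem~\ref{3graph} and the lower bound~(\ref{tri:lbound})) to conclude that $K_n$ is Gallai $3$-extremal. There is therefore no ``paper's own proof'' to compare your attempt against.

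As for the attempt itself: what you have written is a plan, not a proof. You correctly set up the container encoding, correctly identify that the only nontrivial ingredient would be showing that each container $C$ has $\mathrm{ex}(C)\le\varepsilon n^2$, and correctly observe that this fails as stated because the $3$-palette edges can sit on a triangle-free subgraph. But your proposed repair --- split off edges lying in $o(n)$ triangles, absorb them at cost $3$ per edge, and prove balanced supersaturation on the rest --- is neither carried out nor obviously sound. Even in the regime $e(G)>0.63\binom{n}{2}$ there can be $\Theta(n^2)$ edges lying in zero triangles (e.g.\ $K_{n/2,n/2}$ with a clique planted on part of one side), so ``absorb them into the trivial per-edge factor'' costs a genuine $3^{\Theta(n^2)}$; you would then need a quantitative trade-off between the number of triangle-poor edges and the gap $\binom{n}{2}-e(G)$, which you do not state, let alone prove. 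Your final sentence concedes exactly this: the ``technical heart'' is the regularity argument of \cite{BHS}, which you invoke rather than reproduce.

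It is worth noting that the container machinery developed in \emph{this} paper does not substitute for \cite{BHS} either. Proposition~\ref{less3color} (few edges with palette $\ge 3$) requires $P$ to be a Gallai template \emph{of $K_n$}, and its non-complete analogue, Lemma~\ref{small330}, only applies when $e(G)\ge(1-\xi)\binom{n}{2}$ for a small constant $\xi$. That is precisely the range covered by Theorem~\ref{3graph}; the complementary range $e(G)\le(1-\xi)\binom{n}{2}$ is exactly where the paper defers to Theorem~\ref{BHS}.
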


We remark that the part (i) of Theorem~\ref{BHS} was also proved in~\cite{hoppen2017graphs}, and the authors further provided an upper bound for $r=4$.

\begin{thm}\cite{hoppen2017graphs}
There exists $n_0$ such that the following hold for all $n>n_0$. For all $\delta>0$, if $G$ is a graph of order $n$, then the number of Gallai $4$-colorings of $G$ is at most $4^{(1 + \delta)n^2/4}.$
\end{thm}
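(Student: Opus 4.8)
The target bound $4^{(1+\delta)n^2/4}=2^{(1+\delta)n^2/2}$ is only a $2^{o(n^2)}$ factor larger than $2^{\binom n2}$, the number of $2$-colorings of $K_n$, so for dense $G$ one must genuinely beat the trivial count $4^{e(G)}$ of all $4$-colorings. I would start with the easy dichotomy. If $e(G)\le (1+\delta)n^2/4$ then even without the Gallai condition there are at most $4^{e(G)}\le 4^{(1+\delta)n^2/4}$ colorings, and we are done. So assume $e(G)>(1+\delta)n^2/4$; then $G$ has edge density at least $(1+\delta)/2>1/2$, so by a standard supersaturation argument $G$ contains $c(\delta)n^3$ triangles, and in particular $e(G)=\Theta(n^2)$.

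For such a $G$ the natural tool is the (multicolor) hypergraph container method. Form the $3$-uniform hypergraph $\mathcal H$ on the vertex set $E(G)\times\{1,2,3,4\}$ whose edges are the triples $\{(xy,i_1),(xz,i_2),(yz,i_3)\}$ over all triangles $xyz$ of $G$ and all choices of three distinct colors; then a $4$-coloring of $G$, regarded as a transversal selecting one color per edge, is Gallai exactly when it is independent in $\mathcal H$. One has $\Delta_1(\mathcal H)\le 6n$, $\Delta_2(\mathcal H)\le 2$ (two vertices lie in a common edge only when their underlying edges share a vertex, which forces the third edge and leaves just a color distinct from the two given ones), $\Delta_3(\mathcal H)\le 1$, and average degree $\Theta(t(G)/e(G))=\Theta(n)$; hence the container theorem yields a family of at most $2^{o(n^2)}$ \emph{container palettes} $S=(S_e)_{e\in E(G)}$, each spanning only $o(n^3)$ rainbow triangles of $G$, with every Gallai $4$-coloring of $G$ compatible with some $S$ in the family. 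It then suffices to show that each such palette admits at most $2^{(1+\delta')n^2/2}$ colorings, i.e.\ $\prod_{e}|S_e|\le 2^{(1+\delta')n^2/2}$, for some $\delta'<\delta$; multiplying by the $2^{o(n^2)}$ containers and taking $n$ large then gives the theorem.

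The crux --- and the step I expect to be the real obstacle --- is this per-palette estimate. A short case check shows that any triangle of $G$ with at least two edges in $F:=\{e:|S_e|\ge3\}$ spans at least two rainbow colorings, so $F$ has only $o(n^3)$ paths of length two whose closing pair is an edge of $G$. This, however, does not suffice on its own: for graphs of intermediate density (morally $K_{\lfloor n/2\rfloor,\lceil n/2\rceil}$ with a sparse graph glued inside a part) a palette can span few rainbow triangles while $\sum_e|S_e|$ is close to $4e(G)$, and the naive bound $\prod_e|S_e|\le 4^{|F|}2^{e(G)-|F|}$ is then far too weak. One therefore has to exploit the structural (stability) content behind the container output: coarsen the $2^{o(n^2)}$ palettes to a bounded list of templates and argue that a Gallai coloring consistent with a template that carries many triangles essentially refines a balanced bipartition of part of $V(G)$ --- on the complete bipartite portion the coloring is unconstrained (this produces the $4^{n^2/4}$), while the remaining $o(n^2)$ edges contribute only $4^{o(n^2)}$. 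Carrying out this stability analysis cleanly is where I expect most of the work to go; it is of the same flavor as --- and is subsumed by --- the stability arguments behind Theorem~\ref{rcomplete}, and it is precisely the point at which the $r=4$ bound is ``analogous'' to the $r=3$ bound of Theorem~\ref{BHS}.
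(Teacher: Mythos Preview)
This theorem is quoted from \cite{hoppen2017graphs} and is not proved in the present paper; there is no in-paper proof to compare against. The paper does, however, establish the much stronger Theorem~\ref{rgraph} (for $r\ge 4$, every $n$-vertex $G$ with $e(G)>\lfloor n^2/4\rfloor$ has fewer than $r^{\lfloor n^2/4\rfloor}$ Gallai $r$-colorings), which combined with the trivial bound $4^{e(G)}$ for $e(G)\le\lfloor n^2/4\rfloor$ immediately yields the cited statement.

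Your container framework and opening dichotomy are exactly the scaffolding the paper uses for Theorem~\ref{rgraph}. But the proposal is, by your own admission, incomplete at the per-container step, and your guess about what that step should say is off in a way that would send you down the wrong path. You write that a Gallai coloring compatible with a good template ``essentially refines a balanced bipartition of part of $V(G)$'', with the $4^{n^2/4}$ coming from an unconstrained $4$-coloring of a bipartite piece. For dense $G$ this is the wrong picture. The paper's stability in that regime (Theorem~\ref{4graph: 2edge}, via Lemmas~\ref{monoedge} and~\ref{4graph: baltri}) is a statement about the \emph{palette}, not about $V(G)$: any Gallai $4$-template $P$ of a near-complete $G$ with $|\mathrm{Ga}(P,G)|>2^{(1-\delta)\binom n2}$ has a \emph{single} two-element palette $\{i,j\}$ on almost every edge of $K_n$, and the $4^{n^2/4}=2^{n^2/2}$ then arises as $2^{\binom n2}$ from two-coloring an almost-complete graph (finished off by Theorem~\ref{2colorclass}), not from four-coloring a bipartite portion. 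The vertex-bipartition picture you describe only enters when $G$ is close to $K_{\lfloor n/2\rfloor,\lceil n/2\rceil}$ (Theorem~\ref{rgraph: low}, via book stability), and the intermediate-density range $(\tfrac14+\xi)n^2\le e(G)\le(\tfrac12-\xi)n^2$ needs yet a third, recursive argument (Theorem~\ref{rgraph: middle}). So the high-level plan is sound, but the concrete stability result you would have to prove for dense $G$ is not the one you sketched, and covering the full density range beyond your trivial case requires the three-way split carried out in Section~6 rather than a single template-to-bipartition step.
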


The above theorems show that for $r\in\{3, 4\}$, the complete graph $K_n$ is not far from being Gallai $r$-extremal, while for $r=4$, the complete bipartite graph $K_{\lfloor n/2 \rfloor, \lceil n/2  \rceil}$ is also close to be Gallai $r$-extremal.
Benevides, Hoppen and Sampaio~\cite{BHS} made the following conjecture.
\begin{conj}\label{3extremal}~\cite{BHS} 
The only Gallai $3$-extremal graph of order $n$ is the complete graph $K_n$. 
\end{conj}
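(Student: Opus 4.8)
The plan is to prove that, for every sufficiently large $n$, the complete graph $K_n$ is the unique Gallai $3$-extremal graph on $n$ vertices (Conjecture~\ref{3extremal}). Write $g_3(H)$ for the number of Gallai $3$-colorings of a graph $H$, and recall from (\ref{tri:lbound}) that $g_3(K_n)\ge(3+2^{-n})2^{\binom{n}{2}}$. Since $\binom{3}{2}=3$, it suffices to show $g_3(G)<g_3(K_n)$ for every $n$-vertex graph $G\ne K_n$. Fix a small constant $\xi>0$. If $e(G)\le(1-\xi)\binom{n}{2}$ then Theorem~\ref{BHS}(ii) gives $g_3(G)\le 2^{\binom{n}{2}}<g_3(K_n)$, so we may assume $e(G)=\binom{n}{2}-m$ with $1\le m\le\xi\binom{n}{2}$.

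For such $G$, the Gallai $3$-colorings using at most two colors number exactly $3\cdot 2^{e(G)}-3$, which is smaller than the corresponding quantity for $K_n$ by $3\cdot 2^{\binom{n}{2}}(1-2^{-m})\ge\tfrac32\,2^{\binom{n}{2}}$. Writing $N_3(H)$ for the number of Gallai $3$-colorings of $H$ that use all three colors, it therefore suffices to prove $N_3(G)<3\cdot 2^{\binom{n}{2}}(1-2^{-m})$; since deleting edges of $K_n$ removes triangle constraints only in a controlled way, one in fact expects the much stronger bound $N_3(G)\le 2^{\binom{n}{2}-m-\Omega(n/\log^2 n)}$ (for $K_n$ itself this is Theorem~\ref{rcomplete}, which gives $N_3(K_n)\le 2^{-n/(4\log^2 n)}2^{\binom{n}{2}}+6$), which is far more than enough.

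The core of the proof is this bound on $N_3(G)$, obtained by rerunning the hypergraph container and stability machinery behind Theorem~\ref{rcomplete} for a general dense host graph $G$. One works with the $3$-uniform hypergraph on $E(G)\times[3]$ whose edges are the rainbow triangles of $G$ and whose independent sets encode the Gallai $3$-colorings of $G$; the container theorem supplies a family of containers, each recorded by a list-assignment $L\colon E(G)\to 2^{[3]}$, that together capture all Gallai $3$-colorings of $G$, and a stability argument shows that any container holding more than $2^{\binom{n}{2}-\Omega(n/\log^2 n)}$ colorings must have a globally dominant pair of colors, say $\{1,2\}$, with $L(e)\subseteq\{1,2\}$ for all but $o(n^2)$ edges $e$. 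Inside such a container, a Gallai coloring that genuinely uses color $3$ places it on a set $S$ with $|S|=o(n^2)$ among the exceptional edges, and the absence of a rainbow triangle forces $c(uw)=c(vw)$ whenever $uv\in S$ and $w$ is a common neighbor with $uw,vw\notin S$. These identifications collapse $\Omega(|S|\,n)$ pairs of the remaining edges into common color classes, so the number of admissible $\{1,2\}$-colorings of those edges is at most $2^{\,e(G)-|S|-\Omega(|S|n)}$; summing $\binom{e(G)}{s}$ over $1\le |S|=s$ against this saving, and adding the already-small total contribution of the non-dominant containers, delivers the required bound on $N_3(G)$.

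I expect the main difficulty to be quantitative rather than structural: this estimate must be strong enough that the single factor of $2$ lost from one missing edge (the case $m=1$, that is $G=K_n-e$) is not recovered, which forces the saving coming from the forced color equalities to be $2^{-\Omega(n)}$ per used exceptional color and requires the relevant container family to stay sub-$2^{\binom{n}{2}}$ uniformly over all dense $G$. For the borderline case $G=K_n-e$ a direct computation is also available: with $e=xy$ and $W=V(K_n)\setminus\{x,y\}$, a Gallai $3$-coloring of $K_n-e$ is precisely a Gallai $3$-coloring $c'$ of $K_{n-2}$ on $W$ together with two independent extensions of $c'$ by a new vertex joined to all of $W$, which gives the identity $g_3(K_n-e)=\sum_{c'}N(c')^2$, where $N(c')$ denotes the number of Gallai $3$-colorings of $K_{n-1}$ that extend $c'$. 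Since $N(c')=(1+o(1))2^{n-2}$ for each of the $(3+o(1))2^{\binom{n-2}{2}}$ two-colored $c'$, while $N(c')\le n^{O(1)}2^{n-2}$ in general and only $2^{-\Omega(n/\log^2 n)}2^{\binom{n-2}{2}}$ of the colorings $c'$ use three colors, this sum equals $(\tfrac32+o(1))2^{\binom{n}{2}}<g_3(K_n)$.
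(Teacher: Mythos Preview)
Your plan follows the same route as the paper: Theorem~\ref{BHS}(ii) handles sparse $G$, and for dense $G$ you run the container--stability--enumeration pipeline that the paper packages as Theorem~\ref{3graph}, whose conclusion $g_3(G)\le 3\cdot 2^{e(G)}+2^{-n/(4\log^2 n)}2^{\binom{n}{2}}$ already beats $g_3(K_n)\ge(3+2^{-n})2^{\binom{n}{2}}$ the moment $e(G)\le\binom{n}{2}-1$. Two points on execution are worth flagging. First, the paper builds its containers once on $E(K_n)\times[3]$ and allows empty palettes on non-edges of $G$, rather than on $E(G)\times[3]$ as you propose; this is not merely cosmetic, because the stability step for non-complete $G$ (Lemma~\ref{3graph: baltri} and Theorem~\ref{3graph: 2edge}) counts over \emph{all} triangles of $K_n$ and has to control those passing through non-edges via the classes $\mathcal{T}_2,\mathcal{T}_3$ and Lemma~\ref{small330}. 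Second, your asserted saving of $2^{-\Omega(|S|\,n)}$ from the identifications $c(uw)=c(vw)$ is too optimistic in general: when the edges of $S$ share endpoints these constraints overlap heavily and one does not get $\Omega(|S|\,n)$ independent relations. The paper instead extracts a matching inside $S(F)$ and splits into cases by its size (Lemmas~\ref{lmatching} and~\ref{tmatching}); this case analysis is where the real work in Theorem~\ref{2colorclass} lies and where the $2^{-n/(3\log^2 n)}$ error term actually comes from. Finally, your separate treatment of $K_n-e$ is unnecessary once Theorem~\ref{3graph} is in hand, and the pointwise bound $N(c')\le n^{O(1)}2^{n-2}$ you invoke there would itself require proof.
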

For the case $r=4$, Hoppen, Lefmann and Odermann~\cite{hoppen2017graphs} believed that $K_{\lfloor n/2 \rfloor, \lceil n/2  \rceil}$ should be the extremal graph.
\begin{conj}\label{4extremal}~\cite{hoppen2017graphs}
The only Gallai $4$-extremal graph of order $n$ is the complete bipartite graph $K_{\lfloor n/2 \rfloor, \lceil n/2  \rceil}$.
\end{conj}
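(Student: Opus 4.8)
The plan is to prove the statement for all sufficiently large $n$. Write $N_r(H)$ for the number of Gallai $r$-colorings of a graph $H$, and put $K^\ast=K_{\lfloor n/2\rfloor,\lceil n/2\rceil}$. As $K^\ast$ is triangle-free, each of its $4^{\lfloor n^2/4\rfloor}$ colorings with at most $4$ colors is vacuously Gallai, so $N_4(K^\ast)=4^{\lfloor n^2/4\rfloor}$; the task is to show $N_4(G)<4^{\lfloor n^2/4\rfloor}$ for every other $n$-vertex graph $G$. If $e(G)\le\lfloor n^2/4\rfloor$ this is immediate, since $N_4(G)\le 4^{e(G)}\le 4^{\lfloor n^2/4\rfloor}$ and equality throughout would force $e(G)=\lfloor n^2/4\rfloor$ and $G$ triangle-free (some $4$-coloring rainbows a fixed triangle and so is not Gallai), whence $G=K^\ast$ by Mantel's theorem. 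Thus I may assume $e(G)\ge\lfloor n^2/4\rfloor+1$. The real difficulty is that $r=4$ is the borderline value, where $4^{n^2/4}$ and $\binom42 2^{\binom n2}$ agree in the leading exponent; the argument must therefore keep track of the second-order quantity $e(G)-\lfloor n^2/4\rfloor$ rather than absorb a $2^{o(n^2)}$ error at the end.

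For dense $G$ I would run the hypergraph container method, as in the proof of Theorem~\ref{rcomplete}, on the hypergraph with vertex set $E(G)\times[4]$ and edges the rainbow triangles of $G$. This gives $2^{o(n^2)}$ containers covering all Gallai $4$-colorings of $G$, in which any container whose support is $\Omega(n^2)$-far in edit distance from a complete bipartite graph admits at most $2^{n^2/2-\Omega(n^2)}$ colorings — such a support has $\Omega(n^3)$ triangles and hence $\Omega(n^2)$ edge-disjoint ones, each cutting the number of compatible colorings by a constant factor. Since $4^{\lfloor n^2/4\rfloor}\ge 2^{(n^2-1)/2}$, the assumption $N_4(G)\ge 4^{\lfloor n^2/4\rfloor}$ then yields a stability conclusion (via triangle supersaturation/stability applied to the dominant container): there is a maximum cut $V(G)=A\cup B$ of $G$ with $|A|,|B|=n/2\pm o(n)$ such that, writing $D_{\mathrm{in}}$ for the edges of $G$ inside $A$ or inside $B$ and $D_{\mathrm{out}}$ for the $A$--$B$ pairs missing from $G$, we have $|D_{\mathrm{in}}|+|D_{\mathrm{out}}|=o(n^2)$. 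If $D_{\mathrm{in}}=\emptyset$ then $G\subseteq K_{|A|,|B|}$, so $e(G)\le|A|\,|B|\le\lfloor n^2/4\rfloor$, contradicting $e(G)\ge\lfloor n^2/4\rfloor+1$; this also pins $\{|A|,|B|\}=\{\lfloor n/2\rfloor,\lceil n/2\rceil\}$ and $G=K^\ast$.

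So $D_{\mathrm{in}}\ne\emptyset$, which is the core case. Fix $uv\in D_{\mathrm{in}}$, say $u,v\in A$; it lies in a triangle $uvw$ for every $w$ in the common neighbourhood $W=N_G(u)\cap N_G(v)$, and the maximum-cut choice of $A\cup B$ together with $|D_{\mathrm{out}}|=o(n^2)$ lets one show $|W|=\Omega(n)$ for a positive proportion of the edges of $D_{\mathrm{in}}$. Colouring the $A$--$B$ edges first, for a uniformly random $4$-coloring $\phi$ of $G[A,B]$ the colour of $uv$ may be taken only from $\bigcap_{w\in W:\,\phi(uw)\ne\phi(vw)}\{\phi(uw),\phi(vw)\}$, an intersection of up to $|W|$ independent uniform $2$-subsets of $[4]$, which is nonempty with probability only $2^{-\Omega(|W|)}$; so on all but a $2^{-\Omega(|W|)}$-fraction of $A$--$B$ colorings there is no Gallai extension to $uv$ at all. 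Running this over a subfamily of $D_{\mathrm{in}}$ whose penalty events are (conditionally) independent — a matching of ``good'' edges, where the underlying edge-sets are disjoint, or all such edges at a single vertex, where they are conditionally independent given that vertex's cross-colours, the larger of which has size $=\Omega(\sqrt{|D_{\mathrm{in}}|})$ — gives $N_4(G)\le 4^{e(G)}\cdot 2^{-\Omega(n\sqrt{|D_{\mathrm{in}}|})}$. As $e(G)=|A|\,|B|-|D_{\mathrm{out}}|+|D_{\mathrm{in}}|\le\lfloor n^2/4\rfloor+|D_{\mathrm{in}}|$ and $|D_{\mathrm{in}}|=o(n^2)$,
\[
N_4(G)\ \le\ 4^{\lfloor n^2/4\rfloor}\cdot 2^{\,2|D_{\mathrm{in}}|-\Omega(n\sqrt{|D_{\mathrm{in}}|})}\ <\ 4^{\lfloor n^2/4\rfloor},
\]
the desired contradiction. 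The same scheme handles $r\ge5$ with plenty of room.

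The main obstacle is the quantitative calibration in the two middle steps. Since $r=4$ is exactly on the threshold, the container lemma cannot be used as a black box: one needs the stability refinement that forces $|D_{\mathrm{in}}|+|D_{\mathrm{out}}|$ down to $o(n^2)$, and — more delicately — the per-internal-edge penalty must be genuinely of order $2^{-\Theta(|W|)}$ and compound across $D_{\mathrm{in}}$ so as to beat the $4^{|D_{\mathrm{in}}|}$ gain from the extra edges uniformly over all $G$ with $e(G)>\lfloor n^2/4\rfloor$. Arranging the (conditional) independence of these penalty events simultaneously over all of $D_{\mathrm{in}}$ — controlling vertices of large $D$-degree, where the common neighbourhoods $W$ shrink, and internal edges clustered on few vertices, where matchings are small — is the fussiest point, and is precisely where a careful packing of edge-disjoint triangles inside the near-bipartite $G$ is needed.
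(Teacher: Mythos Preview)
The stability step is where your argument breaks down. You claim that containers plus the hypothesis $N_4(G)\ge 4^{\lfloor n^2/4\rfloor}$ force $G$ itself to be $o(n^2)$-close to a balanced complete bipartite graph, but the container method constrains the \emph{template} $P$, not the underlying graph: the relevant stability (Theorem~\ref{4graph: 2edge}) says that a dominant template has almost all palettes equal to a fixed pair $\{i,j\}$, which bounds $|\mathrm{Ga}(P,G)|$ by roughly $2^{e(G)}$ --- it says nothing about $G$ being near-bipartite. Your alternative justification, that many edge-disjoint triangles each cut the count by a constant factor, is quantitatively far too weak when $G$ is dense. If $G=K_n$ there are at most $\binom{n}{2}/3$ edge-disjoint triangles, each contributing a factor $40/64=5/8$ (a triangle has $64-24=40$ non-rainbow $4$-colorings), so this gives only
\[
4^{\binom{n}{2}}(5/8)^{\binom{n}{2}/3}\approx 3.42^{\binom{n}{2}}\approx 2^{0.88\,n^2},
\]
vastly larger than $4^{\lfloor n^2/4\rfloor}\approx 2^{n^2/2}$. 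The true count for $K_n$ is about $6\cdot 2^{\binom{n}{2}}$, which \emph{is} below $4^{\lfloor n^2/4\rfloor}$, but reaching that requires the template-level stability, not a triangle-packing bound on $G$.

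Accordingly, the paper does \emph{not} reduce everything to the near-bipartite case. It treats three density regimes separately: very dense $G$ (Theorem~\ref{rgraph: high}), where template stability together with Theorem~\ref{2colorclass} yields $N_4(G)\le 6\cdot 2^{e(G)}+o(4^{n^2/4})<4^{\lfloor n^2/4\rfloor}$; $e(G)$ just above $\lfloor n^2/4\rfloor$ (Theorem~\ref{rgraph: low}), where the Bollob\'as--Nikiforov book stability theorem is first used to show that $G$ is genuinely near-bipartite and then a book-packing argument (Lemmas~\ref{lower: l1}--\ref{lower: l4}), similar in spirit to your ``core case'', finishes; and the middle range (Theorem~\ref{rgraph: middle}), handled by a vertex-peeling argument that recurses into the other two regimes. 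Your near-bipartite analysis is on the right track and close to Lemma~\ref{lower: l1}, but the reduction to that situation is the heart of the problem and is missing from your proposal.
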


Using a similar technique as in Theorem~\ref{rcomplete}, we prove an analogous result for dense non-complete graphs when $r=3$.
\begin{thm}\label{3graph}
For $0<\xi \leq \frac{1}{64}$, there exists $n_0$ such that for all $n>n_0$ the following holds. If $G$ is a graph of order $n$, and $e(G)\geq (1-\xi)\binom{n}{2}$, then the number of Gallai 3-colorings of $G$ is at most 
\[
3\cdot 2^{e(G)} + 2^{-\frac{n}{4\log^2 n}}2^{\binom{n}{2}}.
\]
\end{thm}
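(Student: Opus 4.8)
\medskip
\noindent\textbf{Proof strategy.}
The plan is to run the argument behind Theorem~\ref{rcomplete} with the complete graph replaced by the dense host $G$, tracking the non-edges. Write $\bar e=\binom n2-e(G)\le\xi\binom n2\le\tfrac1{64}\binom n2$. The Gallai $3$-colorings of $G$ using at most two colors number at most $3\bigl(2^{e(G)}-2\bigr)+3\le 3\cdot 2^{e(G)}$, so it suffices to show that the number $N$ of Gallai $3$-colorings of $G$ that use all three colors is at most $2^{-n/(4\log^2 n)}2^{\binom n2}$. The structural feature of $G$ we lean on is that it is almost complete: every edge of $G$ lies in at least $(1-2\xi)n$ triangles of $G$ and $G$ has at least $(1-3\xi)\binom n3$ triangles, which supplies exactly the supersaturation needed to push the complete-graph machinery through.

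First, the container step, as in Theorem~\ref{rcomplete}: encode a Gallai $3$-coloring of $G$ as a transversal independent set of the $3$-uniform hypergraph on vertex set $\{(e,c):e\in E(G),\,c\in\{1,2,3\}\}$ whose edges are the realizable rainbow triangles of $G$, and apply the hypergraph container theorem (iterated, with the supersaturation above) to obtain a family $\mathcal C$ of list assignments $L\colon E(G)\to 2^{\{1,2,3\}}$ with $\log_2|\mathcal C|=o(n^2)$, such that every Gallai $3$-coloring of $G$ conforms to some $L\in\mathcal C$ and every $L\in\mathcal C$ realizes fewer than $\delta\binom n3$ rainbow triangles, where $\delta=\delta(\xi)>0$ is small.

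Next, the structural analysis of an individual container $L\in\mathcal C$. I would prove the dichotomy: either (a) $L$ is \emph{two-structured}, i.e.\ there is a colour pair $\{i,j\}$ and a vertex set $W$ with $|W|\le n/\log^2 n$ such that $L(e)\subseteq\{i,j\}$ for every edge $e$ of $G$ with no endpoint in $W$; or (b) the number of Gallai $3$-colorings conforming to $L$ is at most $2^{\binom n2-\Omega(n^2)}$. In case (a) one shows, using that the $\{i,j\}$-coloring of $G-W$ is generic on the linearly many vertices outside $W$ and that their neighbourhoods meet almost everything, that among the at most $3^{|W|\cdot n}\,2^{e(G-W)}$ conforming colorings all but a $2^{-\Omega(n^2)}$ proportion either use at most two colors or realize a rainbow triangle on $W$ together with two outside vertices; so the genuinely three-coloured conforming colorings of any two-structured $L$ again number at most $2^{\binom n2-\Omega(n^2)}$ (and for the three clean templates $L\equiv\{i,j\}$ there are none). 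Hence every $L\in\mathcal C$ contributes at most $2^{\binom n2-\Omega(n^2)}$ to $N$, and summing over $\mathcal C$ gives $N\le|\mathcal C|\cdot 2^{\binom n2-\Omega(n^2)}=2^{\binom n2-\Omega(n^2)}\le 2^{-n/(4\log^2 n)}2^{\binom n2}$, which together with the bound on the $\le 2$-colorings proves the theorem.

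The main obstacle is the dichotomy (a)/(b) and the refinement used in case (a). Over $K_n$ one understands a list assignment with few realizable rainbow triangles via Gallai's structure theorem, which is unavailable for the incomplete host $G$; one must instead use the near-completeness of $G$ to recover, off a set of only $O(n/\log^2 n)$ vertices, the ``two colours suffice'' behaviour — and, crucially, to keep that exceptional set this small, since otherwise the overcounting factor $3^{|W|n}$ in case (a) would overwhelm the target error $2^{-n/(4\log^2 n)}2^{\binom n2}$. Balancing the strength of this stability statement against the permissible size of the exceptional set is the delicate point; the container and supersaturation inputs carry over with essentially no change because $\xi$ is a small constant.
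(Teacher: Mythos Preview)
Your approach has a fatal quantitative error in case (a), and in fact your final bound $N\le 2^{\binom n2-\Omega(n^2)}$ on the number of Gallai $3$-colorings of $G$ using all three colors is simply false. Take $G=K_n$: as noted in the paper's introduction, the Gallai $3$-colorings in which exactly one edge $uv$ receives the third color already number $\binom n2\bigl(2^{\binom n2-(n-1)}-2\bigr)$, since one may color each pair $uw,vw$ with a common color from $\{i,j\}$ and color the remaining $\binom{n-2}{2}$ edges arbitrarily in $\{i,j\}$. Thus $N\ge 2^{\binom n2-\Theta(n)}$, which is vastly larger than your claimed $2^{\binom n2-\Omega(n^2)}$. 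The whole point of the error term $2^{-n/(4\log^2 n)}2^{\binom n2}$ in the theorem is precisely that the savings on genuinely three-colored Gallai colorings are only sublinear-times-$n$, not quadratic, so your case~(a) analysis cannot possibly close with an $\Omega(n^2)$ gain.

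Relatedly, the dichotomy you posit is too strong on both sides at once: to get $\Omega(n^2)$ savings in case~(b) you would need the container-stability threshold $\delta$ to be a constant, but then case~(a) only tells you that a constant fraction of edges (not a vertex set of size $n/\log^2 n$) carry the two-color palette, and your overcounting factor $3^{|W|n}$ becomes useless. The paper resolves this by taking $\delta=\log^{-11}n$ and proving an \emph{edge}-stability (Theorem~\ref{3graph: 2edge}): either $|\mathrm{Ga}(P,G)|\le 2^{(1-\delta)\binom n2}$, or all but $O(\delta)\binom n2$ edges of $K_n$ have palette $\{i,j\}$. The delicate counting in case~(a) is then done not by your crude $3^{|W|n}2^{e(G-W)}$ estimate but by Theorem~\ref{2colorclass}, whose proof (Lemmas~\ref{lmatching}--\ref{0matching}) analyzes the size of the maximum matching among third-color edges and extracts savings of order $n/\log^2 n$, exactly matching the error term. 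That matching-based case analysis is the missing idea in your sketch; without it one cannot beat the $2^{\binom n2-\Theta(n)}$ lower bound by the required polylogarithmic factor.
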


Together with Theorem~\ref{BHS} and the lower bound~(\ref{tri:lbound}), Theorem~\ref{3graph} solves Conjecture~\ref{3extremal} for sufficiently large $n$.
\begin{thm}
There exists $n_0$ such that for all $n>n_0$, among all graphs of order $n$, the complete graph $K_n$ is the unique Gallai $3$-extremal graph. 
\end{thm}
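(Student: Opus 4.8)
The plan is to reduce the statement to the three results already quoted, via a short case analysis on $e(G)$. Write $g(H)$ for the number of Gallai $3$-colorings of a graph $H$. Fix $\xi=\tfrac{1}{64}$ and take $n_0$ large enough that the conclusions of Theorem~\ref{BHS}(ii) and Theorem~\ref{3graph} (both applied with this $\xi$) hold and that the lower bound~(\ref{tri:lbound}) holds. Given an $n$-vertex graph $G\neq K_n$ with $n>n_0$, the goal is to show $g(G)<g(K_n)$; this simultaneously shows that $K_n$ is Gallai $3$-extremal and that it is the unique extremal graph of order $n$. Throughout, the benchmark is
\[
g(K_n)\ \ge\ (3+2^{-n})2^{\binom{n}{2}}\ >\ 3\cdot 2^{\binom{n}{2}},
\]
which is exactly~(\ref{tri:lbound}) with $r=3$ (and is consistent with the upper bound of Theorem~\ref{rcomplete}).

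First I would handle the sparse case $e(G)\le(1-\xi)\binom{n}{2}$: here Theorem~\ref{BHS}(ii) gives $g(G)\le 2^{\binom{n}{2}}<3\cdot 2^{\binom{n}{2}}\le g(K_n)$ immediately, so $G$ is not extremal. Second, in the dense case $e(G)\ge(1-\xi)\binom{n}{2}$, since $G\neq K_n$ we have $e(G)\le\binom{n}{2}-1$, hence $3\cdot 2^{e(G)}\le\tfrac32\cdot 2^{\binom{n}{2}}$, and Theorem~\ref{3graph} yields
\[
g(G)\ \le\ 3\cdot 2^{e(G)}+2^{-\frac{n}{4\log^2 n}}2^{\binom{n}{2}}\ \le\ \Bigl(\tfrac32+2^{-\frac{n}{4\log^2 n}}\Bigr)2^{\binom{n}{2}}\ \le\ \tfrac52\cdot 2^{\binom{n}{2}}\ <\ g(K_n),
\]
where we used $2^{-n/(4\log^2 n)}\le 1$. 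Combining the two cases gives $g(G)<g(K_n)$ for every $n$-vertex $G\neq K_n$ with $n>n_0$, which is the claim.

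I do not expect any genuine obstacle in this deduction: every inequality used is already available, and the only thing being checked is that the error term $2^{-n/(4\log^2 n)}$ coming from Theorems~\ref{rcomplete} and~\ref{3graph} is negligible against the constant-factor gap between the ``$\tfrac32$'' forced by a graph missing at least one edge and the ``$3$'' guaranteed for $K_n$ by~(\ref{tri:lbound}). All of the difficulty is already absorbed into Theorem~\ref{rcomplete} and Theorem~\ref{3graph}, whose proofs rely on the hypergraph container method together with stability; the present statement is essentially a one-line corollary assembling them with the trivial lower bound.
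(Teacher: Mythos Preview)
Your proposal is correct and follows exactly the approach the paper indicates: the paper states that the theorem follows by combining Theorem~\ref{BHS}(ii), the lower bound~(\ref{tri:lbound}), and Theorem~\ref{3graph}, and you have filled in precisely this case analysis on $e(G)$ with the obvious numerical checks. There is nothing to add.
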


Our third contribution is the following theorem.

\begin{thm}\label{rgraph}
For $n, r\in \mathbb{N}$ with $r\geq 4$, there exists $n_0$ such that for all $n>n_0$ the following holds. If $G$ is a graph of order $n$, and $e(G)>\lfloor n^2/4 \rfloor$, then the number of Gallai $r$-colorings of $G$ is less than $r^{\lfloor n^2/4 \rfloor}.$
\end{thm}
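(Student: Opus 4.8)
The plan is to reduce to $r=4$ and then split according to the edge density of $G$. For $r\ge5$ the statement is immediate from Theorem~\ref{5extremal}: for $n$ large, $K_{\lfloor n/2\rfloor,\lceil n/2\rceil}$ is the \emph{unique} Gallai $r$-extremal $n$-vertex graph, and since it is bipartite (so all $r^{\lfloor n^2/4\rfloor}$ of its edge-colourings are Gallai) and has exactly $\lfloor n^2/4\rfloor$ edges, any $G$ with $e(G)>\lfloor n^2/4\rfloor$ is a different graph and hence has strictly fewer than $r^{\lfloor n^2/4\rfloor}$ Gallai $r$-colourings. So assume $r=4$; the argument below in fact works uniformly for all $r\ge4$. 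Fix a sufficiently small constant $\xi>0$.

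\emph{Dense range: $e(G)\ge(1-\xi)\binom n2$.} Here I would invoke a counting bound for dense graphs analogous to Theorems~\ref{rcomplete} and~\ref{3graph}, proved by the same container-and-stability method, namely that the number of Gallai $r$-colourings of $G$ is at most $\binom r2\,2^{e(G)}+2^{-n/(4\log^2 n)}2^{\binom n2}\le\bigl(\binom r2+1\bigr)2^{\binom n2}$. Since $r\ge4$ we have $r^{\lfloor n^2/4\rfloor}\ge2^{2\lfloor n^2/4\rfloor}$ and $2\lfloor n^2/4\rfloor-\binom n2=\lfloor n/2\rfloor$, so $\bigl(\binom r2+1\bigr)2^{\binom n2}<r^{\lfloor n^2/4\rfloor}$ once $n$ is large.

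\emph{Intermediate range: $\lfloor n^2/4\rfloor<e(G)<(1-\xi)\binom n2$.} This is the substantive case. First a reduction: call an edge \emph{free} if it lies in no triangle of $G$, and let $G_0$, $G_1$ be the subgraphs of free and non-free edges. Every triangle of $G$ avoids free edges, so $G_0$ is triangle-free and $e(G_0)\le\lfloor n^2/4\rfloor$ by Mantel's theorem, every edge of $G_1$ lies in a triangle of $G_1$, and the number of Gallai $r$-colourings of $G$ equals $r^{e(G_0)}$ times that of $G_1$ since free edges may be coloured freely. Thus it suffices to show that $G_1$ has fewer than $r^{\lfloor n^2/4\rfloor-e(G_0)}$ Gallai $r$-colourings. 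I would prove this by running the hypergraph container method on $G_1$: let $\mathcal H$ be the $3$-uniform hypergraph on vertex set $E(G_1)\times[r]$ with a hyperedge $\{(e_1,i_1),(e_2,i_2),(e_3,i_3)\}$ for every triangle $e_1e_2e_3$ of $G_1$ and every choice of three distinct colours $i_1,i_2,i_3$, so that the Gallai $r$-colourings of $G_1$ are exactly the independent transversals of $\mathcal H$. Supplied with a balanced-supersaturation estimate, the container theorem produces a family $\mathcal C$ of list-assignments $L\colon E(G_1)\to2^{[r]}$ with $|\mathcal C|\le2^{o(n^2)}$ such that every Gallai $r$-colouring of $G_1$ is an $L$-colouring for some $L\in\mathcal C$, and every $L$ is close to a $2$-colouring: all but a small fraction of the edges have $|L(e)|\le2$, with the exceptional edges confined to a subgraph containing few triangles. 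Bounding the number of $L$-colourings by $\prod_e|L(e)|$, summing over $\mathcal C$, and combining with a Mantel-type stability analysis of the free/non-free split — the presence of non-free edges forces many formerly-free edges into $G_1$, so $\lfloor n^2/4\rfloor-e(G_0)$ is comparable to $e(G_1)$ rather than being tiny — yields the required bound with an exponential factor to spare, more than enough to absorb the $2^{o(n^2)}$ from the container count.

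The crux is this final synthesis. Unlike in the complete-graph case, the two extremal mechanisms — being essentially a $2$-colouring (worth about $2^{e(G)}$ colourings) and being an arbitrary colouring of a triangle-free skeleton (worth about $r^{\lfloor n^2/4\rfloor}$) — are of genuinely comparable magnitude when $r=4$, so a naive container bound overshoots $r^{\lfloor n^2/4\rfloor}$ as soon as $e(G)$ approaches $\binom n2$; one must therefore play the edge density of $G$ off against its triangle count. When $G$ is far from bipartite it carries enough rainbow-triangle constraints that the container step forces near-$2$-colourings directly, while when $G$ is close to bipartite the stability input is what guarantees that the non-free part $G_1$ is simultaneously small and triangle-rich enough that its Gallai colourings number far fewer than $r^{\lfloor n^2/4\rfloor-e(G_0)}$. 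Pinning the exponent constant down to exactly $1/4$, rather than $1/4+\varepsilon$ as in the earlier bounds of Hoppen, Lefmann and Odermann, is what makes the bookkeeping delicate.
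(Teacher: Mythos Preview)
Your reduction for $r\ge5$ via Theorem~\ref{5extremal} is valid and economical --- the paper instead gives an independent container proof for all $r\ge4$, so you are legitimately shortcutting. Your dense range is essentially what the paper does in Section~6.1--6.2.

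The intermediate range, however, is where the proposal is only a sketch, and the paper's argument is genuinely different from what you outline. Two specific issues:

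\emph{Low density (the hard end).} When $e(G)=\lfloor n^2/4\rfloor+m$ with $m$ small --- even $m=1$ --- you must save a factor of $r^m$ against the trivial bound. In your free/non-free split, $G_1$ is then a small graph (for $G=K_{\lfloor n/2\rfloor,\lceil n/2\rceil}+uv$ it is a book on $\Theta(n)$ edges with $\Theta(n)$ triangles), and the rainbow-triangle hypergraph on $E(G_1)\times[r]$ has \emph{bounded} average degree. In that regime the container theorem gives no useful saving: the overhead $|\mathcal C|$ is not beaten by the per-container gain. The paper does \emph{not} use containers here at all; it proves Theorem~\ref{rgraph: low} by direct book counting ($r(3r-2)^{q}$ colourings per book of size $q$), extracts edge-disjoint books via matchings, and when the booksize collapses invokes the Bollob\'as--Nikiforov stability theorem to exhibit an almost-balanced bipartition, then runs a delicate chain of partition lemmas (Lemmas~\ref{lower: l1}--\ref{lower: l4}) that iteratively prune low-degree vertices. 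None of this machinery is present or replaceable by your container sketch on $G_1$.

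\emph{Middle density.} Here the paper does use containers, but not on a free/non-free split: it runs a vertex-removal algorithm (removing low-degree vertices and endpoints of typical $r$-edges with large common neighbourhood), combined with the $t$-far-from-bipartite supersaturation theorem of Balogh et al.\ (Theorem~\ref{thm: tfar}), to reduce to either the high- or low-density cases already handled. Your ``balanced-supersaturation estimate'' gestures in this direction, but you would still need the low-density endpoint as a black box, and that is precisely the piece your proposal does not supply.

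In short: the free-edge decomposition is a nice observation, and your structural claim that non-free edges drag many crossing edges into $G_1$ is correct, but turning it into the exact bound $r^{\lfloor n^2/4\rfloor}$ near the Tur\'an threshold requires a non-container argument that you have not provided; the paper's book-plus-stability route appears to be essential there.
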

We remark that for a graph $G$ with $e(G)=\lfloor n^2/4 \rfloor$, which is not $K_{\lfloor n/2 \rfloor, \lceil n/2  \rceil}$, $G$ contains at least one triangle. Therefore, the number of Gallai $r$-colorings of $G$ is at most $r(r + 2(r-1))r^{e(G)-3}<r^{\lfloor n^2/4 \rfloor}.$
As a direct consequence of Theorem~\ref{rgraph} and the above remark, we reprove Theorem~\ref{5extremal}, and in particular, we show that Conjecture~\ref{4extremal} is true for sufficiently large $n$.
\begin{thm}
There exists $n_0$ such that for all $n>n_0$, among all graphs of order $n$, the complete bipartite graph $K_{\lfloor n/2 \rfloor, \lceil n/2  \rceil}$ is the unique Gallai $4$-extremal graph. 
\end{thm}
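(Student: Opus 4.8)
The plan is to derive the statement as an immediate consequence of Theorem~\ref{rgraph} with $r=4$ together with the remark following it, by a short case analysis on the number of edges of a competing graph. Set $t:=\lfloor n^2/4\rfloor$, the number of edges of $K_{\lfloor n/2\rfloor,\lceil n/2\rceil}$. The first observation is that $K_{\lfloor n/2\rfloor,\lceil n/2\rceil}$ is triangle-free, so \emph{every} $4$-coloring of its edges is a Gallai coloring; hence it admits exactly $4^{t}$ Gallai $4$-colorings. It then remains to show that, for all $n$ exceeding the threshold $n_0$ provided by Theorem~\ref{rgraph}, every $n$-vertex graph $G$ other than $K_{\lfloor n/2\rfloor,\lceil n/2\rceil}$ has strictly fewer than $4^{t}$ Gallai $4$-colorings.

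I would split into three cases. If $e(G)>t$, then Theorem~\ref{rgraph} with $r=4$ gives directly that $G$ has fewer than $4^{t}$ Gallai $4$-colorings. If $e(G)<t$, then even the total number of $4$-colorings of $E(G)$ is at most $4^{e(G)}\le 4^{t-1}<4^{t}$, so the bound holds trivially. Finally, if $e(G)=t$ but $G\neq K_{\lfloor n/2\rfloor,\lceil n/2\rceil}$, then by the uniqueness part of Mantel's theorem $G$ is not triangle-free; fixing a triangle $xyz$ of $G$, every Gallai $4$-coloring of $G$ induces a non-rainbow coloring of $\{xy,yz,zx\}$, and there are $4^{3}-4\cdot 3\cdot 2=40=4\bigl(4+2(4-1)\bigr)$ of those, while the remaining $t-3$ edges contribute a factor of at most $4$ each; thus $G$ has at most $40\cdot 4^{t-3}=\tfrac{5}{8}\,4^{t}<4^{t}$ Gallai $4$-colorings. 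Assembling the cases shows that $K_{\lfloor n/2\rfloor,\lceil n/2\rceil}$ is the unique maximizer.

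There is essentially no obstacle here beyond bookkeeping: all of the difficulty is concentrated in Theorem~\ref{rgraph}, which we are permitted to assume, and the only other external input is the uniqueness in Mantel's theorem (the complete bipartite graph $K_{\lfloor n/2\rfloor,\lceil n/2\rceil}$ is the unique triangle-free $n$-vertex graph with $\lfloor n^2/4\rfloor$ edges). The one point to be slightly careful about is that the three cases above are jointly exhaustive and that the final strict inequality survives in the boundary case $e(G)=t$, which it does because the loss factor $\tfrac{5}{8}$ coming from a single forbidden rainbow triangle already beats $1$.
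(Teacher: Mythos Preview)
Your proposal is correct and is essentially the same argument the paper gives: the paper derives this theorem ``as a direct consequence of Theorem~\ref{rgraph} and the above remark,'' where the remark is precisely your $e(G)=t$ case (a graph with $\lfloor n^2/4\rfloor$ edges other than $K_{\lfloor n/2\rfloor,\lceil n/2\rceil}$ contains a triangle, giving at most $r(r+2(r-1))r^{e(G)-3}<r^{\lfloor n^2/4\rfloor}$ Gallai $r$-colorings). You have merely spelled out the trivial $e(G)<t$ case and the lower bound for $K_{\lfloor n/2\rfloor,\lceil n/2\rceil}$ more explicitly than the paper does.
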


\subsection{Overview of the paper}
Combining Szemer\'{e}di's Regularity Lemma and the stability method was used at many earlier works on extremal problems, including Erd\H{o}s-Rothchild type problems, see, e.g.,~\cite{ABKS, BBS, BHS, hoppen2017graphs}. 
However, our main approach relies on the method of hypergragh containers, developed independently by Balogh, Morris and Samotij~\cite{BMS} as well as by Saxton and Thomason~\cite{ST}, and some stability results for containers, which may be of independent interest to readers.

The paper is organized as follows.
First, in Section 2, we introduce some important definitions and then state a container theorem which is applicable to colorings.
In Section 3, we present a key enumeration result on the number of colorings with special restrictions, which will be used repeatedly in the rest of the paper.
Then in Section~4, we study the stability behavior of the containers for the complete graph, and apply the multicolor container theorem to give an asymptotic upper bound for the number of Gallai $r$-colorings of the complete graph. 
In Section 5, we deal with the Gallai $3$-colorings of dense non-complete graphs; the idea is the same as in Section 4 except that we need to provide a new stability result which is applicable to non-complete graphs. 

In the second half of the paper, that is, in Section 6, we study the Gallai $r$-colorings of non-complete graphs for $r\geq 4$. When the underlying graph is very dense, that is, close to the complete graph, we apply the same strategy as in Section 4 for the case $r=4$, where we prove a proper stability result for containers. The case $r\geq 5$ is even simpler, in which we actually prove that the number of Gallai colorings in each container is small enough. 
When the underlying graph has edge density close to the $\frac14$, i.e.~the edge density of the extremal graph, some new ideas are needed, and we also adopt a result of Bollob{\'a}s and Nikiforov~\cite{BN} on book graphs. 
For the rest of the graphs whose edge densities are between $\frac14 + o(1)$ and $\frac12 - o(1)$, we use a supersaturation result of triangle-free graphs given by Balogh, Bushaw, Collares, Liu, Morris, and Sharifzadeh~\cite{balogh2017typical}, and the above results on Gallai $r$-colorings for both high density graphs and low density graphs.\\

For a positive integer $n$, we write $[n]=\{1, 2, \ldots, n\}$.
For a graph $G$ and a set $A\subseteq V(G)$, the \textit{induced subgraph} $G[A]$ is the subgraph of $G$ whose vertex set is $A$ and whose edge set consists of all of the edges with both endpoints in $A$. 
For two disjoint subsets $A, B\subseteq V(G)$, the \textit{induced bipartite subgraph} $G[A, B]$ is the subgraph of $G$ whose vertex set is $A\cup B$ and whose edge set consists of all of the edges with one endpoint in $A$ and the other endpoint in $B$. 
Denote by $\delta(G)$ the minimum degree of $G$, and $\Delta(G)$ the maximum degree of $G$. 
For a graph $G$ and a vertex $v\in V(G)$, let $N_G(v)$ be the \textit{neighborhood} of $v$, i.e.~the set of vertices adjacent to $v$ in $G$, and $d_G(v)=|N_G(v)|$ be the \textit{degree} of $v$.
For a set $A\subseteq V(G)$, the \textit{neighborhood of $v$ restricted to $A$} is the set $N_G(v, A)=N_G(v)\cap A$; the \textit{degree of $v$ restricted to $A$}, denoted by $d_G(v, A)$, is the size of $N_G(v, A)$. When the underlying graph is clear, we simply write $N(v)$, $d(v)$, $N(v, A)$, and $d(v, A)$ instead. 
Throughout the paper, we omit all floor and ceiling signs whenever these are not crucial. 
Unless explicitly stated, all $n$-vertex graphs are assumed to be defined on the vertex set $[n]$, and all logarithms have base 2.  
\section{Preliminaries}
\subsection{The hypergraph container theorem}
We use the following version of the hypergraph container theorem (Theorem 3.1 in \cite{BS}). Let $\mathcal{H}$ be a $k$-uniform hypergraph with average degree $d$. The \textit{co-degree} of a set of vertices $S\subseteq V(\mathcal{H})$ is the number of edges containing $S$; that is,
\begin{equation*}
d(S) = \{e\in E(\mathcal{H}) \mid S \subseteq e\}.
\end{equation*}
For every integer $2\leq j\leq k$, the $j$-th maximum co-degree of $\mathcal{H}$ is 
\begin{equation*}
\Delta_j(\mathcal{H}) = \max\{d(S) \mid S \subseteq V(\mathcal{H}),\ |S| = j\}.
\end{equation*}
When the underlying hypergraph is clear, we simply write it as $\Delta_j$.
For $0< \tau < 1$, the \textit{co-degree function} $\Delta(\mathcal{H}, \tau)$ is defined as
\begin{equation*}
\Delta(\mathcal{H}, \tau) = 2^{\binom{k}{2}-1}\sum_{j=2}^{k}2^{-\binom{j-1}{2}}\frac{\Delta_j}{d \tau^{j-1}}.
\end{equation*}
In particular, when $k=3$, 
\begin{equation*}
\Delta(\mathcal{H}, \tau) = \frac{4\Delta_2}{d \tau} + \frac{2\Delta_3}{d \tau^2}.
\end{equation*}

\begin{thm}\label{HCT}\cite{BS}
Let $\mathcal{H}$ be a $k$-uniform hypergraph on vertex set $[N]$.  Let $0< \varepsilon, \tau <1/2$. Suppose that $\tau  <1/(200 k!^2k)$ and $\Delta(\mathcal{H}, \tau) \leq \varepsilon/(12k!)$.  Then there exists $c=c(k) \leq 1000k!^3k$ and a collection of vertex subsets $\mathcal{C}$ such that 
\begin{enumerate}[label={\upshape(\roman*)}]
\item every independent set in $\mathcal{H}$ is a subset of some $A\in  \mathcal{C}$;
\item for every $A\in \mathcal{C}$, $e(\mathcal{H}[A])\leq \varepsilon\cdot e(\mathcal{H})$;
\item $\log|\mathcal{C}| \leq cN\tau \log(1/\varepsilon) \log(1/\tau)$.
\end{enumerate} 
\end{thm}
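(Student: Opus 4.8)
The plan is to derive Theorem~\ref{HCT} from a \emph{one-step} container lemma applied iteratively. The one-step lemma asserts that for any $k$-uniform hypergraph $\mathcal{G}$ on $[N]$ whose co-degrees obey the bound forcing $\Delta(\mathcal{G},\tau)$ to be small, there is a function $A(\cdot)$ from subsets of $[N]$ of size at most $N\tau$ to subsets of $[N]$, together with a rule assigning to each independent set $I$ a \emph{fingerprint} $S=S(I)\subseteq I$ with $|S|\le N\tau$, such that $I\subseteq A(S(I))$ and the container $A(S(I))$ either already has at most $\varepsilon\cdot e(\mathcal{G})$ edges or has at most $\tfrac12 e(\mathcal{G})$ edges. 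Granting this, I start from $\mathcal{H}$ and apply the lemma; if the container $A_1$ still has more than $\varepsilon\cdot e(\mathcal{H})$ edges, I apply the lemma to $\mathcal{H}[A_1]$, and iterate. Since a non-terminal step halves the edge count, after $t\le\log(1/\varepsilon)+1$ steps the container $A_t$ has $e(\mathcal{H}[A_t])\le\varepsilon\cdot e(\mathcal{H})$, and $I\subseteq A_t$ throughout. Taking $\mathcal{C}$ to be all containers producible this way, each is determined by the sequence $(S_1,\dots,S_t)$ of at most $\log(1/\varepsilon)+1$ subsets of $[N]$ of size at most $N\tau$, so $|\mathcal{C}|\le\big(\binom{N}{\le N\tau}\big)^{\log(1/\varepsilon)+1}$; since $\log\binom{N}{\le N\tau}=O\big(N\tau\log(1/\tau)\big)$ once $\tau$ is small, this yields $\log|\mathcal{C}|\le cN\tau\log(1/\varepsilon)\log(1/\tau)$ for a suitable $c=c(k)$, which is (iii), and (i)--(ii) are built into the construction.

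For the one-step lemma I would run the Balogh--Morris--Samotij online max-degree algorithm. Given $I$, initialize $A:=[N]$ and $S:=\emptyset$, and repeat until $e(\mathcal{G}[A])\le\varepsilon\cdot e(\mathcal{G})$ or $|S|=N\tau$: examine the vertex $u$ of largest degree in $\mathcal{G}[A]$, with ties broken by a fixed order on $[N]$; if $u\in I$, add $u$ to $S$ and delete from $A$ every vertex $v$ for which some $T\subseteq S$ makes the co-degree of $T\cup\{v\}$ in $\mathcal{G}[A]$ at least $\tau^{|T|}d$, where $d$ is the average degree of $\mathcal{G}$ (this threshold, summed appropriately over the sizes $|T|$, is what consumes the co-degree bounds); if $u\notin I$, delete $u$ from $A$. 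The crucial point is that this run can be replayed from $S$ alone: executing the same steps with ``$u\in I$'' replaced by ``$u\in S$'' deletes the same vertices in the same order, by induction, since each step's action depends only on $A$ and on membership in $S$, which then determine the next examined vertex; hence the final set is a function $A(S)$ of $S$. Moreover $I\subseteq A(S)$, because a vertex of $I$ can be deleted only as the examined ``$u\in I$'' vertex, which is impossible, or for having large co-degree with a subset of $S\subseteq I$; but $I$ is independent, so with the thresholds $\tau^{|T|}d$ correctly calibrated no $I$-vertex is deleted for that reason.

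It remains to check that when the algorithm stops with $|S|=N\tau$, the edge count has at least halved. This is where the maximum-degree choice matters: the examined $u\in I$ has degree in $\mathcal{G}[A]$ at least the current average, so the deletion it triggers destroys many edges, and $\Delta(\mathcal{G},\tau)$ --- with the weights $2^{\binom{k}{2}-1}2^{-\binom{j-1}{2}}$ accounting for the passage between shadows of different sizes --- is precisely the bookkeeping that bounds how many edges can \emph{survive} $N\tau$ such deletions; the hypotheses $\tau<1/(200k!^2k)$ and $\Delta(\mathcal{H},\tau)\le\varepsilon/(12k!)$ are chosen so that this surviving count is at most $\tfrac12 e(\mathcal{G})$. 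One must also verify that the co-degree bounds are inherited, up to the same constant factors, by each $\mathcal{H}[A_i]$ arising in the iteration, so that the one-step lemma may legitimately be reapplied with the same $\tau$.

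The main obstacle is the bookkeeping inside the one-step lemma when $k\ge3$: since knowing that a single vertex lies in $I$ forbids nothing by itself, the deletion rule must act on shadows of all sizes $2,\dots,k$ at once, and one must simultaneously delete enough per fingerprint vertex to force the factor-$\tfrac12$ drop within $N\tau$ steps, yet not so much that any $I$-vertex disappears; reconciling these via the scale-dependent thresholds $\tau^{|T|}d$ and the weighted co-degree function, and then pinning the constants down to the stated hypotheses, is the bulk of the work. The iteration and the counting delivering the three conclusions are afterwards routine.
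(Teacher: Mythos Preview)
The paper does not prove Theorem~\ref{HCT} at all: it is quoted verbatim from~\cite{BS} and used as a black box, with the proof deferred entirely to that reference (and ultimately to the original container papers~\cite{BMS,ST}). So there is no ``paper's own proof'' to compare against.

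Your outline is a faithful high-level sketch of the standard Balogh--Morris--Samotij argument: a one-step container lemma built from the online max-degree algorithm, followed by iteration until the edge count drops below $\varepsilon\cdot e(\mathcal{H})$, with the fingerprint sequence bounding $|\mathcal{C}|$. The architecture is right, and the replay-from-$S$ determinism and the role of the weighted co-degree function are correctly identified. Two caveats. First, the sentence ``a vertex of $I$ can be deleted only as the examined $u\in I$ vertex, which is impossible'' is garbled: when $u\in I$ you \emph{do} add $u$ to $S$ and then delete $u$ from $A$ (that is the whole point), so you need to be careful that $I\subseteq S\cup A$ at termination rather than $I\subseteq A$; the actual container is $S\cup A(S)$, not $A(S)$ alone. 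Second, your inheritance claim --- that $\Delta(\mathcal{H}[A_i],\tau)$ stays small so the one-step lemma reapplies --- is not automatic: $\Delta_j$ can only decrease under restriction, but the \emph{average degree} $d$ in the denominator of $\Delta(\cdot,\tau)$ also drops, so the ratio need not be controlled. The standard fix is to keep the thresholds tied to the \emph{original} $d$ throughout the iteration, or to halt once the edge count is small enough that the lemma is no longer needed; either way this step needs a sentence of justification. Beyond those two points, pinning down the explicit constants $200k!^2k$, $12k!$, and $1000k!^3k$ requires the careful bookkeeping you allude to but do not carry out.
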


\subsection{Definitions and multi-color container theorem}
A key tool in applying container theory to multi-colored structures will be the notion of a \textit{template}. This notion of `template', which was first introduced in~\cite{FOSU}, goes back to \cite{ST} under the name of `2-colored multigraphs' and later to \cite{BW}, where it is simply called `containers'.  For more studies about the multi-color container theory, we refer the interested reader to \cite{BMS, BMS18, BW, FOSU, ST}.

\begin{defi}[Template and palette]
An \textit{$r$-template} of order $n$ is a function $P : E(K_n) \rightarrow 2^{[r]}$, 
associating to each edge $e$ of $K_n$ a list of colors $P(e) \subseteq [r]$; we refer to this set $P(e)$ as the \textit{palette} available at $e$.
\end{defi}  

\begin{defi}[Subtemplate]
Let $P_1$, $P_2$ be two $r$-templates of order $n$. We say that $P_1$ is a \textit{subtemplate} of $P_2$ $($written as $P_1\subseteq P_2)$ if $P_1(e)\subseteq P_2(e)$ for every edge $e\in E(K_n)$.
\end{defi}

We observe that for $G\subseteq K_n$, an $r$-coloring of $G$ can be considered as an $r$-template of order $n$, with only one color allowed at each edge of $G$ and no color allowed at each non-edge. 
For an $r$-template $P$, write $\mathrm{RT}(P)$ for the number of subtemplates of $P$ that are rainbow triangles. We say that $P$ is \textit{rainbow triangle-free} if $\mathrm{RT}(P)=0$.
Using the container method, Theorem~\ref{HCT}, we obtain the following.

\begin{thm}\label{container}
For every $r\geq 3$, there exists a constant $c=c(r)$ and a collection $\mathcal{C}$ of $r$-templates of order $n$ such that
\begin{enumerate}[label={\upshape(\roman*)}]
\item every rainbow triangle-free $r$-template of order $n$ is a subtemplate of some $P\in  \mathcal{C}$;
\item for every $P\in \mathcal{C}$, $\mathrm{RT}(P)\leq n^{-1/3}\binom{n}{3}$;
\item $|\mathcal{C}| \leq 2^{cn^{-1/3}\log^2 n\binom{n}{2}}$.
\end{enumerate} 
\end{thm}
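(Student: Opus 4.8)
The plan is to apply the hypergraph container theorem (Theorem~\ref{HCT}) to the $3$-uniform hypergraph $\mathcal{H}$ whose vertices are the pairs $(e,c)$ with $e \in E(K_n)$ and $c \in [r]$ (so $N = r\binom{n}{2}$), and whose edges are the triples $\{(e_1,c_1),(e_2,c_2),(e_3,c_3)\}$ such that $e_1,e_2,e_3$ form a triangle in $K_n$ and $c_1,c_2,c_3$ are three distinct colors. An $r$-template $P$ may be identified with the vertex subset $\{(e,c): c \in P(e)\} \subseteq V(\mathcal{H})$, and under this identification $P$ is rainbow triangle-free precisely when the corresponding set is independent in $\mathcal{H}$, while $\mathrm{RT}(P) = e(\mathcal{H}[P])$. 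So conclusion (i) of Theorem~\ref{HCT} gives conclusion (i) here (after noting the collection $\mathcal{C}$ of vertex subsets can be read back as templates), conclusion (ii) of Theorem~\ref{HCT} with an appropriate $\varepsilon$ gives (ii) here once we check $e(\mathcal{H}) = \Theta_r(n^3)$, and (iii) translates directly.

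The bulk of the work is the routine but essential verification of the hypotheses of Theorem~\ref{HCT} with the right choice of parameters. First I would compute $e(\mathcal{H}) = \binom{n}{3}\cdot 6\binom{r}{3} = \Theta_r(n^3)$ and hence the average degree $d = 3e(\mathcal{H})/N = \Theta_r(n)$. Next I would bound the co-degrees: $\Delta_2$ is the maximum, over pairs $\{(e_1,c_1),(e_2,c_2)\}$, of the number of triangles completing them with a third colored edge; if $e_1,e_2$ share a vertex this is $O_r(n)$ (choose the third vertex, the third color is then essentially determined up to $O(1)$ choices), and if they do not share a vertex or $c_1=c_2$ it is $0$, so $\Delta_2 = O_r(n)$. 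Similarly $\Delta_3 = O_r(1)$, since a triple of colored edges lies in at most a bounded number of hyperedges. Then I would set $\tau = n^{-1/3}$ and $\varepsilon = n^{-1/3}$ (or a constant times these), and check that $\Delta(\mathcal{H},\tau) = \dfrac{4\Delta_2}{d\tau} + \dfrac{2\Delta_3}{d\tau^2} = O_r\!\left(\dfrac{1}{\tau} + \dfrac{1}{n\tau^2}\right) = O_r(n^{1/3}) \cdot \dfrac{1}{n^{1/3}}$... more carefully, $\Delta_2/(d\tau) = O_r(1/\tau) = O_r(n^{1/3})$, which is \emph{too large}; so in fact one must take $\tau$ much smaller — something like $\tau = n^{-2/3}$ or include logarithmic corrections — and then re-examine which conclusion controls the exponent. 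The honest statement is that one picks $\tau$ as small as the desired container bound (iii) permits, namely so that $cN\tau\log(1/\varepsilon)\log(1/\tau) = O(n^{-1/3}\log^2 n \binom{n}{2})$, forcing $\tau = \Theta(n^{-1/3} \cdot n^{-1}\cdot\text{polylog})$ up to the precise bookkeeping, and then verifies $\Delta(\mathcal{H},\tau) \le \varepsilon/(12\cdot 3!)$ with $\varepsilon = n^{-1/3}$ holds comfortably because $\Delta_2/(d\tau) = O_r(1/(n\tau)) \cdot O_r(1) $ and $\tau^{-1} = O(n\,\text{polylog})$ makes this $O_r(\text{polylog}/n)$, which is $\le \varepsilon/36$ for large $n$. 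One also checks the trivial side condition $\tau < 1/(200\cdot(3!)^2\cdot 3)$.

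The main obstacle — and the step requiring genuine care rather than mechanical computation — is the simultaneous balancing of $\tau$ and $\varepsilon$: we need $\varepsilon = n^{-1/3}$ to get conclusion (ii) in the stated form $\mathrm{RT}(P) \le n^{-1/3}\binom{n}{3}$ (using $e(\mathcal{H}) \le C_r \binom{n}{3}$ and absorbing $C_r$ into the $n^{-1/3}$ for large $n$, or stating (ii) with $\varepsilon \cdot e(\mathcal H)$ and then translating), we need $\tau$ small enough that conclusion (iii) reads $2^{O(n^{-1/3}\log^2 n\binom{n}{2})}$, and we need both the co-degree hypothesis $\Delta(\mathcal{H},\tau)\le \varepsilon/(12k!)$ and $\tau < 1/(200k!^2k)$ to survive these choices. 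Since $d = \Theta_r(n)$, $\Delta_2 = O_r(n)$, $\Delta_3 = O_r(1)$, the co-degree function behaves like $O_r(1/\tau) + O_r(1/(n\tau^2))$, so the constraint is essentially $1/\tau \ll \varepsilon n = n^{2/3}$, i.e. $\tau \gg n^{-2/3}$, which is compatible with the container-count requirement $\tau \approx n^{-1/3}/n = n^{-4/3}\cdot\text{polylog}$ only if we are slightly more generous — so the cleanest route is to take $\tau = n^{-1/3}$, accept the container bound $2^{O(n^{-1/3}\log^2 n\, \binom{n}{2})}$ directly from (iii) (which is exactly what is claimed), and verify $\Delta(\mathcal{H},n^{-1/3}) = O_r(n^{1/3}) \le \varepsilon/(12k!)$ fails unless $\varepsilon$ is also $\Theta_r(n^{1/3})$ — which it cannot be. Resolving this tension is precisely why the co-degree estimates must be done carefully: in fact $\Delta_2 = O_r(n)$ but $d = \Theta_r(n)$ as well, so $\Delta_2/(d\tau)$ is $O_r(1/\tau)$, and one genuinely needs $\varepsilon/\tau$-type room. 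The resolution used in the literature is to apply the container theorem \emph{iteratively} (the ``container process'') or to choose $\tau$ at the threshold $\tau = c_r n^{-1/3}$ with a small enough constant and $\varepsilon$ a constant, then iterate $O(\log n)$ times to push $e(\mathcal{H}[P])$ down to $n^{-1/3}e(\mathcal{H})$ while the container count multiplies by $2^{O(n^{-1/3}\log^2 n \binom{n}{2})}$ each round, the $\log^2 n$ absorbing the $O(\log n)$ iterations. I would therefore structure the proof as: (1) set up $\mathcal{H}$ and the correspondence; (2) estimate $e(\mathcal{H})$, $d$, $\Delta_2$, $\Delta_3$; (3) fix $\varepsilon_0$ a small constant, $\tau = c_r n^{-1/3}$, verify the hypotheses of Theorem~\ref{HCT}; (4) iterate $O(\log n)$ times, at each step replacing each container $P$ by the subcontainers obtained from applying Theorem~\ref{HCT} to $\mathcal{H}[P]$, until $\mathrm{RT}(P) \le n^{-1/3}\binom{n}{3}$; (5) collect the bounds on $|\mathcal{C}|$ and translate back to templates.
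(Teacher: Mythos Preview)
Your setup is exactly right: the hypergraph $\mathcal{H}$ on vertex set $E(K_n)\times[r]$ with hyperedges the rainbow triangles, the identification of templates with vertex subsets, and the translation of (i)--(iii) are precisely what the paper does. The gap is a single miscomputation that sends you down an unnecessary detour.

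You claim $\Delta_2 = O_r(n)$, reasoning that for a pair $\{(e_1,c_1),(e_2,c_2)\}$ with $e_1,e_2$ sharing a vertex one ``chooses the third vertex.'' But once two edges of $K_n$ sharing a vertex are fixed, the triangle---and hence the third edge $e_3$---is \emph{uniquely determined}; the only freedom is the colour $c_3\in[r]\setminus\{c_1,c_2\}$. Thus $\Delta_2 = r-2$, a constant independent of $n$. (Likewise $\Delta_3=1$.) With $d=(r-1)(r-2)(n-2)$, the co-degree function at $\tau = C_r\,n^{-1/3}$ is
\[
\Delta(\mathcal{H},\tau)=\frac{4(r-2)}{d\tau}+\frac{2}{d\tau^2}=O_r\!\bigl(n^{-2/3}\bigr)+O_r\!\bigl(n^{-1/3}\bigr),
\]
which is $\le \varepsilon/(12\cdot 3!)$ for $\varepsilon = c_r\,n^{-1/3}$ once $C_r$ is chosen large enough. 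A \emph{single} application of Theorem~\ref{HCT} then yields all three conclusions directly---no iteration is needed. The paper takes exactly this route with $\varepsilon=n^{-1/3}/\bigl(r(r-1)(r-2)\bigr)$ and $\tau=\sqrt{72\cdot 3!\cdot r}\,n^{-1/3}$.

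Your proposed iterated container argument would also work and is a standard fallback when $\Delta_2/d$ is not small, but here it is unnecessary machinery introduced solely to compensate for the overestimate of $\Delta_2$. Fix that one line and the proof collapses to a paragraph.
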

\begin{proof}
Let $\mathcal{H}$ be a 3-uniform hypergraph with vertex set $E(K_n)\times\{1, 2, \ldots, r\}$, whose edges are all triples $\{(e_1, d_1), (e_2, d_2), (e_3, d_3)\}$ such that $e_1, e_2, e_3$ form a triangle in $K_n$ and $d_1, d_2, d_3$ are all different. In other words, every hyperedge in $\mathcal{H}$ corresponds to a rainbow triangle of $K_n$. 
Note that there are exactly $r(r-1)(r-2)$ ways to rainbow color a triangle with $r$ colors. Hence, the average degree $d$ of $\mathcal{H}$ is equal to
$$d=\frac{3e(\mathcal{H})}{v(\mathcal{H})}=\frac{3r(r-1)(r-2)\binom{n}{3}}{r\binom{n}{2}}=(r-1)(r-2)(n-2).$$
For the application of Theorem~\ref{HCT}, let $\varepsilon = n^{-1/3}/r(r-1)(r-2)$ and $\tau = \sqrt{72\cdot3!\cdot r}n^{-1/3}$. 
Observe that $\Delta_2(\mathcal{H}) = r-2$, and $\Delta_3(\mathcal{H}) = 1$. For $n$ sufficiently large, we have
$\tau\leq 1/(200\cdot 3!^2\cdot 3)$ and 
$$\Delta(\mathcal{H}, \tau)= \frac{4(r-2)}{d \tau} + \frac{2}{d \tau^2}\leq \frac{3}{d \tau^2}\leq \frac{\varepsilon}{12\cdot 3!}.$$
Hence, there is a collection $\mathcal{C}$ of vertex subsets satisfying properties (i)-(iii) of Theorem~\ref{HCT}.
Observe that every vertex subset of $\mathcal{H}$ corresponds to an $r$-template of order $n$; every rainbow triangle-free $r$-template of order $n$ corresponds to an independent set in $\mathcal{H}$.
Therefore, $\mathcal{C}$ is a desired collection of $r$-templates.
\end{proof}

\begin{defi}[Gallai $r$-template]
For a graph $G$ of order $n$, an $r$-template $P$ of order $n$ is a \textit{Gallai $r$-template} of $G$ if it satisfies the following properties:
\begin{enumerate}[label={\upshape(\roman*)}]
\item for every $e\in E(G)$, $|P(e)|\geq 1$;
\item $\mathrm{RT}(P)\leq n^{-1/3}\binom{n}{3}.$
\end{enumerate}
\end{defi}

For a graph $G$ of order $n$ and a collection $\mathcal{P}$ of $r$-templates of order $n$, denote by $\mathrm{Ga}(\mathcal{P}, G)$ the set of Gallai $r$-colorings of $G$ which is a subtemplate of some $P\in \mathcal{P}$. 
If $\mathcal{P}$ consists of a single template $P$, then we simply write it as $\mathrm{Ga}(P, G)$.
\subsection{A technical lemma}
In this section, we provide a lemma that will be useful to us in what follows. We use a special case of the weak Kruskal-Katona theorem due to Lov\`{a}sz's~\cite{L}.
\begin{thm}[Lov\`{a}sz~\cite{L}]\label{trinum}
Suppose $G$ is a graph with $\binom{x}{2}$ edges, for some real number $x\geq 2$. Then the number of triangles of $G$ is at most $\binom{x}{3}$, with equality if and only if $x$ is an integer and $G=K_x$.
\end{thm}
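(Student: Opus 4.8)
The plan is to prove the bound by induction on $n=|V(G)|$, taking triangle-free graphs as the base case (there the claim is just $\binom{x}{3}\ge 0$, which holds since $x\ge 2$). For the inductive step let $v$ be a vertex of minimum degree, put $\delta:=d(v)$, and write $t(G)$ for the number of triangles of $G$. Every triangle either contains $v$---and triangles through $v$ are in bijection with the edges of $G[N(v)]$, of which there are at most $\binom{\delta}{2}$---or avoids $v$, i.e.\ lies in $G-v$. Hence $t(G)\le \binom{\delta}{2}+t(G-v)$, where $G-v$ has exactly $\binom{x}{2}-\delta$ edges.

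The first key point is that $\delta\le x-1$: since $\binom{x}{2}=e(G)\le\binom{n}{2}$ and $\binom{\,\cdot\,}{2}$ is increasing on $[1,\infty)$, we get $n\ge x$, so $\delta\le 2e(G)/n=x(x-1)/n\le x-1$. Thus $\binom{x}{2}-\delta\ge\binom{x-1}{2}\ge 0$; when this number is at least $1$ we write it as $\binom{x'}{2}$ for the unique real $x'\in[1,x]$ and apply the induction hypothesis to get $t(G-v)\le\binom{x'}{3}$ (and if $\binom{x}{2}-\delta=0$ then $G-v$ is edgeless and the bound is trivial). It therefore suffices to prove the one-variable inequality
\[
\binom{\delta}{2}+\binom{x'}{3}\le\binom{x}{3},\qquad 0\le\delta\le x-1,\quad \binom{x'}{2}=\binom{x}{2}-\delta .
\]
To prove this, set $g(\delta):=\binom{x}{3}-\binom{\delta}{2}-\binom{x'}{3}$, with $x'=x'(\delta)$ determined by $\binom{x'}{2}=\binom{x}{2}-\delta$. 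Pascal's identity $\binom{x-1}{2}+\binom{x-1}{3}=\binom{x}{3}$ gives $g(0)=g(x-1)=0$. Differentiating the defining relation yields $dx'/d\delta=-1/(x'-\tfrac12)$, and a short computation then gives
\[
g''(\delta)=-1-\frac{2\,(6x'^2-6x'+2)}{3\,(2x'-1)^3}<0
\]
throughout the range (where $x'\ge 1$). So $g$ is strictly concave on $[0,x-1]$ and vanishes at both endpoints, hence $g\ge 0$ there, which is exactly the claimed inequality. This computation, together with checking the degenerate boundary regimes, is the technical heart of the argument.

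For the equality characterization, first delete all isolated vertices (this changes neither $e(G)$ nor $t(G)$), so we may assume $\delta\ge 1$. If $t(G)=\binom{x}{3}$ then $g(\delta)=0$, and strict concavity forces $\delta\in\{0,x-1\}$, hence $\delta=x-1$; in particular $x-1$, being a degree, is an integer, so $x\in\mathbb{Z}$. Equality in $t(G)\le\binom{\delta}{2}+t(G-v)$ then forces $G[N(v)]$ to be complete---so $\{v\}\cup N(v)$ spans a $K_x$---while equality $t(G-v)=\binom{x-1}{3}$ with $e(G-v)=\binom{x-1}{2}$ forces, by the induction hypothesis, $G-v=K_{x-1}$ (up to isolated vertices); combining these gives $G=K_x$. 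I expect the main obstacle to be executing the concavity computation cleanly and disposing of the small/degenerate cases ($x'$ near $1$, isolated vertices, the case $x=2$) rather than the structural part; alternatively one could deduce the bound from the Kruskal--Katona theorem by observing that the lower shadow of the triangle $3$-graph of $G$ is contained in $E(G)$, but the inductive route above is self-contained.
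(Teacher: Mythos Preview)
The paper does not prove this theorem; it is quoted as a known result from Lov\`asz's book (it is the continuous version of the Kruskal--Katona theorem for $3$-graphs, applied to the triangle hypergraph of $G$). So there is no ``paper's own proof'' to compare against.

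That said, your argument is essentially correct and is one of the standard self-contained proofs of this inequality. A few remarks. Your induction parameter is $n=|V(G)|$, so the base case should really be stated as ``$n\le 2$'' (or ``$n$ small'') rather than ``triangle-free graphs''; the latter is just the observation that if $t(G)=0$ there is nothing to prove. The calculus is right: with $x'$ determined by $\binom{x'}{2}=\binom{x}{2}-\delta$, one gets $dx'/d\delta=-1/(x'-\tfrac12)$ and, after simplification, $g''(\delta)=-1-\dfrac{4(3x'^2-3x'+1)}{3(2x'-1)^3}$, which is indeed negative since $x'\ge x-1\ge 1$ throughout $[0,x-1]$ and $3x'^2-3x'+1>0$ for all real $x'$. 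The endpoint values $g(0)=g(x-1)=0$ follow from $\binom{x}{3}=\binom{x-1}{2}+\binom{x-1}{3}$, so strict concavity gives $g\ge 0$ on the whole interval and the inequality is proved. For the equality case your outline is fine, but be slightly more careful: after deleting isolated vertices you get $\delta\ge 1$, strict concavity forces $\delta=x-1$, hence $x\in\mathbb{Z}$; then $G[N(v)]\cong K_{x-1}$ and, by induction, the non-isolated part of $G-v$ is a $K_{x-1}$ which must coincide with $N(v)$, and any remaining vertex of $G$ would be isolated---contradiction---so $G=K_x$.

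The alternative you mention at the end---deduce the bound from Kruskal--Katona by noting that the $2$-shadow of the triangle $3$-graph of $G$ sits inside $E(G)$---is exactly how Lov\`asz obtains the result, and is the route the paper implicitly relies on by citing \cite{L}. Your inductive/concavity proof is a nice elementary substitute that avoids invoking the general shadow theorem.
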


\begin{lemma}\label{monoedge}
Let $n, r\in \mathbb{N}$ with $r\geq 3$ and $\frac{4}{n}-\frac{4}{n^2} \leq \varepsilon<\frac12$. If $G$ is an $r$-colored graph of order $n$, which contains at least $(1-\varepsilon)\binom{n}{3}$ monochromatic triangles, then there exists a color $c$ such that the number of edges colored by $c$ is at least $e(G)-4r^2\varepsilon\binom{n}{2}$.
\end{lemma}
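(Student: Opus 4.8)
The plan is to first argue that a single color, say $c$, must be responsible for a majority of the monochromatic triangles, and then to show that the edges not colored $c$ must span very few triangles, which (via the Kruskal–Katona bound of Theorem~\ref{trinum}) forces there to be few of them. For the first step, partition the monochromatic triangles by their color; since there are at most $r$ colors and at least $(1-\varepsilon)\binom{n}{3}$ monochromatic triangles, some color $c$ accounts for at least $\frac{1-\varepsilon}{r}\binom{n}{3}$ of them. That is not quite enough on its own; a cleaner route is to look at the color classes $G_1,\dots,G_r$ (where $G_i$ is the spanning subgraph of edges colored $i$) and use convexity: writing $e_i=e(G_i)$, the number of monochromatic triangles is $\sum_i t(G_i)$ where $t$ denotes the triangle count, and $\sum_i e_i = e(G)$.

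Next I would bound $\sum_i t(G_i)$ from above in terms of the $e_i$. By Theorem~\ref{trinum}, if $e_i = \binom{x_i}{2}$ then $t(G_i)\le \binom{x_i}{3}\le \frac{x_i}{3}\binom{x_i}{2}$, and since $x_i \le n$ (as $G_i$ has at most $n$ vertices), $t(G_i)\le \frac{n}{3}e_i$ up to lower-order corrections; more precisely one gets $t(G_i) \le \frac{1}{3}(x_i-2)\binom{x_i}{2}$ and $x_i\le n$, so $\sum_i t(G_i) \le \frac{n-2}{3}\sum_i e_i = \frac{n-2}{3}e(G) \le \binom{n}{3}$ when $G$ is complete, but for a general dense graph this comparison is what drives the argument. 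The key inequality is that if the largest color class has $e(G)-m$ edges for some "deficiency" $m$ (i.e.\ $m$ edges of $G$ avoid the majority color $c$), then the triangle count drops substantially below $\binom{n}{3}$. Concretely, any monochromatic triangle not colored $c$ uses at least one of the $m$ off-color edges; the number of triangles of $K_n$ through a fixed edge is $n-2$, so the number of monochromatic triangles not colored $c$ is at most $m(n-2)$. Meanwhile the number of monochromatic triangles colored $c$ is at most $t(G_c)\le \binom{x_c}{3}$ where $\binom{x_c}{2}=e(G_c)=e(G)-m \le \binom{n}{2}-m$, so $x_c \le n - \frac{2m}{n}$ roughly, giving $t(G_c)\le \binom{n}{3} - \Omega(m\cdot n)$ by a short estimate on how $\binom{x}{3}$ decreases. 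Adding the two contributions and comparing with the hypothesis $(1-\varepsilon)\binom{n}{3}$ yields $\varepsilon\binom{n}{3} \gtrsim c\, m\, n$ for an absolute constant, i.e.\ $m \le O(\varepsilon n^2)$, which is the desired bound $m\le 4r^2\varepsilon\binom{n}{2}$ after tracking constants. (The factor $r^2$ and the crude constant $4$ leave plenty of slack; one does not need the sharp version of either estimate.)

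I expect the main obstacle to be making the second estimate — "$t(G_c)$ is bounded away from $\binom{n}{3}$ by a term linear in the deficiency $m$" — both correct and clean, since $t(G_c)$ can be as large as $\binom{n}{3}$ only when $G_c=K_n$, and one needs a quantitative stability form of Theorem~\ref{trinum}. There are two ways around this: either differentiate $\binom{x}{3}$ as a function of $x$ near $x=n$ to get $\binom{x_c}{3}\le \binom{n}{3}-\binom{n-1}{2}(n-x_c)$ and convert $n-x_c$ into a function of $m$ via $\binom{x_c}{2}\le \binom{n}{2}-m$; or avoid Kruskal–Katona for $G_c$ entirely and instead bound the number of monochromatic triangles of $G$ directly: every triangle of $G$ that is \emph{not} monochromatic in color $c$ either is non-monochromatic or is monochromatic in a color $\neq c$, and in all cases contains an edge not colored $c$, so the count of such triangles is at most $m(n-2)$; hence the number of monochromatic triangles is at most $\binom{n}{3}-(\text{triangles of }K_n\text{ with no }c\text{-edge that are still not all-}c)$... this needs care. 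The first route is the more robust one, and I would carry it out: set $\binom{x_c}{2}=e(G)-m$, note $e(G)\le\binom{n}{2}$ so $x_c\le n$ and in fact $n - x_c \ge \frac{2m}{2n-1}\ge \frac{m}{n}$, then plug into $\binom{x_c}{3}\le\binom{n}{3}-(n-x_c)\binom{n-1}{2}$ and combine with the $m(n-2)$ bound on off-color monochromatic triangles to close the inequality against $(1-\varepsilon)\binom n3$. The lower bound $\varepsilon\ge \frac4n-\frac4{n^2}$ in the hypothesis is presumably exactly what is needed to absorb the rounding in passing from $x_c$ to an integer and to guarantee $x_c\ge 2$ so that Theorem~\ref{trinum} applies.
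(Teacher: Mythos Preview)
Your plan has a real gap: both of the key estimates point the wrong way, and the comparison you want never closes. First, the bound ``non-$c$ monochromatic triangles $\le m(n-2)$'' overcounts by a factor of $3$, since every such triangle contributes \emph{three} off-colour edges; the honest bound is $m(n-2)/3$. Second, the linearisation $\binom{x_c}{3}\le\binom{n}{3}-(n-x_c)\binom{n-1}{2}$ is backwards: by convexity of $x\mapsto\binom{x}{3}$ the secant slope from $x_c$ to $n$ is at most the slope from $n-1$ to $n$, so in fact $\binom{x_c}{3}\ge\binom{n}{3}-(n-x_c)\binom{n-1}{2}$. With your bounds as stated, $\binom{n}{3}-\binom{x_c}{3}\approx (m/n)\cdot n^2/2=mn/2$ while the off-colour term is $m(n-2)\approx mn$, so the quantity $\binom{n}{3}-\binom{x_c}{3}-m(n-2)$ is \emph{negative} and the inequality against $(1-\varepsilon)\binom{n}{3}$ is vacuous --- no slack in the constant $4r^2$ can rescue it.

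The paper takes a different and shorter route that never applies Kruskal--Katona to the individual colour classes. Arguing by contradiction, it uses Theorem~\ref{trinum} just once to get $e(G)\ge(1-\varepsilon)\binom{n}{2}$, then pigeonholes to find two distinct colours, say red and blue, with at least $(1-\varepsilon)\binom{n}{2}/r$ and $4r\varepsilon\binom{n}{2}$ edges respectively, and double-counts: every red--blue pair of edges forces at least one triple that is not a monochromatic triangle, and each such triple is generated by at most $2+3(n-3)$ such pairs. This yields more than $\varepsilon\binom{n}{3}$ bad triples, contradicting the hypothesis. Your approach \emph{can} be salvaged --- if you replace $m(n-2)$ by $m(n-2)/3$ and use the exact identity $\binom{n}{3}-\binom{x_c}{3}=\tfrac{1}{3}(n-x_c)\binom{n}{2}+\tfrac{1}{3}(x_c-2)m$ (for $G=K_n$) together with $e(G_c)\ge e(G)/r$, the inequality does close, in fact with a constant $O(r)$ rather than $O(r^2)$ --- but the sketch as written does not get there.
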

\begin{proof}
We shall prove this lemma by contradiction. Let $\delta=4r^2\varepsilon$. Assume that none of the colors is used on at least $e(G)-\delta\binom{n}{2}$ edges. 

First, we conclude that $e(G)\geq (1-\varepsilon)\binom{n}{2}$. If not, then by Theorem~\ref{trinum}, the number of triangles of $G$ is less than 
$$\frac{\sqrt{2}}{3}(1-\varepsilon)^{3/2}\binom{n}{2}^{3/2}\leq (1-\varepsilon)\binom{n}{3},$$
which contradicts the assumption.

By the pigeonhole principle, we can assume without loss of generality that the set of red edges in $G$, denoted by $\mathrm{R}(G)$, satisfies
$|\mathrm{R}(G)|\geq(1-\varepsilon)\binom{n}{2}/r.$
By the contradiction assumption, we have $|\mathrm{R}(G)|<e(G)-\delta\binom{n}{2}$. Therefore, the number of non-red edges is greater than $\delta\binom{n}{2}$. Again, without loss of generality, we can assume that the set of blue edges in $G$, denoted by $\mathrm{B}(G)$, satisfies
$|\mathrm{B}(G)|\geq\delta\binom{n}{2}/r.$

For an edge in $\mathrm{R}(G)$ and an edge in $\mathrm{B}(G)$, these two edges either share one endpoint or are vertex disjoint, see Figure~\ref{figA}. In the first case, see Figure~\ref{fig:A1}, the triple $abc$ could not form a monochromatic triangle of $G$. In the latter case, see Figure~\ref{fig:A2}, at least one of $abc$ and $bcd$ is not a monochromatic triangle of $G$.
\begin{figure}[H]
\centering       
\begin{subfigure}{0.32\textwidth} 
\centering          
\resizebox{\linewidth}{!}{ 
\begin{tikzpicture}[thick, acteur/.style={circle, fill=black, thick, inner sep=2pt, minimum size=0.2cm}]
     
\node (1) at (0,2) [acteur,label=a]{};
\node (2) at (-2,0) [acteur,label=below:b]{};
\node (3) at (2, 0) [acteur,label=below:c]{};

\node at (-1.4, 0.8) [label=red]{};
\node at (1.4, 0.8) [label=blue]{};

\draw [line width=0.3mm, black] (1) to (2);
\draw [line width=0.3mm, black](1) to (3);

\draw [line width=0.3mm, black, dashed] (2) to (3);
\end{tikzpicture}
}
\caption{}
\label{fig:A1}
\end{subfigure}
\hspace{1.5cm}
\begin{subfigure}{0.4\textwidth} 
\centering   
\resizebox{\linewidth}{!}{ 
\begin{tikzpicture}[thick, acteur/.style={circle, fill=black, thick, inner sep=2pt, minimum size=0.2cm}]
     
\node (1) at (-2,2) [acteur,label=a]{};
\node (2) at (-2,0) [acteur,label=below:b]{};
\node (3) at (2, 2) [acteur,label=c]{};
\node (4) at (2, 0) [acteur,label=below:d]{};

\node at (-2.5, 0.6) [label=red]{};
\node at (2.6, 0.6) [label=blue]{};

\draw [line width=0.3mm, black] (1) to (2);
\draw [line width=0.3mm, black] (3) to (4);

\draw [line width=0.3mm, black, dashed] (2) to (3);
\end{tikzpicture}
}
\caption{}
\label{fig:A2}
\end{subfigure}
\caption{Two cases of a red-blue pair of edges.}
\label{figA}
\end{figure}
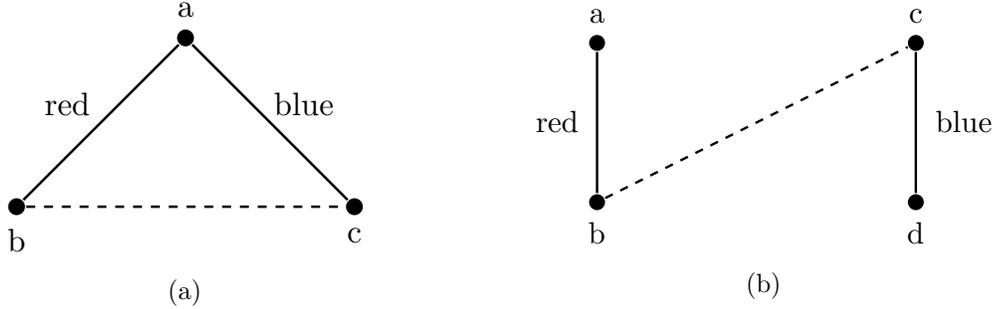

Let $\mathrm{NT}(G)$ be the family of triples $\{a, b, c\}$ which does not form a monochromatic triangle of $G$. The above discussion shows that each pair of red and blue edges generates at least one triple in $\mathrm{NT}(G)$.  Observe that each triple in $\mathrm{NT}(G)$ can be counted in at most $2 + 3(n-3)$ pairs of red and blue edges. Hence, we obtain that
$$|\mathrm{NT}(G)|\geq \frac{(1-\varepsilon)\binom{n}{2}/r\cdot \delta\binom{n}{2}/r}{2 + 3(n-3)}> \frac{\delta}{4r^2}\binom{n}{3}= \varepsilon\binom{n}{3},$$
which contradicts the assumption of the lemma.
\end{proof}

\section{Counting Gallai colorings in $r$-templates}
In this section, we aim to prove the following technical theorem, which will be used repeatedly in the rest of the paper.
\begin{thm}\label{2colorclass}
Let $n, r\in \mathbb{N}$ with $r\geq 3$, and $G$ be a graph of order $n$.
Suppose that $\delta = \log^{-11} n$ and $k$ is a positive constant, which does not depend on $n$. For two colors $i$, $j\in [r]$, denote by $\mathcal{F}=\mathcal{F}(i, j)$ the set of $r$-templates of order $n$, which contain at least $(1- k\delta)\binom{n}{2}$ edges with palette $\{i, j\}$. Then, for $n$ sufficiently large, 
$$|\mathrm{Ga}(\mathcal{F}, G)|\leq 2^{e(G)} + 2^{-\frac{n}{3\log^ 2n}}2^{\binom{n}{2}}.$$
\end{thm}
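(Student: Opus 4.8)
The plan is to fix the two colors, say $i=1$ and $j=2$, and decompose the Gallai $r$-colorings of $G$ counted in $\mathrm{Ga}(\mathcal{F},G)$ according to how they behave on the (few) edges that are \emph{not} assigned palette $\{1,2\}$ by the relevant template. Write $m = k\delta\binom{n}{2}$ for an upper bound on the number of such "exceptional" edges. Given $P\in\mathcal{F}$, let $E_0(P)$ be the set of edges with $P(e)=\{1,2\}$, so $|E(K_n)\setminus E_0(P)|\le m$. Any Gallai $r$-coloring $\chi$ that is a subtemplate of $P$ is determined by: (a) the subset $E^\star\subseteq E(K_n)\setminus E_0(P)$ of exceptional edges, together with their colors from $[r]$; (b) which edges of $G\cap E_0(P)$ receive color $1$ versus color $2$. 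The number of choices for (b) is at most $2^{e(G)}$, trivially. The number of choices for (a), summed over all ways of picking at most $m$ edges and an $r$-coloring of them, is at most $\sum_{t\le m}\binom{\binom{n}{2}}{t} r^{t}$, which is of order $2^{O(m\log n)} = 2^{O(k\delta \binom{n}{2}\log n)}$. Since $\delta = \log^{-11} n$, this is $2^{o(n^2/\log^{10}n)}$, far smaller than $2^{n/(3\log^2 n)}\cdot 2^{\binom{n}{2}}/2^{\binom{n}{2}}$ — wait, this needs care, since the overcount factor multiplies $2^{e(G)}$, not $2^{\binom{n}{2}}$ directly.

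So the real argument is a dichotomy on $e(G)$. If $e(G) \le \binom{n}{2} - \binom{n}{2}/\log^3 n$, say, then $2^{e(G)}\cdot 2^{O(k\delta\binom{n}{2}\log n)} \le 2^{\binom{n}{2} - \binom{n}{2}/\log^3 n + O(\binom{n}{2}/\log^{10} n)} \le 2^{\binom{n}{2} - \binom{n}{2}/(2\log^3 n)}$, which is comfortably below $2^{-n/(3\log^2 n)}2^{\binom{n}{2}}$ for large $n$ (here I would just need $\binom{n}{2}/(2\log^3 n) \ge n/(3\log^2 n)$, true for large $n$). If instead $e(G) > \binom{n}{2} - \binom{n}{2}/\log^3 n$, i.e.\ $G$ is very close to complete, then the crude factor $2^{e(G)}$ in part (b) is wasteful and must be replaced by a bound that accounts for the \emph{constraint} that most edges of $K_n$ lie in both $E_0(P)$ and $E(G)$: on such edges, choosing a $2$-coloring of $G$ restricted to $E_0(P)\cap E(G)$ gives at most $2^{|E_0(P)\cap E(G)|} \le 2^{e(G)}$, and this $2^{e(G)}$ is exactly the main term we are targeting, so no improvement is needed there. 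The issue is instead the \emph{interplay} with part (a): when we add back an exceptional edge colored with color $1$ or $2$, or an edge outside $E_0(P)$ that still gets color $1$ or $2$, we may be double-counting colorings already produced by part (b).

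To handle this cleanly I would, for each $\chi\in\mathrm{Ga}(\mathcal{F},G)$, canonically choose the template $P\in\mathcal{F}$ of which it is a subtemplate so as to maximize $|E_0(P)\cap E(G)|$ (or any fixed tie-breaking rule), and then define the "exceptional set" of $\chi$ to be the edges of $G$ not in $E_0(P)$ together with the edges of $G$ in $E_0(P)$ that are colored neither $1$ nor $2$ — but the latter is empty by definition of $E_0(P)$ as $\{1,2\}$-palette edges containing $\chi(e)$. Hence the exceptional edges of $\chi$ are exactly $E(G)\setminus E_0(P)$, of size at most $m$, and $\chi$ is recovered from (the unordered set of) these $\le m$ edges with their $\le r$ colors, plus a $2$-coloring of the at most $e(G)$ remaining edges of $G$. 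This gives
\[
|\mathrm{Ga}(\mathcal{F},G)| \;\le\; 2^{e(G)}\sum_{t=0}^{m}\binom{\binom{n}{2}}{t} r^{t} \;\le\; 2^{e(G)}\cdot 2^{2m\log n}
\]
for large $n$, using $\binom{\binom{n}{2}}{t}r^t \le \binom{n}{2}^t r^t \le 2^{2t\log n}$ and $m$-fold summation absorbed into the exponent. Then I would split: when $2m\log n \le e(G)$-slack this bound is at most $2^{e(G)} + (\text{small})$ after separating the $t=0$ term, but more simply, I would observe that $2^{e(G)}\cdot 2^{2m\log n} \le 2^{e(G)} \cdot (1 + 2^{2m\log n}) $ is not quite right; instead I separate $t=0$: the $t=0$ contribution is exactly $\le 2^{e(G)}$, and the $t\ge 1$ contribution is at most $2^{e(G)}\cdot(2^{2m\log n}-1) \le 2^{e(G)+2m\log n}$. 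Now $e(G)+2m\log n \le \binom{n}{2} + 2k\delta\binom{n}{2}\log n = \binom{n}{2}(1+2k\log n/\log^{11}n) = \binom{n}{2}(1+o(1/\log^{9}n))$. This is not yet below $2^{\binom{n}{2}}$ when $e(G)$ is close to $\binom{n}{2}$.

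The fix — and this is the step I expect to be the genuine obstacle — is that when $G$ is very dense we cannot afford the full factor $2^{2m\log n}$ on top of $2^{e(G)}$: the $t\ge1$ terms, when $e(G)$ is close to $\binom{n}{2}$, must be controlled against $2^{-n/(3\log^2 n)}2^{\binom{n}{2}}$, which requires $e(G)+2m\log n \le \binom{n}{2} - n/(3\log^2 n)$, i.e.\ $\binom{n}{2}-e(G) \ge 2m\log n + n/(3\log^2 n)$. This fails precisely when $G$ has $\ge \binom{n}{2} - 2m\log n - n/(3\log^2 n)$ edges, essentially $G$ is complete up to $O(\binom{n}{2}/\log^{10}n)$ edges. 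In that regime the point is that adding back the $\le m$ exceptional edges with colors outside $\{1,2\}$ forces rainbow-triangle constraints: an edge $e=uv$ colored $3$ cannot coexist with a $\{1,2\}$-colored triangle through it, so each exceptional edge of color $\ge 3$ eliminates a constant fraction (about $3/4$) of the $2$-colorings of the incident edges; more carefully, if $e=uv$ is colored $c\notin\{1,2\}$ then for every common neighbor $w$ of $u,v$ in $G\cap E_0(P)$, the pair $uw, vw$ cannot both... actually they can both be colored $1$, or both $2$, but not one $1$ and one $2$ without creating a rainbow triangle $uvw$ — so the pair $\{uw,vw\}$ has only $2$ legal colorings instead of $4$. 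Summing over the $\Omega(n)$ common neighbours of each exceptional edge gives a saving of $2^{-\Omega(n)}$ per color-$\ge3$ exceptional edge, easily beating the $2^{2\log n}$ cost of choosing and coloring that edge. Thus in the dense regime the $t\ge 1$ terms contribute at most $2^{e(G)}\sum_{t\ge1}\binom{\binom{n}{2}}{t}r^t 2^{-\Omega(nt)} = 2^{e(G)}\cdot 2^{-\Omega(n)} \le 2^{-n/(3\log^2 n)}2^{\binom{n}{2}}$, while the $t=0$ term is at most $2^{e(G)}$, and the two cases together give the claimed bound. I would organize the write-up as: (1) set up the exceptional-set encoding and the canonical choice of $P$; (2) prove the crude bound $|\mathrm{Ga}|\le 2^{e(G)}\sum_{t\le m}\binom{\binom n2}{t}r^t$ and dispose of the case $e(G)\le \binom n2 - \binom n2/\log^3 n$ immediately; (3) in the complementary dense case, refine the count of $2$-colorings compatible with a fixed choice of exceptional edges and colors, using the rainbow-triangle obstruction to extract the $2^{-\Omega(nt)}$ factor; (4) sum. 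The main delicacy is making the rainbow-triangle saving in step (3) rigorous while the template $P$ (hence $E_0(P)$) varies — which is why the canonical choice in step (1) maximizing $|E_0(P)\cap E(G)|$ is essential, ensuring that the "background" $2$-colored region is as large as possible and the exceptional edges genuinely see $\Omega(n)$ common neighbours inside it.
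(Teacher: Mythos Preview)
Your high-level decomposition (separate the $t=0$ contribution $2^{e(G)}$ from the $t\ge 1$ contribution, and handle sparse $G$ by the crude encoding bound) matches the paper's shape, and your sparse case is fine. But step (3) in the dense case contains a real gap: the claimed saving of $2^{-\Omega(n)}$ \emph{per} exceptional edge, multiplying to $2^{-\Omega(nt)}$, is false in general. Your argument assumes that each exceptional edge $uv$ sees $\Omega(n)$ common neighbours $w$ with \emph{both} $uw,vw$ non-exceptional; this fails exactly when the exceptional edges concentrate at a vertex. Concretely, take $G=K_n$ and let $E^\star$ be the full star at $u$ (so $t=n-1$), all edges coloured $3$. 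For any triangle $uvw$ both $uv$ and $uw$ lie in $E^\star$ and carry colour $3$, so there is no rainbow constraint on $vw$ whatsoever; every $2$-colouring of $K_n\setminus\{u\}$ is compatible. The true count is $2^{\binom{n-1}{2}}=2^{e(G)-t}$ with \emph{zero} saving, while your bound asserts $2^{e(G)-t-\Omega(nt)}=2^{e(G)-\Omega(n^2)}$, off by a factor $2^{\Omega(n^2)}$. More generally, once $d_{E^\star}(u)+d_{E^\star}(v)$ is comparable to $n$ your ``$\Omega(n)$ good common neighbours'' estimate collapses.

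The paper's proof avoids this by partitioning not by $|S(F)|$ but by the \emph{maximum matching} in $S(F)$. A large matching (Lemma~\ref{lmatching}) gives genuinely independent savings from disjoint exceptional edges. For a small matching of size $t$ (Lemma~\ref{tmatching}), all of $S(F)$ is incident to a $2t$-vertex set $A$, and the proof splits into three cases; the concentrated situation you miss is Case~3, where for each $u\in A$ almost all crossing edges from $u$ are either exceptional or non-exceptional. In the dense subcase the key new idea is to bound the number of valid colourings of the exceptional edges at $u$ by $r^{c(G_u)}$, where $c(G_u)$ is the number of components of $G$ restricted to the exceptional neighbours of $u$ --- this is small precisely because $G$ is dense. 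So the saving in the star-like case comes not from constraining the $2$-coloured background but from constraining the colouring of $E^\star$ itself; your bound of $r^t$ on that step throws this away. Without an argument of this type (or some other mechanism controlling concentrated $E^\star$), your step (3) does not go through.
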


Fix two colors $1\leq i< j \leq r$, and let $S= [r]-\{i, j\}$.
For an $r$-coloring $F$ of $G$, let $S(F)$ be the set of edges in $G$, which are colored by colors in $S$.
From the definition of $\mathcal{F}$, we immediately obtain the following proposition.
\begin{prop}\label{bedge}
For every $F\in \mathrm{Ga}(\mathcal{F}, G)$, the number of edges in $S(F)$ is at most $k\delta\binom{n}{2}$.
\end{prop}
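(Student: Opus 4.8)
The plan is to simply unwind the definitions, since Proposition~\ref{bedge} is a bookkeeping consequence of the notions of \emph{subtemplate}, of $\mathcal F(i,j)$, and of $S(F)$; no combinatorial input (not even the Gallai/rainbow-triangle-free property of $F$) is needed. Fix $F\in\mathrm{Ga}(\mathcal F,G)$. By the definition of $\mathrm{Ga}(\mathcal F,G)$ there is a template $P\in\mathcal F$ with $F\subseteq P$, that is, $F(e)\subseteq P(e)$ for every $e\in E(K_n)$. By the definition of $\mathcal F=\mathcal F(i,j)$, the set $E^\star:=\{e\in E(K_n): P(e)=\{i,j\}\}$ has size at least $(1-k\delta)\binom n2$, so $|E(K_n)\setminus E^\star|\le k\delta\binom n2$.

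The one observation to make is that $S(F)\subseteq E(K_n)\setminus E^\star$. Indeed, take $e\in S(F)$. Then $e\in E(G)$ and, since $F$ is an $r$-coloring of $G$, $F(e)$ is a single color $c:=F(e)$ with $c\in S=[r]\setminus\{i,j\}$. From $F\subseteq P$ we get $c\in P(e)$, so $P(e)$ contains a color outside $\{i,j\}$ and hence $P(e)\neq\{i,j\}$, i.e.\ $e\notin E^\star$. Combining the two bounds yields $|S(F)|\le |E(K_n)\setminus E^\star|\le k\delta\binom n2$, which is exactly the claim.

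There is no genuine obstacle here. The only point that warrants a moment's care is that ``edges with palette $\{i,j\}$'' in the definition of $\mathcal F(i,j)$ must be read as edges $e$ with $P(e)$ \emph{equal} to $\{i,j\}$ (not merely containing $\{i,j\}$); this is precisely what forces any edge of $G$ that $F$ colors from $S$ to lie outside the prescribed $(1-k\delta)\binom n2$ edges, and it is the reading consistent with the rest of Section~3.
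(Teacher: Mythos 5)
Your proof is correct and is exactly the unwinding of definitions that the paper intends when it says the proposition follows "immediately from the definition of $\mathcal{F}$": an edge colored from $S$ cannot be a subtemplate of a palette equal to $\{i,j\}$, so all such edges lie among the at most $k\delta\binom{n}{2}$ remaining edges. Your reading of ``palette $\{i,j\}$'' as $P(e)=\{i,j\}$ is also the one the paper uses (cf.\ the construction in the proof of Theorem~\ref{2edge}), so there is nothing to add.
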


\begin{lemma}\label{lmatching}
Let $\mathcal{F}_1$ be the set of $F\in \mathrm{Ga}(\mathcal{F}, G)$ such that $S(F)$ contains a matching of size $\delta n\log^2 n$. Then, for $n$ sufficiently large,
 $$|\mathcal{F}_1|\leq 2^{-\frac{n^2}{5\log^9 n}}2^{\binom{n}{2}}.$$
\end{lemma}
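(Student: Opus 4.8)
The plan is to bound $|\mathcal{F}_1|$ by a union bound: first choose the matching $M$ of size $m := \delta n\log^2 n$ that is guaranteed to lie in $S(F)$, then choose a coloring with colors from $S$ on those $m$ edges, then colour the remaining edges of $G$. The cost of the first two choices is at most $\binom{\binom{n}{2}}{m}(r-2)^m$, which is of order $2^{O(m\log n)} = 2^{O(\delta n\log^3 n)}$, a tiny factor. The real gain has to come from the colouring of the rest of $G$: once we have fixed a matching $M \subseteq S(F)$, the rainbow-triangle-free condition forces many other edges to have severely restricted palettes, so that far fewer than $2^{\binom{n}{2}}$ completions are possible.

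The key structural observation is the following. Let $e = xy$ be one of the $m$ matched edges, coloured with some colour $s \in S = [r]\setminus\{i,j\}$. For any third vertex $z$, the triangle $xyz$ must not be rainbow, so the pair of edges $\{xz, yz\}$ cannot receive two colours both different from $s$ and from each other; in particular it is not possible to have $xz$ coloured $i$ and $yz$ coloured $j$ (or vice versa) — indeed any edge $xyz$ with $xy$ of colour $s\notin\{i,j\}$ and $xz$ of colour $i$ forces $yz \in \{i,s\}$. So, morally, for a typical vertex $z$ the pair of edges $(xz,yz)$ can be $\{i,i\},\{j,j\},\{i,s\}-$type,$\{j,s\}-$type, etc., but the ``free'' choice $\{i,j\}\times\{i,j\}$ that would normally give $4$ options is cut down. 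A clean way to package this: for each matched edge $xy$ and each $z$, the number of colourings of $(xz,yz)$ that are consistent (both edges non-rainbow-forming with $xy$, using only colours in $[r]$) is at most $r^2 - 2$ rather than the naive count, but more to the point, among colourings of $(xz,yz)$ using colours in $\{i,j\}$ the pattern must avoid being ``opposite'' — effectively giving at most $3$ of the $4$ two-colour patterns. Summing the $\log$ of the saving over all $z$ and all $m$ matched edges gives a multiplicative loss of roughly $2^{-cmn}$ for an absolute constant $c$ — and here is the crucial point: we must be careful that the matched edges are vertex-disjoint, so the $z$-vertices for different matched edges are different edges of $K_n$, hence these savings genuinely multiply. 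With $m = \delta n\log^2 n$ this loss is $2^{-c\delta n^2\log^2 n}$, which comfortably beats the entropy factor $2^{O(\delta n\log^3 n)}$ from choosing $M$ and its colouring, leaving $2^{-\Omega(\delta n^2 \log^2 n)}2^{\binom n2} = 2^{-\Omega(n^2/\log^9 n)}2^{\binom n2}$.

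Concretely I would organise it as: (1) Fix $M$ among $\binom{\binom{n}{2}}{m}$ choices and a colouring $\phi: M \to S$ among $(r-2)^m$ choices; note $\log\bigl[\binom{\binom n2}{m}(r-2)^m\bigr] \le 3m\log n$ for large $n$. (2) For a fixed $(M,\phi)$, bound the number of Gallai $r$-colourings of $G$ (sub-templates of some $P\in\mathcal F$) restricting to $\phi$ on $M$: colour $E(G)\setminus M$ edge by edge; when colouring an edge $xz$ or $yz$ incident to a matched edge $xy\in M$, the palette is reduced. Write $V(M)$ for the $2m$ endpoints; for $z\notin V(M)$, the two edges $xz,yz$ to a matched pair $xy$ together have at most (some constant) $<4$ patterns using $\{i,j\}$, plus a bounded number of patterns using a colour of $S$ — but by Proposition~\ref{bedge} the number of $S$-coloured edges is at most $k\delta\binom n2$, so those contribute only another $2^{O(\delta n^2 \log n)}$ factor, again negligible. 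This yields at most $2^{-\Omega(mn)}\cdot 2^{\binom n2}$ completions. (3) Multiply (1) and (2) and simplify.

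The main obstacle — and the step that needs the most care — is (2): making the ``at most $3$ out of $4$ patterns'' saving rigorous and, above all, ensuring the savings from distinct matched edges are independent enough to multiply. The vertex-disjointness of $M$ is what makes this work: the pairs of edges $\{xz,yz\}$ associated with a fixed $z$ and varying matched edges $xy$ are edge-disjoint, so one can reveal the colours in a fixed order (say, process matched edges one at a time, and for each process all $z$) and at each of the $\approx mn$ steps gain a multiplicative factor $\le 3/4$ (or at worst, to be safe, $\le 1-c$ for a small absolute constant $c$, accounting for the rare $S$-coloured edges via Proposition~\ref{bedge}). One must also check the bookkeeping constant in the exponent: $c\cdot m\cdot(n-2m)/2 \ge c\delta n^2\log^2 n/4 \ge \tfrac15 n^2/\log^9 n$ for large $n$ since $\delta=\log^{-11}n$, and that this dominates $3m\log n = 3\delta n\log^3 n$ and the $S$-edge correction $O(\delta n^2\log n)$. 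Everything else is routine.
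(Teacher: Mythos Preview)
Your approach is essentially the same as the paper's: union-bound over the matching $M$, then exploit the fact that for each matched edge $xy$ (coloured $s\in S$) and each outside vertex $z$, the pair $(xz,yz)$ is constrained, and multiply these savings using that $M$ is a matching. Two remarks.

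First, a small correction that only helps you: if $xy$ has colour $s\notin\{i,j\}$ and both $xz,yz$ receive colours from $\{i,j\}$, then \emph{both} mixed patterns $(i,j)$ and $(j,i)$ create a rainbow triangle, so only $2$ of the $4$ patterns survive, not $3$. This is exactly the saving the paper uses (``they must have the same colour''), giving one full bit per triangle rather than $\log(4/3)$.

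Second, the paper organises the bookkeeping in a slightly cleaner order than you sketch: rather than fixing only $M$ and then later ``accounting for the rare $S$-coloured edges via Proposition~\ref{bedge}'', it first fixes the \emph{entire} set $E^S$ of $S$-coloured edges and their colours (cost $\sum_{i\le k\delta\binom n2}\binom{\binom n2}{i}r^{k\delta\binom n2}=2^{O(\delta n^2\log n)}$), then chooses $M\subseteq E^S$, and then defines $\mathcal T$ to be the triangles on $M$ with apex in $B=[n]\setminus V(M)$ whose two side-edges lie \emph{outside} $E^S$. A short counting argument (if $|\mathcal T|<\tfrac14\delta n^2\log^2 n$ then $|E^S|$ would exceed $k\delta\binom n2$) gives $|\mathcal T|\ge\tfrac14\delta n^2\log^2 n$, and now the saving of one bit per triangle is automatic with no conditional-revealing argument needed. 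Your route works too, but this ordering removes the step you flagged as ``the main obstacle''.
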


\begin{proof}
Let us consider the ways to color $G$ so that the resulting colorings are in $\mathcal{F}_1$. 
We first choose the set of edges $E^S$ which will be colored by the colors in $S$. Note that $E^S$ must contain a matching of size $\delta n\log^2 n$ by the definition of $\mathcal{F}_1$.
By Proposition~\ref{bedge}, there are at most $\sum_{i\leq k\delta\binom{n}{2}}\binom{\binom{n}{2}}{i}$ choices for such $E^S$, and the number of ways to color them is at most $r^{k\delta\binom{n}{2}}.$ 
In the next step, take a matching $M$ of size $\delta n\log^2 n$ in $E^S$; the number of ways to choose such matching is at most $\binom{\binom{n}{2}}{\delta n\log^2 n}$. 

Let $A=V(M)$ and $B=[n]\setminus A$. Denote by $\mathcal{T}$ the set of triangles of $K_n$ with a vertex in $B$ and an edge from $M$, which contain no edge in $E^S\cap G[A, B]$. 
We claim that $|\mathcal{T}|\geq \frac14\delta n^2\log^2 n$ as otherwise we would obtain that 
\[|E^S| \geq |B|\cdot \delta n\log^2 n - |\mathcal{T}| + |M|\geq \frac12\delta n^2\log^2 n - \frac14\delta n^2\log^2 n=\frac14\delta n^2\log^2 n > k\delta\binom{n}{2},\]
which, by Proposition \ref{bedge}, contradicts the fact that $F\in \mathrm{Ga}(\mathcal{F}, G)$.
Note that if a triangle $T$ in $\mathcal{T}$ contains more than one uncolored edge, then they must have the same color in order to avoid the rainbow triangle. Hence, the number of ways to color the uncolored edges in $\mathcal{T}$ is at most $2^{|\mathcal{T}|}$. 

There remain at most $\binom{n}{2} - 2|\mathcal{T}|$ uncolored edges and they can only be colored by $i$ or $j$, as edges in $E_S$ are already colored. Hence, the number of ways to color the rest of edges is at most
$2^{\binom{n}{2} - 2|\mathcal{T}|}.$
In conclusion, we obtain that 
 \begin{equation*}
  \begin{aligned}
 |\mathcal{F}_1| &  \leq \textstyle\sum_{i\leq k\delta\binom{n}{2}}\binom{\binom{n}{2}}{i}r^{k\delta\binom{n}{2}}
\binom{\binom{n}{2}}{\delta n\log^2 n}
 \cdot 2^{|\mathcal{T}|}
\cdot 2^{\binom{n}{2} - 2|\mathcal{T}|} \\
& \leq 2^{O(\delta n^2 \log n)}
\cdot 2^{O(\delta n \log^3 n)}
 \cdot 2^{\binom{n}{2} - \frac14\delta n^2\log^2 n}
\leq 2^{\binom{n}{2}-\frac{n^2}{5\log^9 n}}.
\end{aligned}
\end{equation*}
\end{proof}

\begin{lemma}\label{tmatching}
For every integer $1\leq t < \delta n\log^2 n$, let $\mathcal{F}(t)$ be the set of $F\in \mathrm{Ga}(\mathcal{F}, G)$, in which the maximum matching of $S(F)$ is of size $t$. Then, for $n$ sufficiently large, $$|\mathcal{F}(t)|\leq 2^{-\frac{n}{2\log^2n}}2^{\binom{n}{2}}.$$
\end{lemma}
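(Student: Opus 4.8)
The plan is to imitate the strategy of Lemma~\ref{lmatching}, but now exploiting a \emph{maximum} matching $M$ of $S(F)$ (which has size exactly $t$) and compensating its small size by a more careful accounting. First I would fix $M=\{a_1b_1,\dots,a_tb_t\}\subseteq S(F)$, set $A=V(M)$ and $B=[n]\setminus A$, and record that, $M$ being maximum, $B$ is independent in $S(F)$; hence every edge of $S(F)$ is incident to $A$, while every edge of $G[B]$ gets a colour from $\{i,j\}$. Then I would bound $|\mathcal{F}(t)|$ by the product of: (a) the number of ways to choose $M$ and colour it, at most $\binom{\binom n2}{t}(r-2)^t=2^{O(t\log n)}$; (b) the number of ways to choose the remaining edges of $S(F)$ (at most $k\delta\binom n2$ of them, all incident to $A$) together with their colours — by exactly the estimate used in the proof of Lemma~\ref{lmatching} this is at most $2^{O(\delta n^2\log\log n)}$, a bound independent of $t$; and (c) the number of Gallai $\{i,j\}$-colourings of $E(G)\setminus S(F)$.

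For factor (c) I would extract a gain from triangles exactly as in Section~3: for each matching edge $a_\ell b_\ell$ (which carries a colour in $S$) and each $v\in B$ with $a_\ell v,b_\ell v\in E(G)\setminus S(F)$, the triangle $a_\ell b_\ell v$ is a triangle of $G$ which is rainbow unless $\mathrm{col}(a_\ell v)=\mathrm{col}(b_\ell v)$; since $M$ is a matching and the apex $v$ varies, these forced pairs of edges are pairwise disjoint. Hence the number of admissible $\{i,j\}$-colourings of $E(G)\setminus S(F)$ is at most $2^{\,e(G)-|S(F)|-p}$, where $p\ \ge\ \sum_{\ell=1}^{t}\bigl|N_G(a_\ell)\cap N_G(b_\ell)\cap B\bigr|-|S(F)|$ (one can in fact use \emph{all} edges of $S(F)$, passing to the ``merge graph'' on $E(G)\setminus S(F)$ and counting its components, which only improves $p$). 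Combining (a)--(c), $|\mathcal{F}(t)|\le 2^{\,e(G)+O(\delta n^2\log\log n)+O(t\log n)-p}$, so it remains to show that the forced‑pair count $p$ exceeds the entropy cost of steps (a)--(b) by at least $n/(2\log^2 n)$.

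I would close this by a density dichotomy. If $G$ misses at least $n^2/\log^{10}n$ edges, no triangle argument is needed: using Proposition~\ref{bedge} and a routine estimate of $\sum_{s\le k\delta\binom n2}\binom{e(G)}{s}(r-2)^s2^{-s}$, one gets $|\mathcal{F}(t)|\le|\mathrm{Ga}(\mathcal{F},G)|\le 2^{\,e(G)+O(\delta n^2\log\log n)}\le 2^{\binom n2-n/(2\log^2 n)}$, since $n^2/\log^{10}n$ dominates both $\delta n^2\log\log n$ and $n/(2\log^2 n)$. Otherwise $G$ is very dense, so all but $o(n/\log^5 n)$ of its vertices have degree exceeding $n-n/\log^5 n$, and consequently every edge between two such ``rich'' vertices has more than $n-2n/\log^5 n$ common neighbours. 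I would then choose $M$ to consist of rich--rich edges whenever $S(F)$ contains enough of them, so that each term $|N_G(a_\ell)\cap N_G(b_\ell)\cap B|$ is $n(1-o(1))$ and $p$ is large; and in the complementary case $S(F)$ is covered by the $o(n/\log^5 n)$ non‑rich vertices, which makes both the entropy of step (b) tiny and forces $e(G)\le\binom n2-|S(F)|/(2\log^5 n)$, again absorbing the loss.

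The main obstacle is precisely this last balancing act: one must guarantee, uniformly over $1\le t<\delta n\log^2 n$, over all degree sequences of $G$, and over all possible shapes of $S(F)$, that the number of disjoint forced monochromatic pairs dominates the entropy of specifying the $S$‑coloured edges plus the required $n/(2\log^2 n)$. The delicate configurations are the ``star‑like'' ones, where $S(F)$ has small matching number but many edges at a single low‑degree hub: there the matching‑based bound on $p$ is weak, and one must instead combine a Hall‑type greedy argument to produce many disjoint forced pairs inside the large common neighbourhoods with the fact that a low‑degree hub already makes $e(G)$ correspondingly smaller. Making all of these estimates fit together with explicit constants is the technical heart of the lemma; everything else is bookkeeping of the kind already carried out in Lemmas~\ref{monoedge} and \ref{lmatching}.
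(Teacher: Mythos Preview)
Your overall framework --- fix a maximum matching $M$ of $S(F)$, set $A=V(M)$, $B=[n]\setminus A$, and use maximality to force every edge of $G[B]$ into $\{i,j\}$ --- matches the paper, and your sparse-$G$ case is essentially correct. But the ``balancing act'' you flag as the main obstacle is a genuine gap that your sketch does not close. Write $\mathrm{Cr}(S)$ for the set of $S$-coloured edges in $G[A,B]$. Your forced-pair bound is $p \ge \sum_\ell |N_G(a_\ell)\cap N_G(b_\ell)\cap B| - |\mathrm{Cr}(S)|$; even with every matching edge rich--rich this is only about $nt - |\mathrm{Cr}(S)|$. Nothing in your argument prevents $|\mathrm{Cr}(S)|$ from being of order $nt$ (the a~priori cap is $2t(n-2t)$), and in that regime $p$ is useless while the entropy of specifying and colouring $\mathrm{Cr}(S)$ is still $\Theta(nt)$. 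The rich-vertex/Hall-type fix does not help: the difficulty is not a shortage of common neighbours but the subtraction of $|\mathrm{Cr}(S)|$, and your ``complementary case'' assertion that $S(F)$ must then be covered by the few non-rich vertices is false in general.

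The paper resolves this with a three-way split on $\mathrm{Cr}(S)$ rather than a density dichotomy on $G$ up front. Case~1 is $|\mathrm{Cr}(S)|\le nt/\log^2 n$: then your matching-triangle argument works, with selection entropy $2^{O(nt/\log n)}$ against savings $\ge nt/4$. Case~2 is the idea you are missing: if some single vertex $u\in A$ has both $\ge n/\log^4 n$ $S$-neighbours and $\ge n/\log^4 n$ $\{i,j\}$-neighbours in $B$, then for each such pair $(v,w)$ the colour of $vw\in G[B]$ is forced to equal that of $uw$, yielding a \emph{quadratic} saving $\ge n^2/\log^8 n$ that dominates any $O(nt)\le n^2/\log^9 n$ cost. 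Case~3 is the complement (every $u\in A$ is polarised), which already makes the selection entropy for $\mathrm{Cr}(S)$ drop to $2^{O(nt/\log^3 n)}$; only here does the paper apply a density split, and in the dense subcase a component argument on each $G[N_S(u)\cap B]$ --- close to your merge-graph idea --- bounds the colouring entropy of $\mathrm{Cr}(S)$ by $r^{O(nt/\log^3 n)}$, after which the saving is simply $2^{-|\mathrm{Cr}(S)|}<2^{-nt/\log^2 n}$ from the crossing edges that are no longer freely two-colourable.
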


\begin{proof}
For a fixed $t$, let us count the ways to color $G$ so that the resulting colorings are in $\mathcal{F}(t)$. 
By the definition of $\mathcal{F}(t)$, among all edges which will be colored by the colors in $S$, there exists a maximum matching $M$ of size $t$. 
We first choose such matching; the number of ways is at most $\binom{\binom{n}{2}}{t}$. 
Once we fix the matching $M$, let $A = V(M)$ and $B= [n]\setminus A$. By the maximality of $M$, we immediately obtain the following claim.
\begin{claim}\label{claim2color}
None of the edges in $G[B]$ can be colored by the colors in $S$. \qed
\end{claim}
Denote by $\mathrm{Cr}(S)$ the set of edges in $G[A, B]$ which will be colored by the colors in $S$. For a vertex $u\in A$, denote by $\mathrm{Cr}(S, u)$ the set of edges in $\mathrm{Cr}(S)$ with one endpoint $u$. Similarly, define $\mathrm{Cr}(\{i, j\}, u)$ to be the set of edges in $G[u, B]$ which will be colored by the colors $i$ or $j$.
We shall divide the proof into three cases.\\

\noindent\textbf{Case 1}: $|\mathrm{Cr}(S)|\leq \frac{nt}{\log^2 n}$. \\
We first color the edges in $G[A]$ and the number of options is at most $r^{\binom{2t}{2}}$.
In the next step, we select and color the edges in $\mathrm{Cr}(S)$; by the above inequality, the number of ways is at most $\sum_{i\leq \frac{nt}{\log^2 n}}\binom{2nt}{i}r^{\frac{nt}{\log^2 n}}$. 
By Claim~\ref{claim2color}, the remaining edges can only use the colors $i$ or $j$.
Let $\mathcal{T}$ be the set of triangles of $K_n$ formed by a vertex in $B$ and an edge from $M$, which contain no edge in $\mathrm{Cr}(S)$. We claim that $|\mathcal{T}|\geq \frac14nt$ as otherwise we would obtain 
\[|\mathrm{Cr}(S)|\geq |B|t - |\mathcal{T}| \geq\frac12 nt -\frac14 nt >\frac{nt}{\log^2 n},\]
which contradicts the assumption.
If a triangle $T$ in $\mathcal{T}$ contains more than one uncolored edge, then they must have the same color in order to avoid the rainbow triangle. Hence, the number of ways to color the uncolored edges in $\mathcal{T}$ is at most $2^{|\mathcal{T}|}$. 

There remain at most $\binom{n}{2} - 2|\mathcal{T}|-\binom{2t}{2}$ uncolored edges, and they can be colored by $i$ or $j$. Therefore the number of ways to color the rest of the edges is at most
$2^{\binom{n}{2}-2|\mathcal{T}|-\binom{2t}{2}}.$
In conclusion, we obtain that the number of $r$-coloring $F\in \mathcal{F}(t)$ with $|\mathrm{Cr}(S)|\leq \frac{nt}{\log^2 n}$ is at most
 \begin{equation*}
  \begin{split}
&\binom{\binom{n}{2}}{t}
\cdot r^{\binom{2t}{2}}
\cdot \textstyle\sum_{i\leq \frac{nt}{\log^2 n}}\binom{2nt}{i}r^{\frac{nt}{\log^2 n}}
\cdot 2^{|\mathcal{T}|}
\cdot 2^{\binom{n}{2}-2|\mathcal{T}|-\binom{2t}{2}}\\
\leq & 2^{O(t\log n)}
\cdot 2^{O(t^2)}
\cdot 2^{O\left(\frac{nt}{\log n}\right)}
\cdot 2^{\binom{n}{2}-\frac14nt}
\leq 2^{\binom{n}{2}-\frac15nt}
\leq 2^{\binom{n}{2}-\frac15n},
\end{split}
\end{equation*}
where the third inequality is given by $t^2\leq t\cdot \delta n\log^2 n=nt/\log^9 n.$
\\

\noindent\textbf{Case 2}: There exists a vertex $u\in A$ such that 
\begin{equation}\label{midde}
|\mathrm{Cr}(S, u)|\geq \frac{n}{\log^4 n} 
\quad\quad\text{and}\quad\quad
|\mathrm{Cr}(\{i, j\}, u)|\geq \frac{n}{\log^4 n}.
\end{equation}
We first choose the vertex $u$, and the number of options is at most $2t$. Moreover, the number of ways to select and color edges in $\mathrm{Cr}(S, u)$ is at most $r^n2^n$.
In the next step, we color all the uncolored edges in $G[A, B]$ and $G[A]$, and the number of ways is at most $r^{2nt+\binom{2t}{2}}$. 
Let $\mathcal{T}$ be the set of triangles $T=\{uvw\}$ of $K_n$, in which $v, w\in B$, $uv\in \mathrm{Cr}(S, u)$, and $uw\in \mathrm{Cr}(\{i, j\}, u)$. By the relation (\ref{midde}), we have $|\mathcal{T}|\geq \frac{n^2}{\log^8 n}$.
For every triangle $T=\{uvw\}\in\mathcal{T}$, if $vw$ is an edge of $G$, then by Claim~\ref{claim2color} it can only be colored by $i$ or $j$, and 
must have the same color with $uw$ in order to avoid the rainbow triangle.
Therefore, the number of ways to color the uncolored edges in $\mathcal{T}$ is $1$. 

There remain at most $\binom{n}{2} - |\mathcal{T}|$ uncolored edges in $B$, as other edges are already colored. By Claim~\ref{claim2color}, none of the remaining edges in $B$ could use the colors from $S$. Therefore, the number of ways to color the rest of edges is at most 
$2^{\binom{n}{2}-|\mathcal{T}|}.$
In conclusion, we
obtain that the number of $F\in \mathcal{F}(t)$ which is included in Case 2 is at most
 \begin{equation*}
\binom{\binom{n}{2}}{t}
\cdot 2t \cdot r^n2^n
\cdot r^{2nt+\binom{2t}{2}}
\cdot 2^{\binom{n}{2}-|\mathcal{T}|}
\leq 2^{O(t\log n)}
\cdot 2^{O(n)}
\cdot 2^{O(nt)}
\cdot 2^{\binom{n}{2}-\frac{n^2}{\log^8 n}}
\leq 2^{\binom{n}{2}-\frac{n^2}{2\log^8 n}},
\end{equation*}
where the last inequality is given by the condition that $nt\leq n\cdot \delta n\log^2n = n^2/\log^9n.$\\

\noindent\textbf{Case 3:} $|\mathrm{Cr}(S)|> \frac{nt}{\log^2 n}$, and for every vertex $u\in A$, 
\begin{equation}\label{exde}
|\mathrm{Cr}(S, u)|< \frac{n}{\log^4 n} 
\quad\quad\text{or}\quad\quad
|\mathrm{Cr}(\{i, j\}, u)|< \frac{n}{\log^4 n}.
\end{equation}
We first color the edges in $G[A]$ and the number of ways is at most $r^{\binom{2t}{2}}$. By~(\ref{exde}), for every vertex $u\in A$, the number of ways to select $\mathrm{Cr}(S, u)$ is at most $2\sum_{i\leq n/\log^4 n}\binom{n}{i}\leq 2^{n/\log^3 n}$. Therefore, the number of ways to select $\mathrm{Cr}(S)$ is at most $2^{2nt/\log^3 n}$.\\

\noindent\textbf{Subcase 3.1: }$e(G)\leq \binom{n}{2} - \frac{n^2}{4\log^6 n}.$\\
The number of ways to color $\mathrm{Cr}(S)$ is at most $r^{2nt}$.  By Claim~\ref{claim2color}, the rest of the edges can only be colored by $i$ or $j$, and the number of them is at most $e(G) - |\mathrm{Cr}(S)|$. Hence, the number of $F\in \mathcal{F}(t)$ covered in Case 3.1 is at most
 \begin{equation*}
\binom{\binom{n}{2}}{t}
\cdot r^{\binom{2t}{2}}
\cdot 2^{\frac{2nt}{\log^3 n}}
\cdot r^{2nt}
\cdot 2^{e(G)-|\mathrm{Cr}(S)|}
\leq  2^{O(t\log n)}
\cdot 2^{O(nt)}
\cdot 2^{\binom{n}{2} - \frac{n^2}{4\log^6 n}-\frac{nt}{\log^2 n}}
\leq 2^{\binom{n}{2}- \frac{n^2}{5\log^6 n}},
\end{equation*}
where the last inequality holds by the condition that $nt\leq n\cdot \delta n\log^2n = n^2/\log^9n.$\\

\noindent\textbf{Subcase 3.2: }$e(G)> \binom{n}{2} - \frac{n^2}{4\log^6 n}.$\\
For $u\in A$, define $N_S(u)=\{v\in B \mid uv\in \mathrm{Cr}(S, u)\}$. Let $G_u$ be the induced subgraph of $G$ on $N_S(u)$, and denote by $c(G_u)$ the number of components of $G_u$.
\begin{claim}\label{claim: num_comp}
For every $u\in A$, we have $c(G_u)\leq \frac{n}{\log^3 n}$.
\end{claim}
\noindent\textit{Proof.}
Suppose that there exists a vertex $u$ in $A$ with $c(G_u)>\frac{n}{\log^3 n}$. Then the number of non-edges in $G_u$ is at least $\binom{\frac{n}{\log^3 n}}{2}\geq \frac{n^2}{4\log^6 n}$, which contradicts with the assumption of Case 3.2.
\qed
\begin{claim}\label{claim: num_color}
For every $u\in A$, the number of ways to color $\mathrm{Cr}(S, u)$ is at most $r^{c(G_u)}$.
\end{claim}
\noindent\textit{Proof.} Let $C$ be an arbitrary component of $G_u$. It is sufficient to prove that for every $v, w\in V(C)$, $uv$ and $uw$ must have the same color. Assume that there exist $v, w\in V(C)$ such that $uv$ and $uw$ receive different colors. Since $C$ is a connected component of $G_u$, there is a path $P=\{ v=v_0, v_1, v_2, \ldots, v_k=w\}$ in $G_u$,  in which $uv_i$ is painted by a color in $S$ for every $0\leq i\leq k$. Moreover, since $uv$ and $uw$ receive different colors, there exists an integer $0\leq j\leq k-1$ such that $uv_j$ and $uv_{j+1}$ receive different colors. On the other hand, by Claim~\ref{claim2color}, $v_jv_{j+1}$ can only be colored by $i$ or $j$. Therefore, $u, v_j, v_{j+1}$ form a rainbow triangle, which is not allowed in a Gallai $r$-coloring.\qed 

By Claims~\ref{claim: num_comp} and \ref{claim: num_color}, the number of ways to color $\mathrm{Cr}(S, u)$ is at most $r^{\frac{n}{\log^3 n}}$, and therefore the total number of ways to color $\mathrm{Cr}(S)$ is at most $r^{\frac{2nt}{\log^3 n}}$. By Claim~\ref{claim2color}, the rest of the edges can only be colored by $i$ or $j$, and the number of them is at most $e(G) - |\mathrm{Cr}(S)|$. Hence, the number of $F\in \mathcal{F}(t)$ included in Case 3.2 is at most
 \begin{equation*}
\binom{\binom{n}{2}}{t}
\cdot r^{\binom{2t}{2}}
\cdot 2^{\frac{2nt}{\log^3 n}}
\cdot r^{\frac{2nt}{\log^3 n}}
\cdot 2^{e(G)-|\mathrm{Cr}(S)|}
\leq  2^{O(t\log n)}
\cdot 2^{O\left(\frac{nt}{\log^3 n}\right)}
\cdot 2^{\binom{n}{2} -\frac{nt}{\log^2 n}}
\leq 2^{\binom{n}{2}-\frac{n}{2\log^2 n} - 1}.
\end{equation*}

Eventually, we conclude that 
\[
|\mathcal{F}(t)|\leq 2^{\binom{n}{2}-\frac15n} + 2^{\binom{n}{2}-\frac{n^2}{2\log^8 n}} + 2^{\binom{n}{2}-\frac{n}{2\log^2n} -1}\leq 2^{-\frac{n}{2\log^2 n}}2^{\binom{n}{2}}
\] for every $1\leq t < \delta n\log^2 n$.
\end{proof}

Observe that every $r$-coloring of $G$ using at most 2 colors is a Gallai $r$-coloring. Then we immediately obtain the following lemma.
\begin{lemma}\label{0matching}
Let $\mathcal{F}_0$ be the set of $F\in \mathrm{Ga}(\mathcal{F}, G)$ such that $S(F)=\emptyset$. Then $|\mathcal{F}_0|= 2^{e(G)}$.
\end{lemma}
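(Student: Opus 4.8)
The plan is to identify $\mathcal{F}_0$ exactly with the set of all functions $E(G)\to\{i,j\}$ and then count. First I would unpack the definitions: any $F\in\mathrm{Ga}(\mathcal{F},G)$ is by definition an edge-coloring of $G$, and the condition $S(F)=\emptyset$ says that no edge of $G$ receives a color of $S=[r]\setminus\{i,j\}$, i.e.\ every edge of $G$ is colored $i$ or $j$. There are exactly $2^{e(G)}$ such colorings, so it suffices to check the two inclusions: every $F\in\mathcal{F}_0$ is one of these $2^{e(G)}$ colorings (immediate from the previous sentence), and, conversely, every coloring $F:E(G)\to\{i,j\}$ actually lies in $\mathcal{F}_0$.

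For the latter, fix a coloring $F:E(G)\to\{i,j\}$. Since $F$ uses at most two colors, no triangle of $G$ can be rainbow, so $F$ is a Gallai $r$-coloring of $G$. Regard $F$ as an $r$-template of order $n$ in the usual way, i.e.\ $F(e)$ is the singleton $\{i\}$ or $\{j\}$ for $e\in E(G)$ and $F(e)=\emptyset$ for $e\notin E(G)$. I would then exhibit the witnessing container: let $P$ be the $r$-template with $P(e)=\{i,j\}$ for every $e\in E(K_n)$. All $\binom{n}{2}$ edges of $P$ carry the palette $\{i,j\}$, and since $k\delta\ge 0$ we have $\binom{n}{2}\ge(1-k\delta)\binom{n}{2}$, so $P\in\mathcal{F}=\mathcal{F}(i,j)$. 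Moreover $F(e)\subseteq\{i,j\}=P(e)$ for every $e\in E(K_n)$ (including non-edges of $G$, where $F(e)=\emptyset$), so $F\subseteq P$. Hence $F\in\mathrm{Ga}(\mathcal{F},G)$, and trivially $S(F)=\emptyset$, so $F\in\mathcal{F}_0$.

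Combining the two inclusions shows $\mathcal{F}_0$ is precisely the set of colorings $E(G)\to\{i,j\}$, and therefore $|\mathcal{F}_0|=2^{e(G)}$. I do not expect any real obstacle: the only points requiring a little care are the template/coloring bookkeeping (singletons on edges of $G$, $\emptyset$ on non-edges) and the observation that the universal two-color template $P\equiv\{i,j\}$ already satisfies the $(1-k\delta)\binom{n}{2}$ palette requirement in the definition of $\mathcal{F}$ with room to spare, so it serves as a container for all of these colorings at once.
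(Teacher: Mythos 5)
Your proof is correct and matches the paper's (one-line) argument: the paper simply observes that every coloring of $G$ using only the colors $i,j$ is automatically a Gallai $r$-coloring, from which the equality $|\mathcal{F}_0|=2^{e(G)}$ is immediate. Your write-up just makes explicit the implicit container witness $P\equiv\{i,j\}\in\mathcal{F}$, so it is the same approach, carried out in more detail.
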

Now, we have all the ingredients to prove Theorem~\ref{2colorclass}.
\\

\noindent\textit{Proof of Theorem~\ref{2colorclass}.}
Applying Lemmas~\ref{lmatching}, ~\ref{tmatching} and~\ref{0matching}, we obtain that
$$|\mathrm{Ga}(\mathcal{F}, G)|= |\mathcal{F}_1| + \sum_{t=1}^{\delta n/\log^2 n}|\mathcal{F}(t)|+|\mathcal{F}_0| \leq 2^{e(G)}+2^{-\frac{n}{3\log^2 n}}2^{\binom{n}{2}},$$
for $n$ sufficiently large.
\qed

\section{Gallai $r$-colorings of complete graphs}

%

\subsection{Stability of the Gallai $r$-template of complete graphs}\label{sec:complete:property}

\begin{prop}\label{less3color}
Let $n$, $r\in \mathbb{N}$ with $r\geq 3$. Suppose $P$ is a Gallai $r$-template of $K_n$. Then the number of edges with at least 3 colors in its palette is at most $n^{-1/6}n^2$.
\end{prop}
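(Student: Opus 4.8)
The plan is to argue by contradiction: if too many edges of $P$ carried a palette of size at least $3$, then the triangles of $K_n$ would host so many rainbow subtemplates that $\mathrm{RT}(P)$ would exceed the threshold $n^{-1/3}\binom n3$ permitted by the definition of a Gallai $r$-template.

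Let $H$ be the graph on $[n]$ whose edges are precisely those $e\in E(K_n)$ with $|P(e)|\ge 3$, and set $b=e(H)$; suppose for contradiction that $b> n^{-1/6}n^2$. The key observation is that every path $uvw$ of length two in $H$ (a \emph{cherry}, with center $v$) yields at least one rainbow-triangle subtemplate of $P$ supported on the triple $\{u,v,w\}$: since $P$ is a template of $K_n$ we have $P(uw)\ne\emptyset$, so pick $c_1\in P(uw)$; since $|P(uv)|\ge 3$ we may pick $c_2\in P(uv)\setminus\{c_1\}$; and since $|P(vw)|\ge 3$ we may pick $c_3\in P(vw)\setminus\{c_1,c_2\}$. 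Then $c_1,c_2,c_3$ are distinct, so assigning them to $uw,uv,vw$ (and $\emptyset$ elsewhere) gives a rainbow-triangle subtemplate of $P$. Rainbow-triangle subtemplates supported on distinct triples are distinct functions, and a triple carries at least one cherry exactly when at least two of its three edges lie in $H$; since a triple with all three edges in $H$ corresponds to exactly three cherries, the number of triples with at least two edges in $H$ is at least $\tfrac13\sum_{v}\binom{d_H(v)}{2}$. Hence
\[
\mathrm{RT}(P)\ \ge\ \frac13\sum_{v\in[n]}\binom{d_H(v)}{2}.
\]

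To conclude, expand $\binom{d_H(v)}{2}$ and apply Cauchy--Schwarz to $\sum_v d_H(v)=2b$, giving $\sum_v\binom{d_H(v)}{2}\ge \tfrac{2b^2}{n}-b$. With $b> n^{-1/6}n^2=n^{11/6}$ we get $\tfrac{2b^2}{n}> 2n^{8/3}$, which dominates the linear term $b=n^{11/6}=o(n^{8/3})$, so $\mathrm{RT}(P)\ge(\tfrac23-o(1))n^{8/3}$. This is larger than $n^{-1/3}\binom n3\le \tfrac16 n^{8/3}$ once $n$ is large, contradicting $\mathrm{RT}(P)\le n^{-1/3}\binom n3$ and completing the proof.

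I do not anticipate a serious obstacle: the argument is a one-page double count. The only points needing care are the bookkeeping in the cherry count (ensuring the constructed subtemplates on distinct triples are genuinely distinct and that triangles of $H$ are not overcounted) and checking that the exponents line up so $\tfrac{2b^2}{n}$ really beats the threshold with room to spare; both are routine.
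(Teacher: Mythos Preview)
Your proof is correct and follows essentially the same approach as the paper's: both argue by contradiction, consider the subgraph $H$ of edges with palette size at least $3$, count cherries (paths of length two) in $H$ via convexity/Cauchy--Schwarz, observe that every such cherry forces a rainbow-triangle subtemplate on its triple, and compare with the bound $\mathrm{RT}(P)\le n^{-1/3}\binom{n}{3}$. The paper's arithmetic is marginally cleaner (it bounds $\sum_v\binom{d_v}{2}\ge |E|^2/n$ directly and lands on $>3n^{-1/3}\binom{n}{3}$ cherries without an asymptotic $o(1)$), and in fact the inequalities hold for all $n$, so your phrase ``once $n$ is large'' is unnecessary---but this is cosmetic.
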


\begin{proof}
Let $E=\{e\in E(K_n):\ |P(e)|\geq 3\}$ and assume that $|E|>n^{-1/6}n^2$.
Let $F$ be a spanning subgraph of $K_n$ with edge set $E$. For every $i\in [n]$, denote by $d_i$ the degree of vertex $i$ of $F$. Then the number of 3-paths in $F$ is equal to 
$$\sum_{i\in [n]}\binom{d_i}{2}
\geq n \binom{\frac{\sum_{i\in [n]}d_i}{n}}{2}
\geq n\binom{2|E|/n}{2}\geq \frac{|E|^2}{n}>3n^{-1/3}\binom{n}{3}.$$
Observe that if $i, j, k$ is a 3-path in $F$, then there is at least one rainbow triangle in $P$ with vertex set $\{i, j, k\}$ since edges $ij$, $jk$ have at least 3 colors in its palette and edge $ik$ has at least one color in its palette. Therefore, there would be more than $n^{-1/3}\binom{n}{3}$ rainbow triangles in $P$, which contradicts the fact that $P$ is a Gallai $r$-template.
\end{proof}

\begin{lemma}\label{baltri}
 Let $n$, $r\in \mathbb{N}$ with $r\geq 3$ and $n^{-1/6} \ll \delta \ll 1$. Assume that $P$ is a Gallai $r$-template of $K_n$ with $|\mathrm{Ga}(P, K_n)|>2^{(1- \delta)\binom{n}{2}}$. Then the number of triangles $T$ of $K_n$ with $\sum_{e\in T}|P(e)|=6$ and $P(e)=P(e')$ for every $e$, $e'\in T$ is at least $(1 - 4\delta)\binom{n}{3}$.
\end{lemma}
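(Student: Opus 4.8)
The plan is to bound $|\mathrm{Ga}(P,K_n)|$ accurately enough to force almost all palettes of $P$ to coincide, and then translate this into a statement about triangles.

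\textbf{Step 1: palette sizes.} Each Gallai $r$-colouring of $K_n$ refining $P$ chooses one of the $|P(e)|$ colours at every edge $e$, so $|\mathrm{Ga}(P,K_n)|\le\prod_{e\in E(K_n)}|P(e)|=2^{\sum_e\log|P(e)|}$. By Proposition~\ref{less3color} at most $n^{-1/6}n^2$ edges have $|P(e)|\ge 3$, and since $n^{-1/6}\ll\delta$ these contribute only $o(\delta n^2)$ to $\sum_e\log|P(e)|$, while edges with $|P(e)|\le 1$ contribute $0$ and those with $|P(e)|=2$ contribute $1$. Hence the hypothesis $|\mathrm{Ga}(P,K_n)|>2^{(1-\delta)\binom n2}$ already forces all but $(1+o(1))\delta\binom n2$ edges to have a palette of size exactly $2$.

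\textbf{Step 2: the popular palette.} Let $p^\ast\in\binom{[r]}{2}$ occur as a palette of $P$ for the largest number of edges; after relabelling colours assume $p^\ast=\{1,2\}$, and call $e$ a \emph{$p^\ast$-edge} if $P(e)=\{1,2\}$ and a \emph{defective edge} otherwise. The crucial observation is that if a Gallai $r$-colouring $c$ refining $P$ satisfies $c(uv)\notin\{1,2\}$ for some edge $uv$, then for every $w$ for which both $uw$ and $vw$ are $p^\ast$-edges we must have $c(uw)=c(vw)$, since otherwise $\{c(uv),c(uw),c(vw)\}$ would be a rainbow triple; thus colouring an edge outside $\{1,2\}$ \emph{identifies} its two endpoints on all but few of the $p^\ast$-edges at them. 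I would count colourings in $\mathrm{Ga}(P,K_n)$ by first choosing the set $S$ of edges that receive a colour outside $\{1,2\}$ — necessarily $S$ contains every defective edge whose palette is disjoint from $\{1,2\}$, while a defective edge meeting $\{1,2\}$ in exactly one colour has a unique safe colour and contributes a factor $1$ when omitted from $S$ — and then, for each $S$, colouring the $p^\ast$-edges subject to being constant along every identification class forced by $S$. Identifying along a graph $S$ that merges the $n$ vertices into $c$ classes removes at least $\binom n2-\binom c2\ge\tfrac12(n-c)n$ of the $p^\ast$-edge degrees of freedom, with $n-c\ge 1$ whenever $S\neq\emptyset$; bounding the number of admissible $S$ and the colourings on $S$, the contributions with $S\neq\emptyset$ form only a $2^{-\Omega(n)}$ fraction of the term $S=\emptyset$, which itself is at most $2^{\#\{e:\,P(e)=\{1,2\}\}+o(\delta\binom n2)}$. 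Therefore $|\mathrm{Ga}(P,K_n)|\le 2^{\#\{e:\,P(e)=\{1,2\}\}+o(\delta\binom n2)}$.

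\textbf{Step 3: from edges to triangles.} Plugging this into the hypothesis yields $\#\{e:\,P(e)\neq\{1,2\}\}\le(1+o(1))\delta\binom n2$ (which in particular re-proves Step~1). Now every triangle all three of whose edges are $p^\ast$-edges satisfies $\sum_{e\in T}|P(e)|=6$ with all three palettes equal to $\{1,2\}$, so it is enough to bound the number of triangles meeting at least one defective edge. Each defective edge lies in exactly $n-2$ triangles, hence this number is at most $(1+o(1))\delta\binom n2\,(n-2)=(3+o(1))\delta\binom n3\le 4\delta\binom n3$ for $n$ large, which is precisely the claim.

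\textbf{Main obstacle.} The delicate point is the summation over $S$ in Step~2. One has to control simultaneously the number of candidate sets $S$ and how strongly the forced identifications overlap: the worst case is when the defective edges, and hence $S$, concentrate on a small vertex set, so that many identifications coalesce into one class and one saves only $\approx|V(S)|\cdot n$ rather than $\approx|S|\cdot n$ $p^\ast$-edge degrees of freedom, while a crude bound like $r^{|S|}$ for colouring $S$ is large. Verifying that the superexponential saving still dominates these losses over the whole range of $|S|$ is exactly where the assumption $n^{-1/6}\ll\delta$ is used — it makes $\delta\binom n2$ swamp any $\mathrm{poly}(n)$ loss — and it is also where, for dense $S$, one must replace the bound $r^{|S|}$ by the much smaller number of colourings of $S$ with the $r-2$ colours outside $\{1,2\}$ that contain no rainbow triangle.
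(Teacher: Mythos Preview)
Your route is genuinely different from the paper's, and considerably harder. The paper proves this lemma in a few lines by a direct entropy/triangle count: writing
\[
|\mathrm{Ga}(P,K_n)|\le\prod_{e}|P(e)|=\Bigl(\prod_{T}\prod_{e\in T}|P(e)|\Bigr)^{1/(n-2)},
\]
it partitions the triangles of $K_n$ according to their palette profile. The one observation you are missing is that if a triangle $T$ has all three palettes of size at most $2$, $\sum_{e\in T}|P(e)|=6$, and the three palettes are \emph{not} all equal, then one can select one colour from each palette to form a rainbow triple; hence the number of such $T$ is at most $\mathrm{RT}(P)\le n^{-1/3}\binom{n}{3}$. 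Together with Proposition~\ref{less3color} this makes every triangle class except $\mathcal{T}_1$ negligible, and the product bound above then forces $|\mathcal{T}_1|\ge(1-4\delta)\binom{n}{3}$ directly. No enumeration over colourings, no choice of a popular palette, and no summation over $S$ is needed.

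Your Step~2 is effectively an attempt to prove the stronger Theorem~\ref{2edge} (existence of a dominant palette) from scratch, and the summation over $S$ you outline is essentially the content of Theorem~\ref{2colorclass}, which in the paper occupies all of Section~3 and is only applied \emph{after} the dominant palette has been established via Lemma~\ref{baltri} and Lemma~\ref{monoedge}. Your sketch does not carry this summation out, and the ``main obstacle'' paragraph correctly identifies the difficulty without resolving it. In particular, nothing in your Step~2 uses the Gallai-template hypothesis $\mathrm{RT}(P)\le n^{-1/3}\binom{n}{3}$; you use only that the \emph{colouring} is rainbow-free. But without that hypothesis there is no reason a priori why the $2$-palettes cannot be spread roughly evenly over $\binom{[r]}{2}$, in which case the most popular $p^\ast$ covers only a constant fraction of the edges, the forced part of $S$ can have size $\Theta(n^2)$, and your identification saving need not beat the entropy of choosing and colouring $S$. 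The paper sidesteps all of this: it uses the rainbow-triangle bound on $P$ exactly once, via the observation above, and the lemma drops out.
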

\begin{proof}
Let $\mathcal{T}$ be the collection of triangles of $K_n$. We define
\begin{equation*}
\arraycolsep=1.4pt\def\arraystretch{1.2}
\begin{array}{ll}
\mathcal{T}_1&=\left\{T \in \mathcal{T} \mid \textstyle\sum_{e\in T}|P(e)|=6 \text{ and } P(e)=P(e')\text{ for every }e,~e'\in T\right\},\\
\mathcal{T}_2&=\left\{T \in \mathcal{T}\mid \exists~e\in T, \ |P(e)|\geq 3\right\},\\
\mathcal{T}_3&=\left\{T \in \mathcal{T}\setminus (\mathcal{T}_1 \cup \mathcal{T}_2) \mid \textstyle\sum_{e\in T}|P(e)| = 6\right\},\\
\mathcal{T}_4&=\left\{T \in \mathcal{T}\setminus \mathcal{T}_2 \mid \textstyle\sum_{e\in T}|P(e)|\leq 5\right\}.
\end{array}
\end{equation*}
Let $|\mathcal{T}_1|=\alpha\binom{n}{3}$, $|\mathcal{T}_2|=\beta\binom{n}{3}$, $|\mathcal{T}_3|=\gamma\binom{n}{3}$. Then $|\mathcal{T}_4| \leq(1 - \alpha)\binom{n}{3}$.
By Proposition~\ref{less3color}, we have $|\mathcal{T}_2|\leq n^{-1/6}n^3$ and therefore $\beta\leq 12n^{-1/6}$. Observe that for every $T\in \mathcal{T}_3$, the template $P$ contains a rainbow triangle with edge set $T$; therefore, we obtain that $|\mathcal{T}_3|\leq \mathrm{RT}(P)\leq n^{-1/3}\binom{n}{3}$, which gives $\gamma \leq n^{-1/3}\leq n^{-1/6}$.

Assume that $\alpha< 1-4\delta$. Then the number of Gallai $r$-colorings of $K_n$, which are subtemplates of $P$,  satisfies 
\begin{equation*}
\arraycolsep=1.4pt\def\arraystretch{1.2}
\begin{array}{ll}
\log |\mathrm{Ga}(P, K_n)| & \leq \log \left(\prod_{e\in E(K_n)}|P(e)|\right) = \log \left(\prod_{T\in \mathcal{T}}\prod_{e\in T}|P(e)|  \right)^{\frac{1}{n-2}} \\
& \leq \log\left(\prod_{T\in \mathcal{T}_1}2^3 \prod_{T\in \mathcal{T}_2}r^3 \prod_{T\in \mathcal{T}_3}2^3 \prod_{T\in \mathcal{T}_4}2^2\right)\cdot \frac{1}{n-2}\\
&\leq \left(3\alpha + 3\beta\log r  + 3\gamma + 2(1-\alpha)\right)\frac13\binom{n}{2}\\
&\leq \left(2 +\alpha + (36\log r +3)n^{-1/6}\right)\frac13\binom{n}{2}
< \left(2 + (1-4\delta) + \delta\right)\frac13\binom{n}{2}
= (1- \delta)\binom{n}{2}.
\end{array}
\end{equation*}
This contradicts the assumption that $|\mathrm{Ga}(P, K_n)|>2^{(1- \delta)\binom{n}{2}}$.
\end{proof}

We now prove a stability result for Gallai $r$-templates of $K_n$.
\begin{thm}\label{2edge}
 Let $n$, $r\in\mathbb{N}$ with $r\geq 3$ and $n^{-1/6} \ll \delta \ll 1$. Assume that $P$ is a Gallai $r$-template of $K_n$ with $|\mathrm{Ga}(P, K_n)|>2^{(1- \delta)\binom{n}{2}}$. Then there exist two colors $i$, $j\in [r]$ such that the number of edges of $K_n$ with palette $\{i, j\}$ is at least $(1- 4r^4\delta)\binom{n}{2}$.
\end{thm}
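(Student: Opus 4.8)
The plan is to combine the counting result of Lemma~\ref{baltri} with Lemma~\ref{monoedge}. By Lemma~\ref{baltri}, since $|\mathrm{Ga}(P,K_n)| > 2^{(1-\delta)\binom{n}{2}}$, the number of triangles $T$ of $K_n$ with $\sum_{e\in T}|P(e)| = 6$ and $P(e) = P(e')$ for all $e,e'\in T$ is at least $(1-4\delta)\binom{n}{3}$. Each such triangle has all three edges sharing a common $2$-element palette; in particular, every edge of such a triangle has $|P(e)| = 2$. My first step would be to pass from the template $P$ to an honest $r$-coloring to which Lemma~\ref{monoedge} applies. The natural choice is, for each edge $e$, to pick \emph{some} color from $P(e)$ (say, the smallest); call the resulting $r$-coloring $\chi$ of $K_n$. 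The point is that every triangle $T$ counted by Lemma~\ref{baltri} becomes a \emph{monochromatic} triangle of $\chi$: since $P(e) = P(e')$ for all edges of $T$, the color selection rule assigns the same color to all three edges. Hence $\chi$ has at least $(1-4\delta)\binom{n}{3}$ monochromatic triangles.

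Next I would invoke Lemma~\ref{monoedge} with $\varepsilon = 4\delta$ (which satisfies $\frac4n - \frac{4}{n^2}\le \varepsilon < \frac12$ for $n$ large and $\delta$ small). It yields a color $c$ such that at least $e(K_n) - 4r^2(4\delta)\binom{n}{2} = \binom{n}{2} - 16r^2\delta\binom{n}{2}$ edges receive color $c$ in $\chi$; that is, at least $(1-16r^2\delta)\binom{n}{2}$ edges of $K_n$ have $c \in P(e)$. Call this set of edges $E_c$. Now I need a \emph{second} color. The idea is to look at the sub-template restricted to $E_c$ together with the triangle-rich structure: among the triangles from Lemma~\ref{baltri} that lie entirely inside $E_c$ — of which there are at least $(1-4\delta)\binom{n}{3} - 3\cdot 16r^2\delta\binom{n}{3} \ge (1 - O(r^2\delta))\binom{n}{3}$ (each edge outside $E_c$ kills at most $n-2$ triangles, so at most $16r^2\delta\binom{n}{2}(n-2) = O(r^2\delta)\binom{n}{3}$ such triangles are destroyed) — each such triangle has all three edges with the \emph{same} $2$-element palette $\{c, d\}$ for some $d$. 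This partitions (most of) these triangles according to the second color $d$; by pigeonhole, for some fixed color $j \ne c$, at least a $1/(r-1)$ fraction have palette exactly $\{c, j\}$ on all three edges.

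The remaining work is to bootstrap this positive-fraction statement about triangles into the claimed $(1 - 4r^4\delta)\binom{n}{2}$ bound on the number of \emph{edges} with palette exactly $\{c,j\}$. Here I would re-run a counting/entropy argument in the spirit of Lemma~\ref{baltri}: let $A$ be the set of edges with palette $\{c,j\}$ and suppose $|A| < (1 - 4r^4\delta)\binom{n}{2}$. Split triangles of $K_n$ by how many of their edges lie in $A$, and bound $\log|\mathrm{Ga}(P, K_n)| \le \frac{1}{n-2}\sum_T \sum_{e\in T}\log|P(e)|$ as in Lemma~\ref{baltri}: triangles with all three edges in $A$ contribute $3$ each, triangles with an edge of palette size $\ge 3$ contribute at most $3\log r$ (but there are at most $n^{-1/6}n^3$ of them by Proposition~\ref{less3color}), and the remaining triangles — those with at least one edge not in $A$ but all palettes of size $\le 2$ — contribute at most $3 - \Omega(1)$ on average because such a triangle has an edge $e\notin A$ with $|P(e)|\le 2$, and if $|P(e)| = 2$ with $P(e)\ne\{c,j\}$ one shows (using that most triangles are monochromatic-type from Lemma~\ref{baltri}, hence rainbow-free forces the other edges) a genuine saving. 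The main obstacle is exactly this last bookkeeping: converting ``few triangles entirely in $A$'' into ``$\log|\mathrm{Ga}(P,K_n)|$ too small'' requires carefully controlling the overlap between the $A$-deficient triangles and the triangles guaranteed by Lemma~\ref{baltri}, and making the constants work out to the stated $4r^4\delta$. I expect the cleanest route is actually to bypass the per-edge argument and instead argue: if fewer than $(1-4r^4\delta)\binom{n}{2}$ edges have palette $\{c,j\}$, then the number of triangles with all three edges of palette $\{c,j\}$ is at most (by a Kruskal–Katona / convexity bound as in Theorem~\ref{trinum}) roughly $(1 - 4r^4\delta)^{3/2}\binom{n}{3} < (1 - O(r^2\delta))\binom{n}{3}$, contradicting the lower bound on monochromatic-type triangles obtained two paragraphs above once the pigeonhole loss of $1/(r-1)$ and the $r^4$ versus $r^2$ slack are reconciled — the factor $r^4$ in the statement is precisely the room needed to absorb the pigeonhole factor $r$ and the Kruskal–Katona exponent $3/2$.
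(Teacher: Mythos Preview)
Your route diverges from the paper's and, as written, does not close. The paper's proof is a one-liner once Lemma~\ref{baltri} is in hand: instead of choosing a \emph{representative colour} from each palette, it treats the $2$-element palettes themselves as colours. That is, it defines a $\binom{r}{2}$-coloured graph $G$ on the edge set $\{e:|P(e)|=2\}$, where the ``colour'' of $e$ is the pair $P(e)$. Every triangle produced by Lemma~\ref{baltri} is then \emph{literally} monochromatic in this $\binom{r}{2}$-colouring, so a single application of Lemma~\ref{monoedge} with $r$ replaced by $\binom{r}{2}$ and $\varepsilon=4\delta$ gives a dominant pair $\{i,j\}$ directly, with error $4\binom{r}{2}^2\cdot 4\delta+4\delta\le 4r^4\delta$. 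No second colour needs to be extracted.

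Your approach --- project to an $r$-colouring $\chi$ via ``smallest colour in $P(e)$'', apply Lemma~\ref{monoedge} to find a dominant $c$, then hunt for the second colour --- stalls at the last step. After pigeonhole on the second coordinate you only know that \emph{some} $j\neq c$ accounts for at least a $\tfrac{1}{r-1}$ fraction of the $\mathcal{T}_1$-triangles inside $E_c$, i.e.\ at least $\tfrac{1}{r-1}(1-O(r^2\delta))\binom{n}{3}$ triangles have all three palettes equal to $\{c,j\}$. But the Kruskal--Katona upper bound you invoke, namely that $\le(1-4r^4\delta)\binom{n}{2}$ edges with palette $\{c,j\}$ forces $\le(1-4r^4\delta)^{3/2}\binom{n}{3}\approx(1-6r^4\delta)\binom{n}{3}$ such triangles, is still essentially $\binom{n}{3}$; there is no contradiction with a lower bound of order $\tfrac{1}{r-1}\binom{n}{3}$. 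The factor $1/(r-1)$ is a loss on the \emph{main term}, not on the error term, so no polynomial-in-$r$ slack in front of $\delta$ can absorb it. Your entropy alternative (a) is not carried out either, and faces the same obstacle: with only a $1/(r-1)$ fraction of triangles pinned down, the per-triangle averaging cannot force $\log|\mathrm{Ga}(P,K_n)|$ below $(1-\delta)\binom{n}{2}$.

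A fix in your framework would be to apply Lemma~\ref{monoedge} a \emph{second} time rather than pigeonhole: on the graph of edges $e$ with $c\in P(e)$, $|P(e)|=2$, colour $e$ by the element of $P(e)\setminus\{c\}$; the $\mathcal{T}_1$-triangles with $c$ in their common palette are monochromatic there, and a Kruskal--Katona count on the $\le 16r^2\delta\binom{n}{2}$ edges outside $E_c$ shows almost all $\mathcal{T}_1$-triangles are of this type. But this is two applications of Lemma~\ref{monoedge} plus bookkeeping, whereas the paper's palette-as-colour trick needs only one.
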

\begin{proof}
Let $G$ be an $\binom{r}{2}$-colored graph with edge set $E(G) = \{e\in E(K_n) \mid |P(e)|=2\}$ and color set $\{(i, j) \mid 1\leq i< j\leq r\}$, where each edge $e$ is colored by color $P(e)$. 
By Lemma~\ref{baltri}, the number of monochromatic triangles in $G$ is at least $(1-4\delta)\binom{n}{3}$. Applying Lemma \ref{monoedge} on $G$, we obtain that there exist two colors $i$, $j$ such that  the number of edges with palette $\{i, j\}$ is at least $e(G) - 4\binom{r}{2}^2 \cdot 4\delta\binom{n}{2}\geq (1-4\delta)\binom{n}{2} - 4\binom{r}{2}^2 \cdot 4\delta\binom{n}{2} \geq (1- 4r^4\delta)\binom{n}{2}$. 
\end{proof}

\subsection{Proof of Theorem~\ref{rcomplete}}
\noindent\textit{Proof of Theorem~\ref{rcomplete}.}
Let $\mathcal{C}$ be the collection of containers given by Theorem \ref{container}. We observe that a Gallai $r$-coloring of $K_n$ can be regarded as a rainbow triangle-free $r$-coloring template of order $n$, with only one color allowed at each edge. Therefore, by Property (i) of Theorem \ref{container}, every Gallai $r$-coloring of $K_n$ is a subtemplate of some $P\in \mathcal{C}$.

Let $\delta = \log^{-11} n$. We define
\[
\mathcal{C}_1 = \left\{P\in \mathcal{C} :\ |\mathrm{Ga}(P, K_n)|\leq 2^{(1- \delta)\binom{n}{2}}\right\},
\hspace{3mm}
\mathcal{C}_2 = \left\{P\in \mathcal{C} :\ |\mathrm{Ga}(P, K_n)|> 2^{(1- \delta)\binom{n}{2}}\right\}.
\]
By Property (iii) of Theorem~\ref{container}, we have 
$$|\mathrm{Ga}(\mathcal{C}_1, K_n)|\leq |\mathcal{C}_1|\cdot 2^{(1-\delta)\binom{n}{2}}\leq 2^{c n^{-1/3}\log^2 n\binom{n}{2}}\cdot2^{\binom{n}{2} - \log^{-11}n \binom{n}{2}}\leq 2^{-\frac{n^2}{4\log^{11} n}}2^{\binom{n}{2}}.$$

We claim that every template $P$ in $\mathcal{C}_2$ is a Gallai $r$-template of $K_n$. 
First, by Property (ii) of Theorem~\ref{container}, we have $\mathrm{RT}(P)\leq n^{-1/3}\binom{n}{3}$. Suppose that there exists an edge $e\in E(K_n)$ with $|P(e)|=0$. Then we would obtain $\mathrm{Ga}(P, K_n) = \emptyset$ as a Gallai $r$-coloring of $K_n$ requires at least one color on each edge, which contradicts the definition of $\mathcal{C}_2$. 
Now by Theorem~\ref{2edge}, we can divide $\mathcal{C}_2$ into classes $\{\mathcal{F}_{i, j}, 1\leq i<j\leq r\}$, where $\mathcal{F}_{i, j}$ consists of all the $r$-templates in $\mathcal{C}_2$ which contain at least $(1-4r^4\delta)\binom{n}{2}$ edges with palette $\{i, j\}$. Applying Theorem~\ref{2colorclass} on $\mathcal{F}_{i, j}$, we obtain that $|\mathrm{Ga}(\mathcal{F}_{i, j}, K_n)|\leq \left(1+2^{-\frac{n}{3\log^2 n}}\right)2^{\binom{n}{2}}$, and therefore
\begin{equation*}
|\mathrm{Ga}(\mathcal{C}_2, K_n)|\leq \sum_{1\leq i<j\leq r}|\mathrm{Ga}(\mathcal{F}_{i, j}, K_n)|\leq\binom{r}{2}\left(1+2^{-\frac{n}{3\log^2 n}}\right)2^{\binom{n}{2}}.
\end{equation*}
Finally, we conclude that 
$$|\mathrm{Ga}(\mathcal{C}, K_n)|= |\mathrm{Ga}(\mathcal{C}_1, K_n)| + |\mathrm{Ga}(\mathcal{C}_2, K_n)|\leq \left(\binom{r}{2}+2^{-\frac{n}{4\log^2 n}}\right)2^{\binom{n}{2}},$$ which gives the desired upper bound for the number of Gallai $r$-colorings of $K_n$. 
\qed
\section{Gallai $3$-colorings of non-complete graphs}
In this section, we count Gallai 3-colorings of dense non-complete graphs. We shall explore the stability property first, and then follow a somewhat similar strategy as in the proof of Theorem~\ref{rcomplete}.
The main obstacle is that in a Gallai $r$-template of a non-complete graph, a palette of an edge could be an empty set, which leads to a more sophisticated discussion of templates.

\subsection{Triangles in $r$-templates of dense graphs}
Let $\mathcal{T}$ be the collection of triangles of $K_n$. For a given $r$-template $P$ of order $n$, we partition the triangles into 5 classes. We set an extra class, as a $T\in \mathcal{T}$ may not be a triangle in $G$.
\begin{equation}\label{eq: tripartition}
\arraycolsep=1.4pt\def\arraystretch{1.2}
\begin{array}{ll}
\mathcal{T}_1(P)&=\left\{T \in \mathcal{T} \mid \textstyle\sum_{e\in T}|P(e)|=6 \text{ and } P(e)=P(e')\text{ for every }e,~e'\in T\right\},\\
\mathcal{T}_2(P)&=\left\{T \in \mathcal{T} \mid T=\{e_1, e_2, e_3\},~|P(e_1)|\geq 3,~|P(e_2)|\geq 3,~\text{and}~|P(e_3)|=0\right\},\\
\mathcal{T}_3(P)&=\left\{T \in \mathcal{T} \mid T=\{e_1, e_2, e_3\},~|P(e_1)|\geq 3,~|P(e_2)|+|P(e_3)|\leq 2\right\},\\
\mathcal{T}_4(P)&=\left\{T \in \mathcal{T}\setminus (\mathcal{T}_1\cup \mathcal{T}_2\cup \mathcal{T}_3) \mid  \textstyle\sum_{e\in T}|P(e)| \geq 6\right\},\\
\mathcal{T}_5(P)&=\left\{T \in \mathcal{T}\setminus \mathcal{T}_3 \mid  \textstyle\sum_{e\in T}|P(e)|\leq 5\right\}.
\end{array}
\end{equation}

\begin{lemma}\label{small330}
Let $n, r\in \mathbb{N}$ with $r\geq 4$ and $0< k\leq 1$. For $0<\xi \leq \left(\frac{k}{2 + 6k}\right)^2$, let $G$ be a graph of order $n$, and $e(G)\geq (1-\xi)\binom{n}{2}$. Assume that $P$ is a Gallai $r$-template of $G$. Then, for sufficiently large $n$,
$$|\mathcal{T}_2(P)|\leq \max\left\{k |\mathcal{T}_3(P)|,~\frac{3 + 9k}{k}n^{-\frac13}\binom{n}{3}\right\}.$$
\end{lemma}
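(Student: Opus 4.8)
The idea is to bound $|\mathcal{T}_2(P)|$ by charging its triangles to rainbow triangles of $P$, which are few because $P$ is a Gallai template, unless $\mathcal{T}_3(P)$ is already large. Recall that a triangle in $\mathcal{T}_2(P)$ has two edges with palette of size $\geq 3$ and one edge with empty palette; the empty edge is a non-edge of $G$. The plan is to show that each $T\in\mathcal{T}_2(P)$ can be ``completed'' to many rainbow triangles by pairing it with vertices outside, and then count carefully.

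First, set up the bipartite-type counting. Let $F$ be the spanning subgraph of $K_n$ whose edge set is $E_{\geq 3}=\{e\in E(K_n):|P(e)|\geq 3\}$, and let $\bar G=K_n\setminus G$ be the non-edge graph, so $e(\bar G)\leq \xi\binom{n}{2}$. A triangle in $\mathcal{T}_2(P)$ is precisely a triangle with two edges in $F$ and the third edge in $\bar G$. For such a triangle on vertices $\{u,v,w\}$ with $uv\notin E(G)$ and $uw,vw\in E_{\geq 3}$, consider extending along the ``apex'' $w$: for any fourth vertex $x$, the edge $wx$ together with the 3-path $u$-$w$-$v$ gives, as in Proposition~\ref{less3color}, candidate rainbow triangles $\{u,w,x\}$ and $\{v,w,x\}$. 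More directly, count 3-paths of the form $u$-$w$-$v$ with $uw,vw\in E_{\geq 3}$ (a ``cherry'' centered at $w$ in $F$): such a cherry is a rainbow triangle of $P$ unless $uv$ has palette of size $0$ — and in that case $\{u,v,w\}\in\mathcal{T}_2(P)$ — or $uv$ has palette of size $1$ or $2$ — in which case $\{u,v,w\}\in\mathcal{T}_3(P)$ (up to the case distinction on which edge is the ``big'' one). So, writing $d_F(w)$ for the degree of $w$ in $F$,
\[
\sum_{w\in[n]}\binom{d_F(w)}{2}\;\leq\; |\mathcal{T}_2(P)| + 3|\mathcal{T}_3(P)| + \mathrm{RT}(P),
\]
where the factor $3$ accounts for the fact that a triangle of $\mathcal{T}_3(P)$ may be recorded as a cherry from more than one of its vertices, and $\mathrm{RT}(P)\leq n^{-1/3}\binom{n}{3}$ by the definition of a Gallai template.

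Second, lower-bound the left side in terms of $|\mathcal{T}_2(P)|$ itself. Every $T\in\mathcal{T}_2(P)$ contributes to $\sum_w\binom{d_F(w)}{2}$ via its apex; more usefully, I want a lower bound on $\sum_w\binom{d_F(w)}{2}$ that dominates $|\mathcal{T}_2(P)|$. By convexity (Jensen), $\sum_w\binom{d_F(w)}{2}\geq n\binom{2e(F)/n}{2}\geq e(F)^2/n - e(F)$. On the other hand, each triangle of $\mathcal{T}_2(P)$ uses two edges of $F$ and one edge of $\bar G$, so $|\mathcal{T}_2(P)|\leq e(F)\cdot\Delta(\bar G)$ is too weak; instead count incidences: $|\mathcal{T}_2(P)|\leq e(\bar G)\cdot n \leq \xi\binom{n}{2}n$, which combined with the choice of $\xi$ will let the ``large $\mathcal{T}_3$'' alternative or the $n^{-1/3}\binom{n}{3}$ term absorb everything. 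The cleaner route: if $|\mathcal{T}_2(P)| > k|\mathcal{T}_3(P)|$, then from the displayed inequality $\sum_w\binom{d_F(w)}{2} \leq (1+3/k)|\mathcal{T}_2(P)| + n^{-1/3}\binom{n}{3}$. Combined with $\sum_w\binom{d_F(w)}{2}\geq e(F)^2/n - e(F)$ and with a lower bound $e(F)\geq$ (something) forced by $|\mathcal{T}_2(P)|$ being large — namely, many $F$-edges are needed to support that many $\mathcal{T}_2$-triangles given $e(\bar G)\leq\xi\binom n2$ — one gets a quadratic inequality in $|\mathcal{T}_2(P)|$ (or in $e(F)$) that, using $\xi\leq(k/(2+6k))^2$, forces $|\mathcal{T}_2(P)|\leq \frac{3+9k}{k}n^{-1/3}\binom n3$. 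The precise bookkeeping — relating $|\mathcal{T}_2(P)|$, $e(F)$, and $e(\bar G)$ tightly enough that the constants match $\xi=(k/(2+6k))^2$ — is the main obstacle; everything else is the standard 3-path/cherry-counting argument already used in Proposition~\ref{less3color}, adapted to keep track of the non-edges of $G$.
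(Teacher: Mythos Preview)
Your cherry-counting route has a genuine gap, and it stems from a misreading of the class $\mathcal{T}_3(P)$. You assert that if $uw,vw\in E_{\geq 3}$ and $|P(uv)|\in\{1,2\}$ then $\{u,v,w\}\in\mathcal{T}_3(P)$. That is false: membership in $\mathcal{T}_3(P)$ requires exactly one edge with palette $\geq 3$ and the other two summing to $\leq 2$, whereas your cherry already has two big edges. In fact every cherry with $|P(uv)|\geq 1$ forces a rainbow subtemplate, so the correct inequality is
\[
\sum_{w}\binom{d_F(w)}{2}\;\leq\;|\mathcal{T}_2(P)|+3\,\mathrm{RT}(P),
\]
with no $\mathcal{T}_3$ term at all. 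Combined with the trivial lower bound $|\mathcal{T}_2(P)|\leq \sum_w\binom{d_F(w)}{2}$, this pins the cherry count to within $O(\mathrm{RT})$ of $|\mathcal{T}_2(P)|$; it carries no information about $|\mathcal{T}_3(P)|$. Hence the alternative $|\mathcal{T}_2(P)|\leq k|\mathcal{T}_3(P)|$ cannot be reached along this line, no matter how the ``precise bookkeeping'' is done.

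The paper's argument differs in two essential ways. First, it counts triangles with \emph{at least one} edge in $F$ (not two), via the incidence $|E|(n-2)$: any such triangle not in $\mathcal{T}_2(P)\cup\mathcal{T}_3(P)$ contains a rainbow, so
\[
|E|(n-2)\;\leq\;|\mathcal{T}_3(P)|+2|\mathcal{T}_2(P)|+3\,\mathrm{RT}(P),
\]
and now $\mathcal{T}_3$ enters naturally. Second, it exploits the density hypothesis to bound $|\mathcal{T}_2(P)|$ above by $\frac{k}{1+3k}\,n|E|$: since the empty-palette edge of a $\mathcal{T}_2$-triangle must be a non-edge of $G$, the number of $\mathcal{T}_2$-triangles with apex $w$ is at most $\min\{\binom{d_F(w)}{2},\xi\binom{n}{2}\}$; splitting vertices according to whether $d_F(w)>\sqrt{\xi}\,n$ and using $\sqrt{\xi}\leq \frac{k}{2+6k}$ gives the bound. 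These two inequalities together yield $k|\mathcal{T}_3(P)|\geq |\mathcal{T}_2(P)|$ when $|E|$ is large, and when $|E|$ is small the trivial bound $|\mathcal{T}_2(P)|\leq \tfrac12|E|(n-2)$ gives the second alternative. Your sketch contains neither of these two ingredients.
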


\begin{proof}
Let $E=\{e\in E(K_n):\ |P(e)|\geq3\}$ and $F$ be a spanning subgraph of $K_n$ with edge set $E$. For every $i\in [n]$, denote by $d_i$ the degree of vertex $i$ of $F$. 
Since $\sum_{i=1}^n d_i=2|E|$, the number of vertices with $d_i>\sqrt{\xi}n$ is less than $\frac{2|E|}{\sqrt{\xi}n}$. Therefore, we obtain 
\begin{equation}\label{eq: T2class}
\begin{split}
|\mathcal{T}_2(P)| & \leq \sum_{i=1}^{n}\min\left\{\binom{d_i}{2}, \xi\binom{n}{2}\right\}< \frac{2|E|}{\sqrt{\xi}n}\cdot \frac{\xi n^2}{2} + \sum_{d_i\leq \sqrt{\xi}n}\frac{d_i^2}{2}\\
&\leq \frac{2|E|}{\sqrt{\xi}n}\cdot \frac{\xi n^2}{2} + \frac{2|E|}{\sqrt{\xi}n}\cdot \frac{\xi n^2}{2}
=2|E|\sqrt{\xi}n
 \leq \frac{k}{1 + 3k} n|E|,
\end{split}
\end{equation}
where the third inequality follows from the concavity of the function $x^2$.
The rest of the proof is divided into two cases.\\

\noindent\textbf{Case 1}: $|E|\geq \frac{2 + 6k}{k} n^{-\frac13}\binom{n}{2}$. \\
Consider all triangles of $K_n$ with at least one edge in $E$. Note that if a triangle has at least one edge in $E$ and belongs to neither $\mathcal{T}_3(P)$ nor $\mathcal{T}_2(P)$, then it induces a rainbow triangle in $P$. 
Together with (\ref{eq: T2class}), we have 
\begin{equation*}
\begin{array}{ll}
\arraycolsep=1.4pt\def\arraystretch{1.2}
k |\mathcal{T}_3(P)| & \geq k\left(|E|(n-2) - 2|\mathcal{T}_2(P)| - 3n^{-\frac13}\binom{n}{3}\right)
  \geq k\left(\frac{1 + k}{1 + 3k}n|E| - 2|E| - 3n^{-\frac13}\binom{n}{3}\right)\\
 & = \frac{k}{1 + 3k}n|E| + k\left(\frac{k}{1 + 3k}n|E|- 2|E| - 3n^{-\frac13}\binom{n}{3}\right) \geq \frac{k}{1 + 3k}n|E|\geq |\mathcal{T}_2(P)|,\\
\end{array}
\end{equation*}
where the fourth inequality is given by $|E|\geq \frac{2 + 6k}{k} n^{-\frac13}\binom{n}{2}$ for sufficiently large $n$.
\\

\noindent\textbf{Case 2}: $|E|< \frac{2 + 6k}{k} n^{-\frac13}\binom{n}{2}$. \\
In this case, we have
\begin{equation*}
|\mathcal{T}_2(P)|< \frac12 |E|(n-2) < \frac{3 + 9k}{k}n^{-1/3}\binom{n}{3}.
\end{equation*}
\end{proof}

\subsection{Stability of Gallai $3$-templates of dense non-complete graphs}
\begin{lemma}\label{3graph: baltri}
Let $0<\xi \leq \frac{1}{64}$ and $n^{-1/3} \ll \delta \ll 1$. Let $G$ be a graph of order $n$, and $e(G)\geq (1-\xi)\binom{n}{2}$. Assume that $P$ is a Gallai $3$-template of $G$ with $|\mathrm{Ga}(P, G)|>2^{(1- \delta)\binom{n}{2}}$. Then $|\mathcal{T}_1(P)|\geq (1 - 40\delta)\binom{n}{3}$.
\end{lemma}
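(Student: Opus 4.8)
The plan is to run the argument of Lemma~\ref{baltri} through the finer five‑class partition~(\ref{eq: tripartition}), whose extra classes are there precisely to absorb the triangles of $K_n$ that fail to be triangles of $G$. First I would normalise: replacing $P(e)$ by $\emptyset$ on every non‑edge $e\notin E(G)$ changes neither $\mathrm{Ga}(P,G)$ (a coloring of $G$ already carries $\emptyset$ on non‑edges) nor the Gallai‑template property (it only decreases $\mathrm{RT}$), so assume this and set $q(e)=\max\{|P(e)|,1\}$. Property (i) of a Gallai template gives
\[
|\mathrm{Ga}(P,G)|\le\prod_{e\in E(G)}|P(e)|=\prod_{e\in E(K_n)}q(e)=\Bigl(\prod_{T\in\mathcal T}\prod_{e\in T}q(e)\Bigr)^{1/(n-2)},
\]
where $\mathcal T$ is the triangle set of $K_n$ and every edge lies in $n-2$ of its triangles. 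One checks that for $r=3$ the sets $\mathcal T_1(P),\dots,\mathcal T_5(P)$ of~(\ref{eq: tripartition}) do partition $\mathcal T$ (e.g.\ every $\mathcal T_3$‑triangle has $\sum_{e\in T}|P(e)|\le5$), and that the local product $\prod_{e\in T}q(e)$ is at most $2^3=8$ on $\mathcal T_1$, at most $3\cdot3\cdot1=9$ on $\mathcal T_2$, at most $3\cdot2\cdot1=6$ on $\mathcal T_3$, at most $3^3=27$ on $\mathcal T_4$, and at most $2\cdot2\cdot1=4$ on $\mathcal T_5$. The only classes whose weight beats the balanced weight $8$ are $\mathcal T_2$ and $\mathcal T_4$, so everything hinges on bounding those.

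Next I would control $\mathcal T_4$ and $\mathcal T_2$. A short case analysis over the possible palette‑size profiles $(3,3,3),(3,3,2),(3,3,1),(3,2,2),(3,2,1)$ and $(2,2,2)$ with non‑identical palettes shows every $T\in\mathcal T_4(P)$ carries a rainbow sub‑template, so $|\mathcal T_4(P)|\le\mathrm{RT}(P)\le n^{-1/3}\binom n3$. For $\mathcal T_2$ I would appeal to Lemma~\ref{small330} with $k=1$: its hypothesis $\xi\le(k/(2+6k))^2=\tfrac1{64}$ is exactly what is assumed here, and its proof only manipulates the graph $\{e:|P(e)|\ge3\}$ — the sole step using the colour count, namely that a triangle meeting this graph and lying in neither $\mathcal T_2$ nor $\mathcal T_3$ is rainbow, is easily re‑checked at $r=3$ — so it applies verbatim and yields $|\mathcal T_2(P)|\le\max\{|\mathcal T_3(P)|,\,12\,n^{-1/3}\binom n3\}$.

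Finally, assume for contradiction $|\mathcal T_1(P)|<(1-40\delta)\binom n3$ and write $|\mathcal T_i(P)|=t_i\binom n3$, so $t_2+t_3+t_4+t_5>40\delta$, $t_4\le n^{-1/3}$, and $t_2\le\max\{t_3,\,12 n^{-1/3}\}$. Taking base‑$2$ logarithms above and using $\tfrac1{n-2}\binom n3=\tfrac13\binom n2$ with the weight bounds,
\[
(1-\delta)\binom n2<\log|\mathrm{Ga}(P,G)|\le\tfrac13\bigl(3t_1+(\log 9)t_2+(\log 6)t_3+(\log 27)t_4+2t_5\bigr)\binom n2,
\]
and substituting $t_1=1-t_2-t_3-t_4-t_5$ this rearranges to $3-3\delta<3+(\log(9/8))t_2-(\log(4/3))t_3+(\log(27/8))t_4-t_5$. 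When $t_2\le12n^{-1/3}$, the right side is at most $3-(\log(4/3))(t_3+t_5)+O(n^{-1/3})$ with $t_3+t_5>40\delta-O(n^{-1/3})$; when $t_2\le t_3$, the $t_2,t_3$ terms combine to at most $-(\log(32/27))t_3$, and $t_3+t_5\ge\tfrac12(t_2+t_3+t_5)>19\delta-O(n^{-1/3})$. Since $\log(32/27)>\tfrac15$ and $\log(4/3)>\log(32/27)$, in either case the right side is below $3-\tfrac15\cdot19\delta+O(n^{-1/3})<3-3\delta$ once $n$ is large, the desired contradiction. (The constant $40$ is generous; any constant larger than about $25$ works.)

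The crux — the only place the scheme could break — is the class $\mathcal T_2$: these ``near‑rainbow triangles killed by a missing edge of $G$'' carry weight $9>8$ and are not a priori negligible, so the crude bound $|\mathcal T_2|\le 3\xi\binom n3$ does not suffice; it is precisely to force $|\mathcal T_2|\le|\mathcal T_3|$ (after which the comfortably small weight $6$ of $\mathcal T_3$ makes the arithmetic close) that one needs $\xi\le\tfrac1{64}$ in order to invoke Lemma~\ref{small330} with $k=1$. A secondary nuisance is that Lemma~\ref{small330} is stated only for $r\ge4$, so one must observe that its proof is insensitive to $r=3$.
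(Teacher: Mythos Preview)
Your proof is correct and follows essentially the same route as the paper's: both use the five–class partition~(\ref{eq: tripartition}) with the weight function $w(e)=\max\{|P(e)|,1\}$, bound $|\mathcal T_4|$ via the rainbow–triangle count, invoke Lemma~\ref{small330} with $k=1$ to control $|\mathcal T_2|$ against $|\mathcal T_3|$, and finish with a contradiction argument on the logarithmic inequality. The only differences are organizational: you substitute $t_1=1-\sum_{i\ge2}t_i$ exactly and handle the two cases $t_2\le t_3$ and $t_2\le12n^{-1/3}$ in one stroke each, whereas the paper keeps $\alpha$ free and splits the first case further according to whether $\eta<20\delta$; you also explicitly note (correctly) that Lemma~\ref{small330}, though stated for $r\ge4$, goes through unchanged at $r=3$, a point the paper uses silently.
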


\begin{proof}
Let $|\mathcal{T}_1(P)|=\alpha\binom{n}{3}$, $|\mathcal{T}_2(P)|=\beta\binom{n}{3}$, $|\mathcal{T}_3(P)|=\eta\binom{n}{3}$ and $|\mathcal{T}_4(P)|=\gamma\binom{n}{3}$. Then $|\mathcal{T}_5(P)| \leq(1 - \alpha -\beta -\eta)\binom{n}{3}$.
Observe that for every $T\in \mathcal{T}_4(P)$, the template $P$ contains a rainbow triangle with edge set $T$; therefore, we obtain that $|\mathcal{T}_4(P)|\leq RT(P)\leq n^{-1/3}\binom{n}{3}$, which gives $\gamma \leq n^{-1/3}$.

Define for $e\in E(K_n)$ the weight function
\[
w(e)=
 \begin{cases}
    1 & \text{if $P(e)=\emptyset$}, \\
    |P(e)|  & \text{otherwise}. 
  \end{cases}
\]
Similarly to the proof of Lemma~\ref{baltri}, the number of Gallai $3$-colorings of $G$ which are subtemplates of $P$ satisfies
\begin{equation}\label{eq: common}
\arraycolsep=1.4pt\def\arraystretch{1.2}
\begin{array}{ll}
\log |\mathrm{Ga}(P, G)| & \leq \log\left(\prod_{e\in K_n}|w(e)|\right)
= \log \left(\prod_{T\in \mathcal{T}}\prod_{e\in T}|w(e)| \right)^{\frac{1}{n-2}} \\
& \leq \log \left(\prod_{T\in \mathcal{T}_1}2^3 \prod_{T\in \mathcal{T}_2}3^2 \prod_{T\in \mathcal{T}_3}6 \prod_{T\in \mathcal{T}_4}3^3\prod_{T\in \mathcal{T}_5}2^2 \right)\cdot \frac{1}{n-2}\\
&\leq \left(3\alpha +  2\beta\log 3 + \eta\log 6 + 3\gamma\log 3 + 2(1 - \alpha -\beta -\eta)\right)\frac13\binom{n}{2}\\
&=\left(2 + \alpha +  (2\log 3 - 2)\beta + (\log 6 - 2)\eta + 3n^{-1/3}\log 3\right)\frac13\binom{n}{2}.
\end{array}
\end{equation} 

Let $k=1$. By Lemma~\ref{small330}, we have $\beta\leq \max\{\eta,~12n^{-1/3}\}$. Assume that $\alpha< 1 - 40\delta$. The rest of the proof shall be divided into two cases.
\\

\noindent\textbf{Case 1}: $\beta \leq \eta$. \\
If $\eta < 20\delta$, continuing (\ref{eq: common}) we have 
\begin{equation*}
\arraycolsep=1.4pt\def\arraystretch{1.2}
\begin{array}{ll}
\log |\mathrm{Ga}(P, G)| &\leq \left(2 + \alpha +  \left(2\log 3 + \log 6 - 4\right)\eta + 3n^{-1/3}\log 3\right)\frac13\binom{n}{2}\\
& \leq \left(2 + (1 - 40\delta)+  1.8\cdot 20\delta + \delta\right)\frac13\binom{n}{2}
= (1 - \delta)\binom{n}{2}.
\end{array}
\end{equation*}
Otherwise, together with $\alpha\leq 1  - \beta - \eta$, continuing (\ref{eq: common}) we obtain that
\begin{equation*}
\arraycolsep=1.4pt\def\arraystretch{1.2}
\begin{array}{ll}
\log |\mathrm{Ga}(P, G)| 
&\leq \left(3+  (2\log 3 - 3)\beta + (\log 6 - 3)\eta + 3n^{-1/ 3}\log 3\right)\frac13\binom{n}{2}\\
& \leq \left(3 + \left(2\log 3 + \log 6 - 6\right)\eta + 3n^{-1/3}\log 3 \right)\frac13\binom{n}{2}\\
&\leq \left(3  - 0.2\cdot 20 \delta + \delta\right)\frac13\binom{n}{2}
= (1 - \delta)\binom{n}{2}.
\end{array}
\end{equation*} 

\noindent\textbf{Case 2}: $\beta\leq 12n^{-1/3}$. \\
Together with $\eta\leq 1 - \alpha$ and $\alpha< 1- 40\delta$, continuing (\ref{eq: common}) we have
\begin{equation*}
\arraycolsep=1.4pt\def\arraystretch{1.2}
\begin{array}{ll}
\log |\mathrm{Ga}(P, G)| & \leq \left(2 + \alpha + 2\log 3\cdot 12n^{-1/3} + (\log 6- 2)(1 - \alpha) + 3n^{-1/3}\log 3 \right)\frac13\binom{n}{2}\\
&\leq \left(\log 6 + (3- \log 6)\alpha + 27n^{-1/3}\log 3 \right)\frac13\binom{n}{2}\\
&\leq \left(\log 6 + (3 - \log 6)(1 - 40\delta)+ \delta\right)\frac13\binom{n}{2}
< (1 - \delta)\binom{n}{2}.
\end{array}
\end{equation*} 
Both cases contradict our assumption that $|\mathrm{Ga}(P, G)|>2^{(1- \delta)\binom{n}{2}}$.
\end{proof}

Similarly as in the proof of Theorem~\ref{2edge}, using  Lemmas~\ref{monoedge} and~\ref{3graph: baltri}, we obtain the following  theorem.
\begin{thm}\label{3graph: 2edge}
Let $0<\xi \leq \frac1{64}$ and $n^{-1/3} \ll \delta \ll 1$. Let $G$ be a graph of order $n$ and $e(G)\geq (1-\xi)\binom{n}{2}$. Assume that $P$ is a Gallai $3$-template of $G$ with $|\mathrm{Ga}(P, G)|>2^{(1- \delta)\binom{n}{2}}$. Then there exist two colors $i$, $j\in [3]$ such that the number of edges of $K_n$ with palette $\{i, j\}$ is at least $(1- 37\cdot 40\delta)\binom{n}{2}$.
\end{thm}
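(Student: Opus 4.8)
The plan is to mirror the proof of Theorem~\ref{2edge} almost verbatim, replacing the input Lemma~\ref{baltri} by its non-complete analogue Lemma~\ref{3graph: baltri} and tracking the resulting constants. The set-up: given a Gallai $3$-template $P$ of $G$ with $|\mathrm{Ga}(P,G)|>2^{(1-\delta)\binom{n}{2}}$, I form an auxiliary $\binom{3}{2}=3$-colored graph $H$ on vertex set $[n]$ whose edge set is $\{e\in E(K_n):|P(e)|=2\}$, and whose color set is $\{(i,j):1\le i<j\le 3\}$, each such edge $e$ receiving the color $P(e)$. The point of Lemma~\ref{3graph: baltri} is exactly that the triangles in $\mathcal{T}_1(P)$ — those with all three edges carrying the same $2$-element palette — are monochromatic triangles of $H$, and there are at least $(1-40\delta)\binom{n}{3}$ of them.

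First I would verify that Lemma~\ref{monoedge} applies to $H$ with $\varepsilon=40\delta$: since $\delta\gg n^{-1/3}$ we certainly have $\tfrac{4}{n}-\tfrac{4}{n^2}\le 40\delta<\tfrac12$ for large $n$, and $H$ is a $\binom{3}{2}=3$-colored graph containing at least $(1-40\delta)\binom{n}{3}$ monochromatic triangles. Lemma~\ref{monoedge} then yields a color $(i,j)$ such that the number of $H$-edges of that color is at least $e(H)-4\cdot 3^2\cdot 40\delta\binom{n}{2}=e(H)-1440\delta\binom{n}{2}$. These are precisely the edges of $K_n$ with palette exactly $\{i,j\}$ under $P$.

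Next I need a lower bound on $e(H)$, i.e. on the number of edges with $|P(e)|=2$. From Lemma~\ref{3graph: baltri}, $|\mathcal{T}_1(P)|\ge(1-40\delta)\binom{n}{3}$, and every edge of a triangle in $\mathcal{T}_1(P)$ has a $2$-element palette; counting, the number of edges $e$ with $|P(e)|=2$ that lie in some $\mathcal{T}_1$-triangle is large. Concretely, the number of triangles of $K_n$ missing $E(H)$ (edges with $|P(e)|\ne2$) is at most $3\cdot(\text{number of non-}H\text{-edges})\cdot(n-2)$, so if $e(H)<(1-c\delta)\binom{n}{2}$ for a suitable constant $c$ then $|\mathcal{T}_1(P)|\le\binom{n}{3}-(\text{something})<(1-40\delta)\binom{n}{3}$, a contradiction; this forces $e(H)\ge(1-c\delta)\binom{n}{2}$. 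Combining with the previous paragraph, the number of edges with palette $\{i,j\}$ is at least $e(H)-1440\delta\binom{n}{2}\ge(1-c\delta-1440\delta)\binom{n}{2}$, and choosing the bookkeeping so the total is at most $37\cdot40\delta=1480\delta$ gives the claimed bound $(1-37\cdot40\delta)\binom{n}{2}$. One should double-check this arithmetic: $1480=1440+40$, so the edge-count slack from $e(H)$ must be absorbed into at most $40\delta\binom{n}{2}$, which is exactly the room $|\mathcal{T}_1(P)|\ge(1-40\delta)\binom{n}{3}$ provides.

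**The main obstacle** I expect is precisely this last constant-chasing step — making sure the loss from passing between "most triangles are in $\mathcal{T}_1$" and "most edges have a $2$-element palette" is controlled by the same $40\delta$, rather than something larger, so that the final constant is genuinely $37\cdot40$ and not, say, $38\cdot40$. This is a routine but slightly delicate double-counting argument (each deficient edge kills at most $n-2$ triangles, and each deficient triangle is counted at most $3$ times), and it is the only place where the non-completeness of $G$ — hence the possibility $|P(e)|=0$ — interacts with the argument; everything else is a transcription of the proof of Theorem~\ref{2edge}.
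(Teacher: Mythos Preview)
Your approach is correct and is precisely the paper's: form the auxiliary $\binom{3}{2}$-colored graph $H$ on the edges with $|P(e)|=2$, invoke Lemma~\ref{3graph: baltri} to get $(1-40\delta)\binom{n}{3}$ monochromatic triangles, then apply Lemma~\ref{monoedge} with $\varepsilon=40\delta$ and three colors.

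The only place where you are overcomplicating matters is the ``main obstacle'' you flag. The bound $e(H)\ge(1-40\delta)\binom{n}{2}$ does \emph{not} require the double-counting argument you sketch (which, as you suspect, would lose a factor and miss the target $37\cdot 40$). Instead it follows immediately from Lov\'asz's Kruskal--Katona bound (Theorem~\ref{trinum}): since $H$ contains at least $(1-40\delta)\binom{n}{3}$ triangles, it must have at least $(1-40\delta)\binom{n}{2}$ edges --- this is exactly the first step inside the proof of Lemma~\ref{monoedge}, and the paper's proof of Theorem~\ref{2edge} already uses it implicitly when writing $e(G)\ge(1-4\delta)\binom{n}{2}$. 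With that in hand the arithmetic is clean: Lemma~\ref{monoedge} gives a dominant color class of size at least $e(H)-4\cdot 3^2\cdot 40\delta\binom{n}{2}\ge(1-40\delta-36\cdot 40\delta)\binom{n}{2}=(1-37\cdot 40\delta)\binom{n}{2}$, on the nose.
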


\subsection{Proof of Theorem~\ref{3graph}}
\noindent\textit{Proof of Theorem~\ref{3graph}.}
Let $\mathcal{C}$ be the collection of containers given by Theorem \ref{container} for $r=3$. Note that every Gallai $3$-coloring of $G$ is a subtemplate of some $P\in \mathcal{C}$.  
Let $\delta = \log^{-11} n$. We define
\[
\mathcal{C}_1 = \left\{P\in \mathcal{C} :\ |\mathrm{Ga}(P, K_n)|\leq 2^{(1- \delta)\binom{n}{2}}\right\},
\hspace{3mm}
\mathcal{C}_2 = \left\{P\in \mathcal{C} :\ |\mathrm{Ga}(P, K_n)|> 2^{(1- \delta)\binom{n}{2}}\right\}.
\]
Similarly to the proof of Theorem~\ref{rcomplete}, applying Theorems~\ref{container},~\ref{2colorclass}, and~\ref{3graph: 2edge}, we obtain that
\[
\begin{split}
|\mathrm{Ga}(\mathcal{C}, G)|&= \left|\mathrm{Ga}(\mathcal{C}_1, G)\right| + \left|\mathrm{Ga}(\mathcal{C}_2, G)\right|
\leq 2^{-\frac{n^2}{4\log^{11} n}}2^{\binom{n}{2}} + 3\cdot\left(2^{e(G)}+2^{-\frac{n}{3\log^2 n}}2^{\binom{n}{2}}\right)\\
& \leq 3\cdot 2^{e(G)} + 2^{-\frac{n}{4\log^2 n}}2^{\binom{n}{2}}.
\end{split}
\]
\qed

\section{Gallai $r$-colorings of non-complete graphs}
Theorem~\ref{rgraph} is a direct consequence of the following three theorems.
\begin{thm}\label{rgraph: high}
For $n, r\in \mathbb{N}$ with $r\geq 4$, there exists $n_0$ such that for all $n>n_0$ the following holds. For a graph $G$ of order $n$ with $e(G) \geq (1 - \log^{-11}n)\binom{n}{2}$, the number of Gallai $r$-colorings of $G$ is strictly less than $r^{\lfloor n^2/4 \rfloor}$.
\end{thm}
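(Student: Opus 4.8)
\textit{Proof proposal for Theorem~\ref{rgraph: high}.}
The plan is to follow the same container-based strategy as in the proof of Theorem~\ref{rcomplete}, but tracking the quantity $r^{\lfloor n^2/4\rfloor}$ rather than $\binom{r}{2}2^{\binom{n}{2}}$ as the target upper bound. First I would apply Theorem~\ref{container} to get the collection $\mathcal{C}$ of $r$-templates of order $n$, so that every Gallai $r$-coloring of $G$ is a subtemplate of some $P\in\mathcal{C}$. Set $\delta = \log^{-11}n$ and split $\mathcal{C}$ into $\mathcal{C}_1=\{P : |\mathrm{Ga}(P,G)|\leq 2^{(1-\delta)\binom{n}{2}}\}$ and $\mathcal{C}_2=\{P : |\mathrm{Ga}(P,G)|> 2^{(1-\delta)\binom{n}{2}}\}$. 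For $\mathcal{C}_1$, Property (iii) of Theorem~\ref{container} gives $|\mathrm{Ga}(\mathcal{C}_1,G)|\leq |\mathcal{C}_1|\cdot 2^{(1-\delta)\binom{n}{2}}\leq 2^{-n^2/(4\log^{11}n)}2^{\binom{n}{2}}$, exactly as before, and this is negligible compared to $r^{\lfloor n^2/4\rfloor}$ since $r\geq 4$ means $r^{\lfloor n^2/4\rfloor}\geq 2^{\binom{n}{2}}(1-o(1))$ in the exponent --- more precisely $r^{n^2/4} = 2^{(n^2/4)\log r}\geq 2^{n^2/2}$, which dominates.

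Next I would argue that every $P\in\mathcal{C}_2$ is a Gallai $r$-template of $G$: Property (ii) of Theorem~\ref{container} gives $\mathrm{RT}(P)\leq n^{-1/3}\binom{n}{3}$, and if some edge $e\in E(G)$ had $|P(e)|=0$ then $\mathrm{Ga}(P,G)=\emptyset$, contradicting membership in $\mathcal{C}_2$. Then apply the stability result: since $G$ is dense ($e(G)\geq(1-\log^{-11}n)\binom{n}{2}$), I can use an analogue of Theorem~\ref{3graph: 2edge} valid for general $r\geq 4$ (this is where the case $r=4$ analysis of Section 6 promised in the overview comes in, and for $r\geq 5$ the even simpler bound on the number of Gallai colorings in each container). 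Concretely, one shows there exist two colors $i,j$ such that $P$ has at least $(1-k\delta)\binom{n}{2}$ edges with palette $\{i,j\}$ for an absolute constant $k$. This partitions $\mathcal{C}_2$ into classes $\mathcal{F}_{i,j}$, $1\leq i<j\leq r$, each of which is a family of the type handled by Theorem~\ref{2colorclass}.

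Applying Theorem~\ref{2colorclass} to each $\mathcal{F}_{i,j}$ with underlying graph $G$ gives $|\mathrm{Ga}(\mathcal{F}_{i,j},G)|\leq 2^{e(G)}+2^{-n/(3\log^2 n)}2^{\binom{n}{2}}$, hence
\[
|\mathrm{Ga}(\mathcal{C}_2,G)|\leq \binom{r}{2}\left(2^{e(G)}+2^{-\frac{n}{3\log^2 n}}2^{\binom{n}{2}}\right).
\]
Combining with the $\mathcal{C}_1$ bound, $|\mathrm{Ga}(\mathcal{C},G)|\leq \binom{r}{2}2^{e(G)} + 2^{-n/(4\log^2 n)}2^{\binom{n}{2}}$. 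It remains to check this is strictly less than $r^{\lfloor n^2/4\rfloor}$. Since $e(G)\leq \binom{n}{2}\leq n^2/2$, we have $\binom{r}{2}2^{e(G)}\leq \binom{r}{2}2^{n^2/2}\leq \binom{r}{2}r^{n^2/4}$, which is not obviously smaller than $r^{\lfloor n^2/4\rfloor}$ --- so the crude bound $2^{e(G)}$ is too weak when $r=4$ and $G$ is nearly complete. \textbf{The main obstacle} is precisely this last comparison: I cannot afford to lose the constant $\binom{r}{2}$ against $2^{e(G)}$ when $2^{e(G)}$ is close to $2^{\binom{n}{2}}=4^{\binom{n}{2}/2}\approx r^{n^2/4}$ for $r=4$. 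The resolution is to use the \emph{sharper} form of Theorem~\ref{2colorclass}'s proof: of the $|\mathrm{Ga}(\mathcal{F}_{i,j},G)|$ colorings, the $2^{e(G)}$ term counts exactly the $2$-colorings using colors $\{i,j\}$ (Lemma~\ref{0matching}), and a $2$-coloring of $G$ using only $2$ of the $r$ colors is \emph{not} a valid coloring we should be over-counting --- rather, summing $2^{e(G)}$ over all $\binom{r}{2}$ pairs over-counts the truly $2$-colored Gallai colorings, whose total is only $\binom{r}{2}(2^{e(G)}-2)+r$. So the bound should really read $|\mathrm{Ga}(\mathcal{C},G)|\leq \binom{r}{2}2^{e(G)} + 2^{-n/(4\log^2n)}2^{\binom{n}{2}}$, and then one uses the hypothesis $e(G)\leq \binom{n}{2}$ together with a triangle-counting argument (as in the remark after Theorem~\ref{rgraph}: a dense non-bipartite-complete graph has many triangles, each forcing a loss of a factor $\sim r/(r+2(r-1))<1$ per triangle in the count of Gallai colorings of $G$ itself, not of templates) to gain back a super-constant factor $2^{\Omega(n)}$ or better, beating $\binom{r}{2}$. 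Alternatively --- and more robustly --- since $e(G)\geq (1-\log^{-11}n)\binom{n}{2}$ forces $\binom{n}{2}-e(G)\leq \log^{-11}n\binom{n}{2}$, one has $2^{e(G)}=2^{\binom{n}{2}}2^{-(\binom{n}{2}-e(G))}$, and comparing $\binom{r}{2}2^{\binom{n}{2}}$ with $r^{\lfloor n^2/4\rfloor}$: writing $\lfloor n^2/4\rfloor = \binom{n}{2}/\log r \cdot \log r \cdot (n^2/4)/\binom{n}{2}$ and noting $(n^2/4)/\binom{n}{2}\to 1/2$ from below while actually $\lfloor n^2/4\rfloor \geq \binom{n}{2}\cdot \tfrac{n/2}{n-1}\cdot\tfrac{1}{1} $, one checks $r^{\lfloor n^2/4\rfloor}\geq r^{(1/2-o(1))n^2}=2^{((1/2-o(1))\log r)n^2}$. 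For $r=4$ this is $2^{(1-o(1))n^2}$, which handily exceeds $\binom{4}{2}2^{\binom{n}{2}}\leq 6\cdot 2^{n^2/2}$ by a factor $2^{(1/2-o(1))n^2}$ --- so in fact the comparison goes through immediately and the ``obstacle'' evaporates: the point $\lfloor n^2/4\rfloor$ versus $n^2/2$ is the wrong reading; one should compare $r^{\lfloor n^2/4\rfloor}$ against $2^{\binom{n}{2}}$ directly, and $r\geq 4$ gives $r^{n^2/4}\geq 2^{n^2/2}>2^{\binom{n}{2}}$ with room to spare, absorbing both the constant $\binom{r}{2}$ and the error term for $n$ large. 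So the real work is entirely in establishing the $r\geq 4$ stability statement feeding into Theorem~\ref{2colorclass}, and the final numerical comparison is routine.
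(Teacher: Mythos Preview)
Your overall strategy is correct and essentially matches the paper's. The paper, like you, applies the container theorem, splits $\mathcal{C}$ into $\mathcal{C}_1$ and $\mathcal{C}_2$ at the threshold $2^{(1-\delta)\binom{n}{2}}$ with $\delta=\log^{-11}n$, and for $r=4$ proves a stability theorem (Theorem~\ref{4graph: 2edge}) to feed into Theorem~\ref{2colorclass}, arriving at $|\mathrm{Ga}(\mathcal{C},G)|\le 6\cdot 2^{e(G)}+2^{-n/(4\log^2 n)}2^{\binom{n}{2}}<4^{\lfloor n^2/4\rfloor}$. For $r\ge 5$ the paper does \emph{not} use stability at all: it proves directly (Lemma~\ref{lemma: rdense}) that every Gallai $r$-template $P$ satisfies $|\mathrm{Ga}(P,G)|\le r^{\frac12\binom{n}{2}}\cdot 2^{-0.007\binom{n}{2}}$, and then just sums over containers. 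You gesture at this split in a parenthetical, but your write-up then proceeds as though a single stability argument covers all $r\ge 4$; be aware that the paper does not prove (and does not need) the stability statement for $r\ge 5$.

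Your final numerical comparison, however, is garbled. You write $r^{\lfloor n^2/4\rfloor}\ge r^{(1/2-o(1))n^2}$, which is false: $\lfloor n^2/4\rfloor=(1/4-o(1))n^2$, not $(1/2-o(1))n^2$. For $r=4$ the correct computation is $4^{\lfloor n^2/4\rfloor}=2^{2\lfloor n^2/4\rfloor}$ and $2\lfloor n^2/4\rfloor-\binom{n}{2}\ge (n^2/2-2)-(n^2/2-n/2)=n/2-2$, so the margin over $6\cdot 2^{\binom{n}{2}}$ is of order $2^{n/2}$, not $2^{(1/2-o(1))n^2}$ as you claim. This is still ample, so the conclusion survives, but the ``obstacle'' paragraph and its resolution should be replaced by this two-line calculation; the detour through over-counting $2$-colorings and triangle-counting is unnecessary.
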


\begin{thm}\label{rgraph: low}
Let $n, r\in \mathbb{N}$ with $r\geq 4$, and $0<\xi\ll 1$. For a graph $G$ of order $n$ with $\lfloor n^2/4 \rfloor < e(G) \leq \lfloor n^2/4 \rfloor + \xi n^2$, the number of Gallai $r$-colorings of $G$ is strictly less than $r^{\lfloor n^2/4 \rfloor}$.
\end{thm}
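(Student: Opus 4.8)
The plan is to reduce the low-density range to the already-established high-density result (Theorem~\ref{rgraph: high}) by finding, inside a graph $G$ with $\lfloor n^2/4\rfloor < e(G)\le \lfloor n^2/4\rfloor+\xi n^2$, a very dense subgraph on a linear number of vertices and exploiting the fact that such a graph must contain many triangles. First I would invoke a supersaturation-type input for triangles: since $e(G)$ exceeds the Tur\'an number $\lfloor n^2/4\rfloor$ by at least one edge, $G$ contains a triangle, and in fact a graph this dense contains $\Omega(n)$ triangles through a common edge, i.e.\ a large book. This is exactly where the Bollob\'as--Nikiforov book-graph result mentioned in the paper's overview enters: a graph on $n$ vertices with at least $\lfloor n^2/4\rfloor+1$ edges contains a book of size $\Omega(n)$, and with a bit more density one gets a book $B_k$ with $k=cn$ for an absolute constant $c>0$ (shrinking $c$ if $\xi$ forces it). Fix such a book: an edge $xy$ together with a set $W$, $|W|=cn$, with $x,y$ adjacent to every vertex of $W$.

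Next I would restrict attention to $W':=W\cup\{x,y\}$ and argue that $G[W']$ is (after possibly deleting a sublinear number of vertices) close to complete, or more robustly: I would iterate. The key structural point is that a Gallai $r$-coloring of $G$ restricts to a Gallai $r$-coloring of any induced subgraph, so $|\mathrm{Ga}(G)|$ is controlled once I can bound the colorings on a large dense induced piece and pay a bounded cost for the rest. Concretely, I would pick the largest $m$ with $m\ge \gamma n$ (some $\gamma=\gamma(\xi)>0$) such that $G$ has an induced subgraph $H$ on $m$ vertices with $e(H)\ge (1-\log^{-11}m)\binom{m}{2}$; such an $H$ exists because the book together with a greedy cleaning argument (repeatedly removing vertices of low degree inside the current set) either terminates with a dense set of linear size or else shows $G$ itself is too sparse, contradicting $e(G)>\lfloor n^2/4\rfloor$. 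Applying Theorem~\ref{rgraph: high} to $H$ yields $|\mathrm{Ga}(H,H)| < r^{\lfloor m^2/4\rfloor}$, and since each of the other $n-m$ vertices contributes at most $r^{n-1}$ choices of colors on its incident edges (extending a coloring of $H$), we get $|\mathrm{Ga}(G)| \le r^{\lfloor m^2/4\rfloor}\cdot r^{(n-m)n}$.

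The final step is the arithmetic that this product is still less than $r^{\lfloor n^2/4\rfloor}$. This needs $\binom{m}{2}/2 + (n-m)n \le \lfloor n^2/4\rfloor - 1$ roughly, which will \emph{not} hold if $m$ is only $\gamma n$ with $\gamma$ small — so a naive one-shot peeling is too lossy. I expect this to be the main obstacle, and the fix is to not peel the vertices off with the trivial bound $r^{n-1}$ but rather to keep the dense core as large as $(1-o(1))n$: one shows that a graph with $e(G)\ge\lfloor n^2/4\rfloor$ and at most $\xi n^2$ extra edges, once we remove an $o(1)$-fraction of vertices of smallest degree, leaves a set $W'$ of size $(1-\varepsilon(\xi))n$ with minimum degree $\ge (1/2 - \varepsilon(\xi))|W'|$ and then edge density $\ge (1-\log^{-11}|W'|)\binom{|W'|}{2}$ forced by combining the book structure with the global edge count — here I would feed in the triangle-counting/supersaturation to show the removed part is genuinely sublinear, not just linear. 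With $m = n - o(n)$ and the removed vertices bounded by $r^{o(n^2)}$ in total, the estimate $|\mathrm{Ga}(G)| \le r^{\lfloor m^2/4\rfloor + o(n^2)} < r^{\lfloor n^2/4\rfloor}$ closes, since $\lfloor m^2/4\rfloor$ is smaller than $\lfloor n^2/4\rfloor$ by a $\Theta(n\cdot o(n))$ term that dominates the $o(n^2)$ overhead once $\xi$ is taken small enough. The delicate quantitative balance — showing the dense core can be taken to miss only $o(n)$ vertices, so that the gain $\lfloor n^2/4\rfloor - \lfloor m^2/4\rfloor$ beats the peeling cost — is the crux of the argument.
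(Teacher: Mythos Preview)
Your reduction to Theorem~\ref{rgraph: high} cannot work, because the graphs in question need not contain any large very-dense induced subgraph. Consider $G=K_{\lfloor n/2\rfloor,\lceil n/2\rceil}$ together with a single extra edge inside one part. Then $e(G)=\lfloor n^2/4\rfloor+1$, but every induced subgraph $H$ on $m$ vertices has at most $m^2/4+O(1)$ edges, which is far below $(1-\log^{-11}m)\binom{m}{2}$ for any $m$ of linear size. So the step ``such an $H$ exists because the book together with a greedy cleaning argument \dots'' fails outright, and no amount of quantitative tuning of the peeling will rescue it: the dense core you hope for simply does not exist in the near-bipartite regime, which is exactly the regime this theorem is about.

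Relatedly, you have misread the Bollob\'as--Nikiforov input. Their theorem is a dichotomy: a graph with $e(G)\ge(\tfrac14-\alpha)n^2$ either has booksize $>(\tfrac16-2\alpha^{1/3})n$ \emph{or} is close to a balanced bipartite graph. The paper exploits both branches. It first iteratively extracts books of size $n/7$; if many are found, edge-disjoint books give the bound directly via the estimate $r(3r)^{n/7}$ per book. If few are found, then after deleting the few base edges the remaining graph has small booksize, so Bollob\'as--Nikiforov forces a near-bipartite structure $V=A\cup B\cup C$ with $|C|$ small, $\delta(G[A,B])$ large, and $\Delta(G[A]),\Delta(G[B])$ small. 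The real work is then a sequence of structural lemmas (Lemmas~\ref{lower: l1}--\ref{lower: l4}) that squeeze the Gallai-coloring count below $r^{\lfloor n^2/4\rfloor}$ for graphs admitting such a partition. Your proposal never enters this near-bipartite branch, and that is where the entire content of the proof lies.
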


\begin{thm}\label{rgraph: middle}
For $n, r\in \mathbb{N}$ with $r\geq 4$, there exists $n_0$ such that for all $n>n_0$ the following holds. Let $n^{-1/36}\ll \xi \leq \frac{1}{2}\log^{-11}n\ll 1$. For a graph $G$ of order $n$ with $(\frac14 + 3\xi)n^2\leq e(G) \leq (\frac12- 3\xi)n^2$, the number of Gallai $r$-colorings of $G$ is strictly less than $r^{\lfloor n^2/4 \rfloor}$.
\end{thm}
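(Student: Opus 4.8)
\textit{Proof proposal for Theorem~\ref{rgraph: middle}.}

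The plan is to reduce the ``middle density'' range to the two endpoint cases already handled by Theorems~\ref{rgraph: high} and~\ref{rgraph: low}, using a triangle supersaturation argument. The key structural fact is that a graph $G$ of order $n$ with $(\frac14+3\xi)n^2 \le e(G) \le (\frac12-3\xi)n^2$ is far from triangle-free, so by the supersaturation result for triangle-free graphs of Balogh, Bushaw, Collares, Liu, Morris, and Sharifzadeh~\cite{balogh2017typical}, $G$ contains many triangles — in fact $G$ contains a large set $U\subseteq V(G)$ (of linear size, say $|U|\ge \gamma n$ for some $\gamma=\gamma(\xi)>0$) such that $G[U]$ is very dense, close to complete on $U$. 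I would first extract such a dense ``core'' $U$: since $e(G)\ge (\frac14+3\xi)n^2$ is strictly above the Turán threshold, a standard application of supersaturation / the removal lemma gives either a clean dense set, or we can iterate — the cleanest route is to invoke~\cite{balogh2017typical} directly in the form that says a graph this far above $\mathrm{ex}(n,K_3)$ contains $K_t$-many-fold more triangles than needed to force a dense blow-up structure.

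Next I would bound the number of Gallai $r$-colorings of $G$ by splitting on the colored structure restricted to the core $U$. The point is that any Gallai $r$-coloring of $G$ restricts to a Gallai $r$-coloring of the dense graph $G[U]$, and by Theorem~\ref{rgraph: high} applied to $G[U]$ (which has $|U|$ vertices and edge density at least $1-\log^{-11}|U|$ after cleaning, since we chose $U$ so that $G[U]$ is almost complete), the number of Gallai $r$-colorings of $G[U]$ is strictly less than $r^{\lfloor |U|^2/4\rfloor}$ — substantially smaller than the trivial bound $r^{e(G[U])}$, by a factor that is exponential in $|U|^2 \gtrsim \gamma^2 n^2$. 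Meanwhile the edges of $G$ not inside $U$ are colored essentially freely, contributing at most $r^{e(G)-e(G[U])}$. Multiplying, the number of Gallai $r$-colorings of $G$ is at most $r^{e(G[U])-\Omega(n^2)}\cdot r^{e(G)-e(G[U])} = r^{e(G)-\Omega(n^2)}$. Since $e(G)\le (\frac12-3\xi)n^2$ and $\lfloor n^2/4\rfloor = n^2/4 - O(1)$, it suffices that the saved term $\Omega(n^2)$ exceeds $e(G)-n^2/4 \le (\frac14-3\xi)n^2$; this is where I must be careful to make the ``saving'' constant from Theorem~\ref{rgraph: high} (which is of order $|U|^2/\log^{2}|U|$, or whatever the precise gap between $r^{\lfloor|U|^2/4\rfloor}$ and $r^{e(G[U])}$ turns out to be) large enough relative to $(\frac14-3\xi)n^2$. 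If a single core $U$ does not give enough saving, I would instead find many (linearly many, edge-disjoint or vertex-disjoint) dense cores and apply Theorem~\ref{rgraph: high} on each, summing the savings.

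The main obstacle I anticipate is \emph{quantitative}: making the saving from the dense core beat the excess $e(G)-\lfloor n^2/4\rfloor$ uniformly across the whole window $(\frac14+3\xi)n^2 \le e(G)\le (\frac12-3\xi)n^2$. When $e(G)$ is close to $\frac12 n^2$ (dense but still not complete) the excess over $n^2/4$ is nearly $n^2/4$, so I need the core-based saving to be a genuinely positive fraction of $n^2$, which forces the core $U$ to have size $\Omega(n)$ with a constant that does not degrade as the density moves — this is exactly the regime where I would lean hardest on the strong form of~\cite{balogh2017typical} (and possibly on the Bollob{\'a}s–Nikiforov book-graph bound~\cite{BN}, as the overview section hints, to locate a vertex lying in many triangles and hence a dense neighborhood). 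A secondary technical point is handling the boundary: near $e(G)=(\frac14+3\xi)n^2$ the core is small, but there the excess $e(G)-n^2/4 \le 3\xi n^2$ is also small, so a modest saving suffices; the two regimes should match up, but verifying the interpolation carefully — in particular that $\xi$ can be taken as large as $\frac12\log^{-11}n$ while still being $\gg n^{-1/36}$, so that Theorem~\ref{rgraph: high}'s hypothesis $e(G[U])\ge (1-\log^{-11}|U|)\binom{|U|}{2}$ is actually met after cleaning — is the part that will need the most attention.

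\QEDB
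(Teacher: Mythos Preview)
Your proposal has a genuine gap at its structural heart. The claim that a graph $G$ with $(\tfrac14+3\xi)n^2 \le e(G) \le (\tfrac12-3\xi)n^2$ must contain a set $U$ of linear size on which $G[U]$ is ``close to complete'' is simply false. Supersaturation (and the theorem of~\cite{balogh2017typical}) tells you $G$ has $\Omega(\xi n^3)$ triangles, but many triangles do \emph{not} force a large near-clique: a random graph $G(n,p)$ with any constant $p\in(\tfrac12+6\xi,\,1-6\xi)$, or a $(1-6\xi)n$-regular graph, lies in your density window yet has no subset $U$ of size $\Omega(n)$ with $e(G[U])\ge (1-\log^{-11}|U|)\binom{|U|}{2}$. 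Even granting a core $U$ of size $\gamma n$, your arithmetic shows the saving is about $|U|^2/4=\gamma^2 n^2/4$, while the excess $e(G)-n^2/4$ can be as large as $(\tfrac14-3\xi)n^2$; this forces $\gamma^2>1-12\xi$, i.e.\ $|U|$ essentially all of $V(G)$ --- which is exactly what the previous sentence rules out. So the ``find a dense core and quote Theorem~\ref{rgraph: high}'' strategy cannot cover the whole window.

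The paper takes a completely different route: it does \emph{not} argue on $G$ directly but on each container template $P$ from Theorem~\ref{container}, and bounds $|\mathrm{Ga}(P,G)|$ via a peeling algorithm. At each step one removes either a vertex of degree at most $(\tfrac12-\xi^2)(|G_i|-1)$, or a pair $\{u,v\}$ joined by a \emph{typical $r$-edge} of $P$ (palette size $\ge 3$, lying in few rainbow triangles) whose common neighbourhood is large; in either case the edges removed contribute at most $r^{(\frac12-\xi^2)(\text{removed vertices})}$ to the count. When the process stops one lands in one of four cases on the remaining graph $G'$: tiny, dense enough for Theorem~\ref{rgraph: high}, sparse enough for Theorem~\ref{rgraph: low}, or a genuinely intermediate case where every surviving vertex has degree $\approx k/2$ and every typical $r$-edge has both endpoints in a controlled set $A$. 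Supersaturation from~\cite{balogh2017typical} enters only here, and in the opposite direction from what you propose: it bounds $e(G'[A])$ (and hence the number of $r$-edges) from \emph{above}, since $G'[A]$ has few triangles. The point is that the container framework lets you exploit the palette structure of $P$, which is where the real saving comes from; your purely graph-theoretic core argument never sees the palettes and so cannot access this.
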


\subsection{Proof of Theorem~\ref{rgraph: high} for $r\geq 5$}
\begin{lemma}\label{lemma: rdense}
Let $n, r\in \mathbb{N}$ with $r\geq 5$ and $0<\xi \leq \frac 1{900}$. Assume that $G$ is a graph of order $n$ with $e(G)\geq (1-\xi)\binom{n}{2}$, and $P$ is a Gallai $r$-template of $G$. Then, for sufficiently large $n$, 
\[
|\mathrm{Ga}(P, G)| \leq r^{\frac12\binom{n}{2}}\cdot 2^{-0.007\binom{n}{2}}.
\]
\end{lemma}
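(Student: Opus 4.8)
The plan is to run the weighted triangle-counting scheme of Lemmas~\ref{baltri} and~\ref{3graph: baltri}, but now as a \emph{direct} upper bound on $|\mathrm{Ga}(P,G)|$ rather than as a stability statement, feeding in Lemma~\ref{small330} to control the heaviest triangle class. First I would set $w(e)=|P(e)|$ when $P(e)\neq\emptyset$ and $w(e)=1$ otherwise, so that $|\mathrm{Ga}(P,G)|\le\prod_{e\in E(G)}|P(e)|\le\prod_{e\in E(K_n)}w(e)$, and hence, since each edge lies in exactly $n-2$ triangles,
\[
\log|\mathrm{Ga}(P,G)|\le\frac{1}{n-2}\sum_{T\in\mathcal{T}}\log\Big(\prod_{e\in T}w(e)\Big).
\]
Then I would use the partition $\mathcal{T}=\mathcal{T}_1(P)\cup\cdots\cup\mathcal{T}_5(P)$ from~(\ref{eq: tripartition}) and record the per-triangle bounds $\prod_{e\in T}w(e)\le 2^3,\ r^2,\ 2r,\ r^3,\ 2^2$ for $T$ in $\mathcal{T}_1,\ldots,\mathcal{T}_5$ respectively; the only one needing comment is $\mathcal{T}_3$, where one edge carries at most $r$ colours while the palette sizes of the other two sum to at most $2$, so their $w$-product is at most $2$.

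Next I would note that every $T\in\mathcal{T}_4(P)$ induces a rainbow triangle in $P$, via a short case check: such a $T$ has no empty palette (otherwise two of its edges have at least $3$ colours and $T\in\mathcal{T}_2$); if one edge has at least $3$ colours then, since $T\notin\mathcal{T}_3$, the remaining two palettes have sizes at least $1$ and at least $2$ and a greedy choice gives a rainbow colouring; otherwise all three palettes have size exactly $2$ but are not all equal, and again a short check produces a rainbow triangle. Hence $|\mathcal{T}_4(P)|\le\mathrm{RT}(P)\le n^{-1/3}\binom{n}{3}$. I would also apply Lemma~\ref{small330} with $k=\tfrac1{10}$, which is legitimate because $\xi\le\tfrac1{900}<\tfrac1{676}=\big(\tfrac{1/10}{2+6\cdot(1/10)}\big)^2$; this gives $|\mathcal{T}_2(P)|\le\max\{\tfrac1{10}|\mathcal{T}_3(P)|,\ 39\,n^{-1/3}\binom{n}{3}\}$.

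Writing $\alpha_i=|\mathcal{T}_i(P)|/\binom{n}{3}$, so that $\sum_i\alpha_i=1$, $\alpha_4\le n^{-1/3}$ and $\alpha_2\le\max\{\alpha_3/10,\ 39n^{-1/3}\}$, the bound above becomes
\[
\log|\mathrm{Ga}(P,G)|\le\tfrac13\binom{n}{2}\big(3\alpha_1+2\alpha_2\log r+(1+\log r)\alpha_3+3\alpha_4\log r+2\alpha_5\big).
\]
I would then maximise the bracket over the admissible region, splitting into two cases. When $\alpha_2\le\alpha_3/10$, I substitute $\alpha_2=\alpha_3/10$, absorb the $o(1)$ term $3\alpha_4\log r$, and note that among $\alpha_1,\alpha_3,\alpha_5$ the coefficient-per-budget ratios are $3$, $\tfrac{1+1.2\log r}{1.1}$ and $2$, the second being largest for $r\ge5$, so the bracket is at most $\tfrac{1+1.2\log r}{1.1}+o(1)$. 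When $\alpha_2\le 39n^{-1/3}$, the $\alpha_2$ and $\alpha_4$ terms are $o(1)$ and the bracket is at most $\max\{3,1+\log r\}+o(1)=1+\log r+o(1)$. A routine check using only $\log r\ge\log 5$ shows that in both cases the bracket is at most $\tfrac32\log r-0.021$ once $n$ is large, which gives $\log|\mathrm{Ga}(P,G)|\le(\tfrac12\log r-0.007)\binom{n}{2}$ and hence the claimed bound.

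The hard part is the final piece of arithmetic: the hypothesis $\xi\le\tfrac1{900}$ is essentially forced. One needs a constant $k$ small enough that $\tfrac13\cdot\tfrac{1+\log r(1+2k)}{1+k}\le\tfrac12\log r-0.007$ already at the boundary case $r=5$, yet large enough that $\xi\le\big(\tfrac{k}{2+6k}\big)^2$ still holds so that Lemma~\ref{small330} applies; these two demands leave very little room, so the bookkeeping must be done carefully. By contrast, the rainbow-triangle case analysis for $\mathcal{T}_4$ and the per-class weight estimates, while they require attention, are routine.
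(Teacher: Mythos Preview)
Your proposal is correct and follows essentially the same approach as the paper: the same weight function $w$, the same triangle partition~(\ref{eq: tripartition}) with the same per-class bounds $2^3,r^2,2r,r^3,2^2$, the same observation that every $T\in\mathcal{T}_4(P)$ induces a rainbow triangle (the paper simply asserts this; your case check is a welcome addition), and the same two-case split coming from Lemma~\ref{small330}. The only difference is the choice of parameter: the paper takes $k=\tfrac{1}{12}$, which makes the constraint $\xi\le\big(\tfrac{k}{2+6k}\big)^2$ exactly $\xi\le\tfrac{1}{900}$, whereas your $k=\tfrac{1}{10}$ gives a little slack ($\xi\le\tfrac{1}{676}$) at the cost of a slightly tighter numerical check in Case~1; both choices yield the required $\tfrac32\log r-0.021$ bound for $r\ge 5$.
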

\begin{proof}
Let $\mathcal{T}$ be the collection of triangles of $K_n$. For a given $r$-template $P$ of order $n$, we again use the partition~(\ref{eq: tripartition}). Let $|\mathcal{T}_1(P)|=\alpha\binom{n}{3}$, $|\mathcal{T}_2(P)|=\beta\binom{n}{3}$, $|\mathcal{T}_3(P)|=\eta\binom{n}{3}$ and $|\mathcal{T}_4(P)|=\gamma\binom{n}{3}$. Then $|\mathcal{T}_5(P)| \leq(1 - \alpha -\beta -\eta)\binom{n}{3}$. 
Note that for every $T\in \mathcal{T}_4(P)$, the template $P$ contains a rainbow triangle with edge set $T$; therefore, we obtain that $|\mathcal{T}_4(P)|\leq \mathrm{RT}(P)\leq n^{-1/3}\binom{n}{3}$, which gives $\gamma \leq n^{-1/3}$. 

Define for $e\in E(K_n)$ the weight function
\[
w(e)=
 \begin{cases}
    1 & \text{if $P(e)=\emptyset$} \\
    |P(e)|  & \text{otherwise}. 
  \end{cases}
\]
Similarly, as in Lemma~\ref{3graph: baltri}, the number of Gallai $r$-colorings of $G$, which is a subtemplate of $P$, satisfies
\begin{equation}\label{eq: common2}
\arraycolsep=1.4pt\def\arraystretch{1.2}
\begin{array}{ll}
\log|\text{Ga}(P, G)| 
& \leq \log\left(\prod_{T\in \mathcal{T}_1}2^3 \prod_{T\in \mathcal{T}_2}r^2 \prod_{T\in \mathcal{T}_3}2r \prod_{T\in \mathcal{T}_4}r^3 \prod_{T\in \mathcal{T}_5}2^2 \right)\cdot \frac{1}{n-2} \\
&\leq \left(3\alpha +  2\beta\log r + \eta\log 2r  + 3\gamma\log r  + 2(1 - \alpha -\beta -\eta)\right)\frac13\binom{n}{2}\\
&\leq \left(2 + \alpha +  (2\log r - 2)\beta + (\log r - 1)\eta + 3n^{-1/3}\log r \right)\frac13\binom{n}{2}.
\end{array}
\end{equation} 
Let $k=1/12$. By Lemma~\ref{small330}, we have $\beta\leq \max\{k\eta,~\frac{3 + 9k}{k}n^{-1/3}\}$. The rest of the proof shall be divided into two cases.
\\

\noindent\textbf{Case 1}: $\beta \leq k\eta$. \\
Together with $\alpha\leq (1 - \beta - \eta)$, continuing (\ref{eq: common2}) we have 
\begin{equation*}
\arraycolsep=1.4pt\def\arraystretch{1.2}
\begin{array}{ll}
\log |\mathrm{Ga}(P, G)| &\leq \left(3 +  (2\log r - 3)\beta + (\log r - 2)\eta + 3n^{-1/3}\log r \right)\frac13\binom{n}{2}\\
&\leq \left(3 +  \left((2k+1)\log r - (3k + 2)\right)\eta + 3n^{-1/3}\log r \right)\frac13\binom{n}{2}.
\end{array}
\end{equation*}
Note that $(2k+1)\log r - (3k + 2)$ is positive as $r\geq 4$. Therefore, together with $\eta\leq 1$ and $k=\frac{1}{12}$, we obtain that

\[\arraycolsep=1.4pt\def\arraystretch{1.2}
\begin{array}{ll}
\log|\mathrm{Ga}(P, G)| 
&\leq \left(\frac76\log r + \frac34 + 3n^{-1/3}\log r \right)\frac13\binom{n}{2}
\leq \left(\frac32\log r  - 0.023 + 3n^{-1/3}\log r \right)\frac13\binom{n}{2}\\
&\leq \frac12\binom{n}{2}\log r -0.007\binom{n}{2},
\end{array}
\]
where the second inequality follows from $(\frac13\log r - \frac34)\geq 0.023$ as $r\geq 5$.
\\

\noindent\textbf{Case 2}: $\beta\leq \frac{3 + 9k}{k}n^{-1/3}$. \\
Together with $\alpha\leq (1 - \eta)$, continuing (\ref{eq: common2}) we have
\[\arraycolsep=1.4pt\def\arraystretch{1.2}
\begin{array}{ll}
\log|\mathrm{Ga}(P, G)| 
& \leq \left(3 + (\log r - 2)\eta + 2\log r \cdot\frac{3 + 9k}{k}n^{-1/3} + 3n^{-1/3}\log r \right)\frac13\binom{n}{2}\\
& \leq \left(\frac32\log r - \left(\frac12\log r - 1\right) + \left(\frac{2 + 6k}{k} + 1\right)3n^{-1/3}\log r \right)\frac13\binom{n}{2}\\
&\leq \left(\frac32\log r - 0.16 + 0.01\right)\frac13\binom{n}{2}
= \frac12\binom{n}{2} \log r -0.05\binom{n}{2},
\end{array}
\]
where the third inequality holds for $r\geq 5$ and sufficiently large $n$.
\end{proof}

Using Lemma~\ref{lemma: rdense}, we prove a stronger theorem for the case $r\geq 5$.
\begin{thm}
For $n, r\in \mathbb{N}$ with $r\geq 5$ and $0<\xi \leq \frac 1{900}$, there exists $n_0$ such that for all $n>n_0$ the following holds. If $G$ is a graph of order $n$, and $e(G)\geq (1-\xi)\binom{n}{2}$, then the number of Gallai $r$-colorings of $G$ is less than $r^{\frac12\binom{n}{2}}.$
\end{thm}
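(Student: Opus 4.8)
The plan is to run the same container-plus-stability scheme used for Theorems~\ref{rcomplete} and~\ref{3graph}, except that here the stability step is essentially free: Lemma~\ref{lemma: rdense} already bounds, inside \emph{every} Gallai $r$-template of $G$, the number of Gallai $r$-colorings of $G$ by $r^{\frac12\binom{n}{2}}2^{-0.007\binom{n}{2}}$, so there is no need to pass to a sub-collection of ``rich'' containers or to isolate a dominant pair of colors as in the earlier arguments.

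First I would apply Theorem~\ref{container} with this value of $r$ to obtain a collection $\mathcal{C}$ of $r$-templates of order $n$ together with a constant $c=c(r)$, satisfying properties (i)--(iii). Viewing a Gallai $r$-coloring of $G$ as an $r$-template of order $n$ — one color on each edge of $G$ and empty palette on each non-edge — it is rainbow triangle-free, so by property~(i) it is a subtemplate of some $P\in\mathcal{C}$; hence every Gallai $r$-coloring of $G$ lies in $\mathrm{Ga}(P,G)$ for some $P\in\mathcal{C}$, and the total count is at most $\sum_{P\in\mathcal{C}}|\mathrm{Ga}(P,G)|$.

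Next I would observe that only those $P\in\mathcal{C}$ that are Gallai $r$-templates of $G$ contribute. Indeed, if $\mathrm{Ga}(P,G)\neq\emptyset$, then some coloring of $G$ is a subtemplate of $P$, which forces $|P(e)|\geq 1$ for every $e\in E(G)$; combined with $\mathrm{RT}(P)\leq n^{-1/3}\binom{n}{3}$ from property~(ii), this means $P$ is a Gallai $r$-template of $G$. For each such $P$, the hypotheses of Lemma~\ref{lemma: rdense} hold (the assumption $e(G)\geq(1-\xi)\binom{n}{2}$ with $0<\xi\leq\frac1{900}$ is exactly what it requires), so $|\mathrm{Ga}(P,G)|\leq r^{\frac12\binom{n}{2}}2^{-0.007\binom{n}{2}}$.

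Finally I would sum over $\mathcal{C}$ and invoke property~(iii): the number of Gallai $r$-colorings of $G$ is at most
\[
|\mathcal{C}|\cdot r^{\frac12\binom{n}{2}}\cdot 2^{-0.007\binom{n}{2}}
\leq 2^{cn^{-1/3}\log^2 n\binom{n}{2}}\cdot r^{\frac12\binom{n}{2}}\cdot 2^{-0.007\binom{n}{2}}
= r^{\frac12\binom{n}{2}}\cdot 2^{\left(cn^{-1/3}\log^2 n - 0.007\right)\binom{n}{2}}.
\]
Since $cn^{-1/3}\log^2 n\to 0$, for all sufficiently large $n$ the exponent of $2$ is negative, and the bound $r^{\frac12\binom{n}{2}}$ follows. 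There is no genuine obstacle beyond Lemma~\ref{lemma: rdense} itself; the only point requiring a word of care is discarding those containers that assign an empty palette to some edge of $G$, which contribute nothing to the count, and checking that the container multiplicity $2^{o(\binom{n}{2})}$ is indeed dwarfed by the $2^{-0.007\binom{n}{2}}$ savings from the lemma.
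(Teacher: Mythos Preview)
Your proposal is correct and follows essentially the same argument as the paper: apply Theorem~\ref{container}, discard containers that assign an empty palette to some edge of $G$ (since they contribute nothing), apply Lemma~\ref{lemma: rdense} to each remaining container, and absorb the $2^{cn^{-1/3}\log^2 n\binom{n}{2}}$ container count into the $2^{-0.007\binom{n}{2}}$ savings. The paper's proof is exactly this.
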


\begin{proof}
Let $\mathcal{C}$ be the collection of containers given by Theorem \ref{container}. Theorem \ref{container} indicates that every Gallai $r$-coloring of $G$ is a subtemplate of some $P\in \mathcal{C}$ and $|\mathcal{C}|\leq 2^{cn^{-1/3}\log^2 n\binom{n}{2}}$ for some constant $c$, which only depends on $r$. 
We may assume that all templates $P$ in $\mathcal{C}$ are Gallai $r$-templates of $G$. By Property (ii) of Theorem~\ref{container}, we always have $\mathrm{RT}(P)\leq n^{-1/3}\binom{n}{3}$. Suppose that for a template $P$ there exists an edge $e\in E(G)$ with $|P(e)|=0$. Then we would obtain $|\mathrm{Ga}(P, G)| = 0$ as a Gallai $r$-coloring of $G$ requires at least one color on each edge. 
Now applying Lemma~\ref{lemma: rdense} on every container $P\in \mathcal{C}$, we obtain that the number of Gallai $r$-colorings of $G$ is at most
\[
\sum_{P\in \mathcal{C}}|\mathrm{Ga}(P, G)|\leq |\mathcal{C}| \cdot r^{\frac12\binom{n}{2}}\cdot 2^{-0.007\binom{n}{2}}< r^{\frac12\binom{n}{2}}
\]
for $n$ sufficiently large.
\end{proof}

\subsection{Proof of Theorem~\ref{rgraph: high} for $r=4$}
Given two colors $R$ and $B$, consider a $4$-template $P$ of order $n$ in which every edge of $K_n$ has palette $\{R, B\}$. 
For a constant $0<\varepsilon\ll 1$ and a graph $G$ with $e(G)> \binom{n}{2} - 2\varepsilon n$, we can easily check that $P$ is a Gallai $4$-template of $G$ and $|\mathrm{Ga}(P, G)|= 2^{e(G)}>4^{\frac12\binom{n}{2} - \varepsilon n}$. This indicates that Lemma~\ref{lemma: rdense} fails to hold when $r=4$. Instead, we shall apply the same technique as for $3$-colorings: prove a stability result to determine the approximate structure of $r$-templates, which would contain too many Gallai $r$-colorings, and then apply this together with Theorem~\ref{2colorclass} to obtain the desired bound.

\begin{lemma}\label{4graph: baltri}
Let $n^{-1/3} \ll \delta \ll 1$. Let $G$ be a graph of order $n$ with $e(G)\geq (1-\delta)\binom{n}{2}$. Assume that $P$ is a Gallai $4$-template of $G$ with $|\mathrm{Ga}(P, G)|>2^{(1- \delta)\binom{n}{2}}$. Then the number of triangles $T$ of $K_n$ with $\sum_{e\in T}|P(e)|=6$ and $P(e)=P(e')$ for every $e$, $e'\in T$ is at least $(1 - 16\delta)\binom{n}{3}$.
\end{lemma}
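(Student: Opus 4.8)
The plan is to mirror the proof of Lemma~\ref{baltri} (the $K_n$ case) but using the five-class partition~(\ref{eq: tripartition}) that is tailored to non-complete graphs, and to exploit Lemma~\ref{small330} to control the ``bad'' class $\mathcal{T}_2(P)$. First I would set $|\mathcal{T}_1(P)|=\alpha\binom{n}{3}$, $|\mathcal{T}_2(P)|=\beta\binom{n}{3}$, $|\mathcal{T}_3(P)|=\eta\binom{n}{3}$, $|\mathcal{T}_4(P)|=\gamma\binom{n}{3}$, so that $|\mathcal{T}_5(P)|\le(1-\alpha-\beta-\eta)\binom{n}{3}$. Since every triangle in $\mathcal{T}_4(P)$ yields a rainbow triangle in $P$ and $P$ is a Gallai $4$-template, we get $\gamma\le n^{-1/3}$. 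Then I would introduce the weight function $w(e)=1$ if $P(e)=\emptyset$ and $w(e)=|P(e)|$ otherwise, and bound $|\mathrm{Ga}(P,G)|$ from above by $\prod_{e\in E(K_n)}w(e)=\bigl(\prod_{T\in\mathcal{T}}\prod_{e\in T}w(e)\bigr)^{1/(n-2)}$, exactly as in~(\ref{eq: common}). Using the crude per-class bounds $\prod_{T\in\mathcal{T}_1}2^3$, $\prod_{T\in\mathcal{T}_2}4^2$ (two palettes of size $\ge 3$, capped at $4$, one palette a $0$ replaced by weight $1$), $\prod_{T\in\mathcal{T}_3}(4\cdot 2\cdot\!\text{something})$ — actually $\prod_{T\in\mathcal{T}_3}8$ since one palette is at most $4$ and the other two contribute at most $2$ in the weighted product — $\prod_{T\in\mathcal{T}_4}4^3$, and $\prod_{T\in\mathcal{T}_5}2^2$, I obtain an inequality of the shape
\[
\log|\mathrm{Ga}(P,G)|\le\Bigl(2+\alpha+c_1\beta+c_2\eta+3n^{-1/3}\log 4\Bigr)\tfrac13\binom{n}{2}
\]
for explicit small constants $c_1=2\log 4-2=2$ and $c_2=\log 8-2=1$.

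Next I would invoke Lemma~\ref{small330} with a suitable constant $k$ (say $k=1$, noting $e(G)\ge(1-\delta)\binom{n}{2}$ with $\delta$ small enough to satisfy the hypothesis $\xi\le(k/(2+6k))^2$), which gives $\beta\le\max\{\eta,\,12n^{-1/3}\}$. Assuming for contradiction that $\alpha<1-16\delta$, I split into the two cases $\beta\le\eta$ and $\beta\le 12n^{-1/3}$, exactly as in Lemma~\ref{3graph: baltri}. In the first case I further split on whether $\eta$ is below or above a threshold proportional to $\delta$; when $\eta$ is small I bound the $\eta$-term directly and use $\alpha<1-16\delta$ to gain a $-\Theta(\delta)\binom{n}{2}$ term, and when $\eta$ is large I instead use $\alpha\le 1-\beta-\eta$ so that the coefficient of $\eta$ becomes negative (since $c_1+c_2-3<0$, i.e.\ $2+1-3=0$ — here one must be slightly more careful and may need a marginally better per-class bound on $\mathcal{T}_3$, or absorb via the $\beta$ term, or choose $k<1$ as in Lemma~\ref{lemma: rdense}). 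In the second case, $\beta$ is negligible, $\eta\le 1-\alpha$, and substituting $\alpha<1-16\delta$ together with $c_2=\log 8-2<1$ again forces $\log|\mathrm{Ga}(P,G)|<(1-\delta)\binom{n}{2}$. In every case this contradicts the hypothesis $|\mathrm{Ga}(P,G)|>2^{(1-\delta)\binom{n}{2}}$, and hence $\alpha\ge 1-16\delta$, which is the claim.

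The main obstacle I anticipate is getting the numerology to close with the clean constant $16$ in $(1-16\delta)\binom{n}{3}$: the coefficients $c_1,c_2$ coming from the weighted triangle product for $r=4$ are not as comfortably small as in the $3$-color case, so the ``large $\eta$'' subcase of Case~1 is tight. The fix — following the template of Lemma~\ref{lemma: rdense} — is to choose the constant $k$ in Lemma~\ref{small330} strictly less than $1$ (so that $\beta\le k\eta$ and the effective coefficient of $\eta$, namely $(2k+1)\log 4-(3k+2)$ or its weighted analogue, is as negative as possible while still keeping $\xi\le(k/(2+6k))^2$ within the allowed range), and to be slightly more precise about the contribution of $\mathcal{T}_3(P)$ to the weighted product (an edge with empty palette in a $\mathcal{T}_3$ triangle contributes weight $1$, not $2$, which can shave the constant down). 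Once the coefficient of $\eta$ is genuinely negative, the rest is a routine chain of inequalities identical in structure to Lemma~\ref{3graph: baltri}.
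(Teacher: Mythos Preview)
Your main line of attack --- using the partition~(\ref{eq: tripartition}) together with Lemma~\ref{small330} and the case split $\beta\le k\eta$ versus $\beta\le Cn^{-1/3}$ --- does not close for $r=4$, and your proposed fix of taking $k<1$ does not rescue it. With the bounds you derive, the inequality reads
\[
\log|\mathrm{Ga}(P,G)|\le\bigl(2+\alpha+2\beta+\eta+4\gamma\bigr)\tfrac13\binom{n}{2},
\]
and after substituting $\alpha\le 1-\beta-\eta$ this becomes $\bigl(3+\beta+4\gamma\bigr)\tfrac13\binom{n}{2}$, which is never below $(1-\delta)\binom{n}{2}$ regardless of how small $k$ is: the coefficient of $\eta$ simply vanishes rather than turning negative, and $\beta$ survives with a positive sign. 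The trick that worked in Lemma~\ref{lemma: rdense} relied on $\tfrac13\log r-\tfrac34>0$, which fails at $r=4$.

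The paper sidesteps Lemma~\ref{small330} entirely by using a \emph{different} triangle partition tailored to the strong density hypothesis $e(G)\ge(1-\delta)\binom{n}{2}$. Since $P$ is a Gallai template of $G$, every edge of $G$ has a nonempty palette, so there are at most $\delta\binom{n}{2}$ edges with $P(e)=\emptyset$, and hence at most $6\delta\binom{n}{3}$ triangles containing such an edge. The paper takes $\mathcal{T}_2=\{T:\exists\,e\in T,\ |P(e)|=0\}$ (so $\beta\le 6\delta$ directly) and $\mathcal{T}_3=\{T=(e_1,e_2,e_3):|P(e_1)|=4,\ |P(e_2)|=|P(e_3)|=1\}$, with $\mathcal{T}_4,\mathcal{T}_5$ defined so that every $T\in\mathcal{T}_4$ forces a rainbow triangle and every $T\in\mathcal{T}_5$ has weighted product at most $4$. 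The point of isolating the $(4,1,1)$ type as $\mathcal{T}_3$ is that its weighted product is exactly $4=2^2$, the same as $\mathcal{T}_5$, so $\eta$ cancels in the arithmetic:
\[
3\alpha+4\beta+2\eta+6\gamma+2(1-\alpha-\beta-\eta-\gamma)=2+\alpha+2\beta+4\gamma.
\]
Now $\alpha<1-16\delta$, $\beta\le 6\delta$, $4\gamma\le\delta$ give $2+\alpha+2\beta+4\gamma<3-3\delta$, and the constant $16$ drops out cleanly.

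Your final remark --- that an empty-palette edge in a $\mathcal{T}_3(P)$ triangle contributes weight $1$, not $2$ --- is the germ of the right idea: the only $\mathcal{T}_3(P)$ triangles with weighted product $8$ are those containing an empty-palette edge, and those are already controlled by the density hypothesis. Developing this observation amounts to re-partitioning along the lines the paper uses; but as written, your proposal leans on Lemma~\ref{small330} and the $k$-trick, which is a dead end here.
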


\begin{proof}
Let $\mathcal{T}$ be the collection of triangles of $K_n$. We define
\begin{equation*}
\arraycolsep=1.4pt\def\arraystretch{1.2}
\begin{array}{ll}
\mathcal{T}_1&=\left\{T \in \mathcal{T} \mid \textstyle\sum_{e\in T}|P(e)|=6 \text{ and } P(e)=P(e')\text{ for every }e,~e'\in T\right\},\\
\mathcal{T}_2&=\left\{T \in \mathcal{T}\mid \exists~e\in T, \ |P(e)|=0\right\},\\
\mathcal{T}_3&=\left\{T \in \mathcal{T}\mid T=\{e_1, e_2, e_3\},~|P(e_1)|= 4,~|P(e_2)|=|P(e_3)|=1\right\},\\
\mathcal{T}_4&=\left\{T \in \mathcal{T}\setminus (\mathcal{T}_1 \cup \mathcal{T}_2\cup \mathcal{T}_3) \mid \textstyle\sum_{e\in T}|P(e)| \geq 6\right\},\\
\mathcal{T}_5&=\left\{T \in \mathcal{T}\setminus \mathcal{T}_2 \mid \textstyle\sum_{e\in T}|P(e)|\leq 5\right\}.
\end{array}
\end{equation*}

Let $|\mathcal{T}_1|=\alpha\binom{n}{3}$, $|\mathcal{T}_2|=\beta\binom{n}{3}$, $|\mathcal{T}_3|=\eta\binom{n}{3}$ and $|\mathcal{T}_4|=\gamma\binom{n}{3}$. Then $|\mathcal{T}_5| =(1 - \alpha -\beta -\eta - \gamma)\binom{n}{3}$.
Since $G$ satisfies $e(G)\geq (1 - \delta)\binom{n}{2}$ and $P$ is a Gallai template, we have $|\mathcal{T}_2|\leq \delta\binom{n}{2}\cdot n\leq 6\delta\binom{n}{3}$, and therefore $\beta\leq 6\delta$.
Observe that for every $T\in \mathcal{T}_4$, the template $P$ contains a rainbow triangle with edge set $T$; therefore, we obtain that $|\mathcal{T}_4|\leq RT(P)\leq n^{-1/3}\binom{n}{3}$, which gives $\gamma \leq n^{-1/3}$.

Define for $e\in E(K_n)$ the weight function
\[
w(e)=
 \begin{cases}
    1 & \text{if $P(e)=\emptyset$} \\
    |P(e)|  & \text{otherwise}. 
  \end{cases}
\]
Assume that $\alpha< 1 - 16\delta$. Similarly, as in Lemma~\ref{3graph: baltri}, the number of Gallai $4$-colorings of $G$ which is a subtemplate of $P$ satisfies
\begin{equation*}
\arraycolsep=1.4pt\def\arraystretch{1.2}
\begin{array}{ll}
\log|\mathrm{Ga}(P, G)| 
& \leq \log \left(\prod_{T\in \mathcal{T}_1}2^3 \prod_{T\in \mathcal{T}_2}4^2 \prod_{T\in \mathcal{T}_3}4 \prod_{T\in \mathcal{T}_4}4^3 \prod_{T\in \mathcal{T}_4}4 \right)\cdot \frac{1}{n-2}\\
&\leq \left(3\alpha +  4\beta + 2\eta + 6\gamma + 2(1 - \alpha -\beta -\eta - \gamma)\right)\frac13\binom{n}{2}\\
&= \left(2 +\alpha +  2\beta + 4\gamma \right)\frac13\binom{n}{2}
< \left(2 + (1 - 16\delta) +  13\delta\right)\frac13\binom{n}{2}
=(1 - \delta)\binom{n}{2}.
\end{array}
\end{equation*} 
This contradicts the assumption that $|\mathrm{Ga}(P, G)|>2^{(1- \delta)\binom{n}{2}}$.
\end{proof}

Similarly, as in Theorem~\ref{2edge}, applying  Lemmas~\ref{monoedge} and~\ref{4graph: baltri}, we obtain the following. 

\begin{thm}\label{4graph: 2edge}
Let $n^{-1/3} \ll \delta \ll 1$. Let $G$ be a graph of order $n$ with $e(G)\geq (1-\delta)\binom{n}{2}$. Assume that $P$ is a Gallai $4$-template of $G$ with $|\mathrm{Ga}(P, G)|>2^{(1- \delta)\binom{n}{2}}$. Then there exist two colors $i$, $j\in [4]$ such that the number of edges of $K_n$ with palette $\{i, j\}$ is at least $(1- 145\cdot 16\delta)\binom{n}{2}$.
\end{thm}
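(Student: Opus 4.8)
The plan is to mimic exactly the argument used for Theorem~\ref{2edge}, substituting Lemma~\ref{4graph: baltri} for Lemma~\ref{baltri} and tracking the numerical constants carefully. Recall that Theorem~\ref{2edge} was proved by forming an auxiliary $\binom{r}{2}$-colored graph $G$ on the edges of $K_n$ whose palette has size exactly $2$, colouring each such edge $e$ by the pair $P(e)$, observing that the balanced monochromatic triangles produced by the stability lemma are exactly monochromatic triangles of this auxiliary graph, and then invoking Lemma~\ref{monoedge} to extract a single colour (i.e.\ a single pair $\{i,j\}$) used on almost all edges.

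Concretely, first I would let $G'$ be the auxiliary graph on vertex set $[n]$ with edge set $\{e\in E(K_n)\mid |P(e)|=2\}$, coloured by the $\binom{4}{2}=6$ possible pairs. By Lemma~\ref{4graph: baltri}, at least $(1-16\delta)\binom{n}{3}$ triangles of $K_n$ have all three palettes equal and of size $2$; every such triangle is a monochromatic triangle of $G'$, so $G'$ contains at least $(1-16\delta)\binom{n}{3}$ monochromatic triangles. Set $\varepsilon = 16\delta$ in Lemma~\ref{monoedge}; since $n^{-1/3}\ll\delta\ll 1$, the hypothesis $\tfrac4n-\tfrac4{n^2}\le \varepsilon<\tfrac12$ holds for large $n$. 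Lemma~\ref{monoedge} then yields a colour (pair) $\{i,j\}$ used on at least $e(G')-4\binom{4}{2}^2\varepsilon\binom{n}{2}$ edges of $G'$.

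The last step is just bookkeeping on the constant: we need $e(G')$ to be close to $\binom{n}{2}$. The stability lemma gives $(1-16\delta)\binom{n}{3}$ balanced triangles with all palettes of size $2$; each such triangle certifies three edges of size-$2$ palette, and a double-counting (each edge lies in $n-2$ triangles) gives $e(G')\ge (1-16\delta)\binom{n}{2}$ — more precisely one argues that the number of edges $e$ with $|P(e)|\ne 2$ is small because each lies in few of the $(1-16\delta)\binom{n}{3}$ good triangles, exactly as in the proof of Theorem~\ref{2edge}. Then the number of edges with palette $\{i,j\}$ is at least
\[
(1-16\delta)\binom{n}{2} - 4\binom{4}{2}^2\cdot 16\delta\binom{n}{2} = \left(1 - 16\delta - 4\cdot 225\cdot 16\delta\right)\binom{n}{2} \geq (1 - 145\cdot 16\delta)\binom{n}{2},
\]
using $\binom{4}{2}^2 = 36$ so that $16\delta + 4\cdot 36\cdot 16\delta = (1+144)\cdot 16\delta = 145\cdot 16\delta$, which is the claimed bound.

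I expect no real obstacle here: the entire content is in Lemma~\ref{4graph: baltri}, which is already proved, and in the fact that Lemma~\ref{monoedge} is stated for all $r\ge 3$ and hence applies verbatim to the $6$-coloured auxiliary graph. The only mild care needed is to confirm that the $\delta$-regime $n^{-1/3}\ll\delta\ll 1$ is compatible with the hypotheses of both Lemma~\ref{monoedge} (which needs $\varepsilon=16\delta$ bounded below by $\tfrac4n-\tfrac4{n^2}$ and above by $\tfrac12$) and Lemma~\ref{4graph: baltri}, and to chase the constant $145\cdot 16$ rather than some other multiple of $\delta$; both are routine.
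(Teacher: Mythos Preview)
Your proposal is correct and follows exactly the same route as the paper, which simply says ``Similarly, as in Theorem~\ref{2edge}, applying Lemmas~\ref{monoedge} and~\ref{4graph: baltri}\ldots'': form the $\binom{4}{2}$-coloured auxiliary graph on the size-$2$-palette edges, feed the $(1-16\delta)\binom{n}{3}$ monochromatic triangles from Lemma~\ref{4graph: baltri} into Lemma~\ref{monoedge} with $r=6$ and $\varepsilon=16\delta$, and combine with $e(G')\ge(1-16\delta)\binom{n}{2}$ to get the constant $1+4\cdot 36=145$. Note only a typo in your displayed equation (``$4\cdot 225$'' should read ``$4\cdot 36$'', as you yourself compute in the line after).
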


\noindent\textit{Proof of Theorem~\ref{rgraph: high} for $r=4$.}
Let $\mathcal{C}$ be the collection of containers given by Theorem~\ref{container} for $r=4$. Note that every Gallai $4$-coloring of $G$ is a subtemplate of some $P\in \mathcal{C}$.  
Let $\delta = \log^{-11} n$. We define
\[
\mathcal{C}_1 = \left\{P\in \mathcal{C} :\ |\mathrm{Ga}(P, G)|\leq 2^{(1- \delta)\binom{n}{2}}\right\},
\hspace{3mm}
\mathcal{C}_2 = \left\{P\in \mathcal{C} :\ |\mathrm{Ga}(P, G)|> 2^{(1- \delta)\binom{n}{2}}\right\}.
\]
Similarly, as in the proof of Theorem~\ref{rcomplete}, applying Theorems~\ref{container},~\ref{2colorclass}, and~\ref{4graph: 2edge}, we obtain that
\[
\begin{split}
|\mathrm{Ga}(\mathcal{C}, G)|&= \left|\mathrm{Ga}(\mathcal{C}_1, G)\right| + \left|\mathrm{Ga}(\mathcal{C}_2, G)\right|
\leq 2^{-\frac{n^2}{4\log^{11} n}}2^{\binom{n}{2}} + 6\left(2^{e(G)}+2^{-\frac{n}{3\log^2 n}}2^{\binom{n}{2}}\right)\\
& \leq 6\cdot 2^{e(G)} + 2^{-\frac{n}{4\log^2 n}}2^{\binom{n}{2}}
<4^{\lfloor n^2/4 \rfloor}.
\end{split}
\]

\subsection{Proof of Theorem~\ref{rgraph: low}}
A \textit{book} of size $q$ consists of $q$ triangles sharing a common edge, which is known as the \textit{base} of the book.
We write $\mathrm{bk} (G)$ for the size of the largest book in a graph $G$ and call it the \textit{booksize} of $G$. 

\begin{lemma}\label{lower: l1}
Let $n, r\in \mathbb{Z}^+$ with $r\geq 4$, $0<\alpha, \beta\ll 1$, and $G$ be a graph of order $n$. Assume that there exists a partition $V(G)=A\cup B$ satisfying the following conditions:
\begin{enumerate}[label={\upshape(\roman*)}]
\item $\delta(G[A, B])\geq (\frac12 - \alpha)n$;
\item $\Delta(G[A])$, $\Delta(G[B])\leq \beta n$.
\end{enumerate} 
Then the number of Gallai $r$-colorings of $G$ is at most $r^{\lfloor n^2/4\rfloor}$. 
Furthermore, if $e(G)\neq\lfloor n^2/4\rfloor$, then the number of Gallai $r$-colorings of $G$ is strictly less than $r^{\lfloor n^2/4\rfloor}$.
\end{lemma}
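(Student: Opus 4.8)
The plan is to count the Gallai $r$-colorings of $G$ by revealing first the colors on the edges of $G[A]$ and $G[B]$ and then, for each such partial coloring, bounding the number of completions to the bipartite edges $E(G[A,B])$; the mechanism is that every internal edge lies in $\Theta(n)$ triangles and hence constrains the bipartite coloring severely. First I would extract the structural content of the hypotheses: (i)–(ii) force $|A|,|B|=(\tfrac12+O(\alpha))n$, so $|A|\,|B|\le\lfloor n^2/4\rfloor$ and $e(G[A,B])\le|A|\,|B|$; any two adjacent vertices $u,v$ in the same part then have at least $(\tfrac12-3\alpha)n\ge n/4$ common neighbors in the opposite part, a set I call $N_{uv}$; and (ii) gives $\Delta(G[A]),\Delta(G[B])\le\beta n$, hence $e_0:=e(G[A])+e(G[B])\le\beta n^2$. (If $A$ or $B$ is empty then $e(G)\le\beta n^2\ll\lfloor n^2/4\rfloor$ and the bound is trivial.) If $e_0=0$ then $G$ is bipartite, hence triangle-free, every $r$-coloring is Gallai, and the count is $r^{e(G)}\le r^{|A|\,|B|}\le r^{\lfloor n^2/4\rfloor}$, with equality exactly when $G=K_{\lfloor n/2\rfloor,\lceil n/2\rceil}$; this disposes of the bipartite case. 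So I may assume $e_0\ge1$, and by the symmetry of $A$ and $B$ that $e(G[A])\ge e_0/2\ge 1$.

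The heart of the argument is the per-$\phi_0$ estimate. Fix a coloring $\phi_0$ of the $e_0$ internal edges (at most $r^{e_0}$ choices) and let $N(\phi_0)$ be the number of completions of $\phi_0$ to a Gallai coloring of $G$. Choose once and for all a matching $M=\{u_1v_1,\dots,u_kv_k\}$ in $G[A]$; since $\Delta(G[A])\le\beta n$, a greedy selection gives $k\ge e(G[A])/(2\beta n)\ge e_0/(4\beta n)$. Because $M$ is a matching inside a single part, the \emph{constraint stars} $\mathrm{Star}_i:=\{u_iw,\;v_iw:w\in N_{u_iv_i}\}$ are pairwise edge-disjoint subsets of $E(G[A,B])$, each of size $2|N_{u_iv_i}|$. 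To complete $\phi_0$ on $E(G[A,B])$, the edges outside $\bigcup_i\mathrm{Star}_i$ admit at most $r^{\,e(G[A,B])-\sum_i 2|N_{u_iv_i}|}$ colorings, while inside each $\mathrm{Star}_i$, for every $w\in N_{u_iv_i}$ the pair $\bigl(c(u_iw),c(v_iw)\bigr)$ must not complete a rainbow triangle with the fixed color $\phi_0(u_iv_i)$, which forbids $(r-1)(r-2)$ of the $r^2$ ordered pairs; as distinct $w$ involve disjoint pairs of edges, $\mathrm{Star}_i$ contributes at most $(3r-2)^{|N_{u_iv_i}|}$. Setting $q=q(r):=(3r-2)/r^2\in(0,1)$ (valid since $r\ge3$ gives $3r-2<r^2$), this yields
\[
N(\phi_0)\;\le\;r^{\,e(G[A,B])}\prod_{i=1}^{k}q^{|N_{u_iv_i}|}\;\le\;r^{|A|\,|B|}\,q^{\,kn/4}.
\]
Summing over $\phi_0$ and using $kn/4\ge e_0/(16\beta)$, the number of Gallai $r$-colorings of $G$ is at most
\[
\sum_{\phi_0}N(\phi_0)\;\le\;r^{e_0}\cdot r^{|A|\,|B|}\,q^{\,kn/4}\;\le\;r^{|A|\,|B|}\bigl(r\,q^{1/(16\beta)}\bigr)^{e_0}.
\]
Since $\beta\ll1$ — concretely $\beta<\log(1/q)/(16\log r)$ — we have $r\,q^{1/(16\beta)}<1$, so the last factor is strictly below $1$, and the count is strictly less than $r^{|A|\,|B|}\le r^{\lfloor n^2/4\rfloor}$.

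Combining the two cases gives the bound $r^{\lfloor n^2/4\rfloor}$ in all cases, with equality forced only when $e_0=0$, $G[A,B]$ is complete, and $|A|\,|B|=\lfloor n^2/4\rfloor$, that is, only when $G=K_{\lfloor n/2\rfloor,\lceil n/2\rceil}$. In particular the inequality is strict whenever $e(G)\ne\lfloor n^2/4\rfloor$: if $e(G)>\lfloor n^2/4\rfloor$ then $e_0\ge1$ (because $e(G[A,B])\le|A|\,|B|\le\lfloor n^2/4\rfloor$), and if $e(G)<\lfloor n^2/4\rfloor$ then either the bipartite estimate or the bound above already gives strictness.

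The one genuinely delicate point is the estimate for $N(\phi_0)$. Reversing the order of revelation — coloring the bipartite edges first and then extending to the internal edges — is hopeless, since it allows up to $r^{e_0}$ completions of the internal edges with nothing to offset them. The remedy is precisely to reveal the internal colors first; hypothesis (ii) is then exactly what lets the induced triangle constraints be made edge-disjoint (via a large matching in $G[A]$ or $G[B]$), so that they multiply into a genuine saving $q^{\Theta(e_0/\beta)}$ that beats the $r^{e_0}$ choices of $\phi_0$.
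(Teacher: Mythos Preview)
Your proof is correct and follows essentially the same approach as the paper: find a matching $M$ inside one part, use that the corresponding ``books'' (your constraint stars) are edge-disjoint, and bound the number of non-rainbow colorings of each book by $(3r-2)^{|N_{uv}|}$ to produce a saving factor that beats the cost of coloring the internal edges. The only cosmetic differences are that the paper parametrizes the matching size by the excess $m=e(G)-\lfloor n^2/4\rfloor$ rather than by $e_0=e(G[A])+e(G[B])$, and simplifies $3r-2$ to $3r$ in the final estimate; your treatment of the $e_0=0$ and strictness cases is slightly more explicit than the paper's.
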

\begin{proof}
By Condition~(i), we have $(\frac12 - \alpha)n \leq |A|, |B|\leq (\frac12 + \alpha)n$.
Let $e(G)=\lfloor n^2/4\rfloor + m$. 
Without loss of generality, we can assume that $m>0$ and $e(G[A])\geq \frac{m}{2}$. Then there exists a matching $M$ in $G[A]$ of size at least $\frac{e(G[A])}{2\Delta(G[A])-1}\geq \frac{m}{4\beta n}$.

For two vertices $u, v\in A$, the number of their common neighbors in $B$ is at least
\[
|B| - 2\left(|B| - \delta(G[A, B])\right)= 2\delta(G[A, B]) - |B|\geq 2\left(\frac12 - \alpha\right)n - \left(\frac12 + \alpha\right)n\geq \frac{n}{3}.
\]
Then, for every $e\in G[A]$, there exists a book graph $B_{e}$ of size $n/3$ with the base $e$. 
Let $\mathcal{B}=\{B_{e} \mid e \in M\}$. Note that $M$ is a matching, and therefore book graphs in $\mathcal{B}$ are edge-disjoint. 
Another crucial fact is that for every $B\in \mathcal{B}$, the number of $r$-colorings of $B$ without rainbow triangles is at most $r\left(r + 2(r-1)\right)^{n/3}<r(3r)^{n/3}$, since once we color the base edge, each triangle must be colored in the way that two of its edges share the same color.
Hence, the number of Gallai $r$-colorings of $G$ is at most
\[
\left(r(3r)^{\frac n 3}\right)^{|M|}r^{e(G) - |M|\left(1 + 2\cdot \frac n3\right)}
= r^{e(G) - (1 - \log_r 3)|M|\cdot \frac n3}
\leq r^{\lfloor n^2/4 \rfloor + m - (1 - \log_r 3)\frac{m}{4\beta n}\cdot \frac n3}
< r^{\lfloor n^2/4 \rfloor},
\]
where the last inequality is given by $\beta\ll 1$.
\end{proof}

\begin{lemma}\label{lower: l2}
Let $n, r\in \mathbb{Z}^+$ with $r\geq 4$, $0<\alpha', \beta\ll 1$, $0<\alpha, \gamma, \xi\ll \varepsilon\ll 1$, and $G$ be a graph of order $n$ with $e(G)\leq \lfloor n^2/4\rfloor + \xi n^2$. Assume that there exists a partition $V(G)=A\cup B\cup C$ satisfying the following conditions:
\begin{enumerate}[label={\upshape(\roman*)}]
\item $d_{G[A, B]}(v)\geq \left(\frac12 - \alpha\right)n$ for all but at most $\gamma n$ vertices in $A\cup B$;
\item $\delta(G[A, B])\geq \left(\frac12 - \alpha'\right)n$;
\item $\Delta(G[A])$, $\Delta(G[B])\leq \beta n$;
\item $0<|C|\leq \gamma n$;
\item for every $v\in C$, both $d(v, A)$, $d(v, B)\geq r\varepsilon n$.
\end{enumerate} 
Then the number of Gallai $r$-colorings of $G$ is strictly less than $r^{\lfloor n^2/4\rfloor}$.
\end{lemma}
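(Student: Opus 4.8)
The plan is to peel off the vertices of $C$ one at a time, reducing to Lemma~\ref{lower: l1}. Write $N(H)$ for the number of Gallai $r$-colorings of a graph $H$. First observe that $G[A\cup B]$ satisfies the hypotheses of Lemma~\ref{lower: l1} on $n-|C|$ vertices (with $\alpha$ replaced by $\alpha'$ and $\beta$ by $2\beta$, using $|C|\le\gamma n$), so $N(G[A\cup B])\le r^{\lfloor(n-|C|)^2/4\rfloor}$; since $|C|\ge1$ this is $r^{\lfloor n^2/4\rfloor}$ times a factor $r^{-\Omega(|C|n)}$. Hence it is enough to prove a \emph{one-vertex step}: if $G'$ has a partition $A\cup B\cup C'$ as in the lemma, $v\in C'$, and $H:=G'-v$, then
\[
N(G')\ \le\ 2^{o_\varepsilon(1)\,n}\cdot r^{|C'|}\cdot N(H)\ +\ r^{e(H)}\,2^{-\Omega(\varepsilon^2 n^2)},
\]
where $o_\varepsilon(1)\to 0$ as $\varepsilon\to 0$ and the hidden constants do not depend on $n$. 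Applying this to $G=G_0\supset G_1\supset\cdots\supset G_{|C|}=G[A\cup B]$, with $G_i=G_{i-1}-v_i$ and $C=\{v_1,\dots,v_{|C|}\}$ (conditions (i)--(v) persist along the sequence, since we only delete vertices of $C$), one gets $N(G)\le 2^{o_\varepsilon(1)\,n|C|}r^{O(|C|^2)}r^{\lfloor(n-|C|)^2/4\rfloor}+(\text{error})$. Because $|C|\le\gamma n$ and $\gamma,\xi\ll\varepsilon\ll 1$, the accumulated factor is dominated by the gain $r^{\Omega(|C|n)}$, and the accumulated error is at most $r^{\lfloor n^2/4\rfloor}2^{(\xi\log r+o_\varepsilon(1))n^2-\Omega(\varepsilon^2n^2)}=o\!\left(r^{\lfloor n^2/4\rfloor}\right)$, which yields the strict inequality.

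For the one-vertex step, set $X:=N_{G'}(v)\cap A$ and $Y:=N_{G'}(v)\cap B$; by (v), $|X|,|Y|\ge r\varepsilon n$, and by (i) all but at most $\gamma n$ vertices of $A\cup B$ are joined to all but at most $2\alpha n$ vertices of the other side, so the bipartite graph $H_v:=G'[X,Y]$ is almost complete, in particular $e(H_v)\ge\tfrac12 r^2\varepsilon^2 n^2$. A Gallai coloring $\chi$ of $G'$ is a Gallai coloring $\chi_0$ of $H$ together with a coloring $\psi$ of the star at $v$ that is \emph{admissible}: $\{\psi(x),\psi(y),\chi_0(xy)\}$ is not rainbow for every $xy\in E(H_v)$. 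Call $\chi_0$ \emph{degenerate} if for some two colors $c\ne c'$ either more than $\varepsilon n/\log r$ vertices of $X\cup Y$ have $\chi_0$ using only $\{c,c'\}$ on at least $\tfrac14 r\varepsilon n$ of their $H_v$-edges, or there are $X'\subseteq X$, $Y'\subseteq Y$ with $|X'|,|Y'|\ge\varepsilon n/8$ on which $\chi_0$ uses only $\{c,c'\}$. In either case $\chi_0$ omits a color on $\Omega(\varepsilon^2 n^2)$ edges of $H_v$, so a union bound over the $\binom r2$ color pairs and the relevant subsets of $X\cup Y$ gives that the number of degenerate Gallai colorings of $H$ is at most $r^{e(H)}2^{-\Omega(\varepsilon^2 n^2)}$; since each extends to $v$ in at most $r^{n}$ ways, degenerate colorings contribute the error term.

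It remains to show that a \emph{non-degenerate} $\chi_0$ admits only $2^{o_\varepsilon(1)n}r^{|C'|}$ admissible $\psi$. Let $c$ be a most popular color of $\psi$ on $X\cup Y$ and $Z$ the set of vertices colored otherwise. If two colors each occurred on $\Omega(\varepsilon n)$ vertices of $X$ (or of $Y$), then together with a most popular $\psi$-color of $Y$ (resp.\ $X$) and admissibility one would produce a two-colored linear biclique in $H_v$, i.e.\ the second type of degeneracy; so $\psi$ is almost constant on each side and one deduces $|Z|\le\varepsilon n/\log r$. For each $z\in Z$ admissibility forces $\chi_0$ to use only $\{c,\psi(z)\}$ on the $H_v$-edges from $z$ into $(X\cup Y)\setminus Z$, which — as $|Z|$ is now this small and $z$ is typical — are at least $\tfrac14 r\varepsilon n$ in number; by non-degeneracy there are at most $\varepsilon n/\log r+\gamma n$ such possible $z$, so $|Z|\le 2\varepsilon n/\log r$ and the number of admissible $\psi$ on $X\cup Y$ is at most $r\sum_{k\le 2\varepsilon n/\log r}\binom{|X\cup Y|}{k}r^{k}=2^{o_\varepsilon(1)n}$, while the $\le|C'|$ remaining star-edges give at most $r^{|C'|}$ further choices. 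The main obstacle is precisely this dichotomy — fixing workable thresholds so that a non-degenerate coloring of the almost-complete bipartite graph $H_v$ really does force every admissible apex-coloring to be monochromatic off an $o(n)$-set; everything else is bookkeeping with the hierarchy $\xi,\gamma,\alpha\ll\varepsilon$ and the appeal to Lemma~\ref{lower: l1}.
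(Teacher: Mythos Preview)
Your overall strategy—peel off $C$ and reduce to Lemma~\ref{lower: l1}—is reasonable, and the biclique half of your dichotomy is essentially the mechanism the paper exploits. But there is a genuine gap in the \emph{first} type of degeneracy. You assert that the set of $\chi_0$ for which more than $\varepsilon n/\log r$ vertices $z\in X\cup Y$ have at least $\tfrac14 r\varepsilon n$ of their $H_v$-edges in $\{c,c'\}$ has size at most $r^{e(H)}2^{-\Omega(\varepsilon^2 n^2)}$. This is false. For a uniformly random $r$-coloring, a vertex $z\in X$ with $H_v$-degree $d_z$ has in expectation $\tfrac{2}{r}d_z$ edges in $\{c,c'\}$, and this exceeds your threshold $\tfrac14 r\varepsilon n$ as soon as $d_z>\tfrac18 r^2\varepsilon n$; since $|Y|$ may be of order $n$ (nothing forbids $d(v,B)\approx n/2$), for such $v$ \emph{every} typical $z$ is ``bad'' for every pair $\{c,c'\}$, so almost all colorings are first-type degenerate and no saving is possible. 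The fix is simply to drop the first type: biclique non-degeneracy alone already forces $c_X=c_Y$ and $|Z|=O_r(\varepsilon n)$, which suffices for the $2^{o_\varepsilon(1)n}$ bound on admissible $\psi$; the bootstrap to $|Z|\le 2\varepsilon n/\log r$ is unnecessary.

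The paper's argument is shorter and sidesteps all of this. Instead of iterating and classifying \emph{base} colorings of $H=G'-v$, it splits the full colorings of $G$ in one shot: $\mathcal C_1$ consists of colorings where some $v\in C$ has disjoint color sets $\mathcal R_1,\mathcal R_2$ with $d(v,A;\mathcal R_1),d(v,B;\mathcal R_2)\ge\varepsilon n$, and $\mathcal C_2$ is the rest. For $\mathcal C_1$ the rainbow-free constraint forces every edge of $G[N(v,A;\mathcal R_1),\,N(v,B;\mathcal R_2)]$ into two colors, saving $\Omega(\varepsilon^2 n^2)$ directly—this is your biclique idea applied once, globally. For $\mathcal C_2$ a two-line pigeonhole (their Claim~\ref{claim: dominatecolor}) shows each $v\in C$ has one dominant color on both $A$ and $B$, so all edges $G[C,A\cup B]$ can be colored in at most $r^{|C|n/3}$ ways; together with Lemma~\ref{lower: l1} on $G[A\cup B]$ and the trivial $r^{\binom{|C|}{2}}$ on $G[C]$ this gives $\log_r|\mathcal C_2|\le \lfloor n^2/4\rfloor-\tfrac14$. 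No induction, no degenerate/non-degenerate split on $\chi_0$, and no accumulation of error terms.
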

\begin{proof}
By Condition~(i), we have 
\begin{equation}\label{eq: partsize}
\left(\frac12 - \alpha\right)n \leq |A|, |B|\leq \left(\frac12 + \alpha\right)n.
\end{equation}
For a vertex $v$, a set $S$, a set of colors $\mathcal{R}$ and a coloring of $G$, let $N(v, S; \mathcal{R})$ be the set of vertices $u\in N(v, S)$, such that $uv$ is colored by some color in $\mathcal{R}$. Let $d(v, S; \mathcal{R})=|N(v, S; \mathcal{R})|$.
Denote by $\mathcal{C}_1$ the set of Gallai $r$-colorings of $G$, in which there exist a vertex $v\in C$, and two disjoint sets of colors $\mathcal{R}_1$ and $\mathcal{R}_2$, such that both $d(v, A; \mathcal{R}_1)$, $d(v, B; \mathcal{R}_2)\geq \varepsilon n$. Let $\mathcal{C}_2$ be the set of Gallai $r$-colorings of $G$, which are not in $\mathcal{C}_1$.

We first show that $\mathcal{C}_1= o(r^{\lfloor n^2/4\rfloor})$. We shall count the ways to color $G$ so that the resulting colorings are in $\mathcal{C}_1$. First, we color the edges in $G[C, A\cup B]$; the number of ways is at most $r^{e(G[C, A\cup B])}$. 
Once we fix the colors of edges in $G[C, A\cup B]$, by the definition of $\mathcal{C}_1$, there exist a vertex $v\in C$, and two disjoint sets of colors $\mathcal{R}_1$ and $\mathcal{R}_2$, such that $d(v, A; \mathcal{R}_1)$, $d(v, B; \mathcal{R}_2)\geq \varepsilon n$. 
We observe that for every edge $e=uw$ between $N_1=N(v, A; \mathcal{R}_1)$ and $N_2=N(v, B; \mathcal{R}_2)$, $e$ either shares the same color with $uv$, or with $vw$, as otherwise we would obtain a rainbow triangle $uvw$. Then the number of ways to color edges in $G[N_1, N_2]$ is at most $2^{e(G[N_1, N_2])}\leq r^{\frac12 e(G[N_1, N_2])}$. 
Note that by Condition~(i), inequality~(\ref{eq: partsize}) and $\alpha, \gamma\ll \varepsilon$, we have
\[
e(G[N_1, N_2])\geq (|N_1| - \gamma n) (|N_2| - 2\alpha n)\geq \frac12 \varepsilon^2 n^2.
\]
Hence, we obtain
\[
\begin{split}
\log_r|\mathcal{C}_1| & \leq e(G[C, A\cup B]) + \frac12e(G[N_1, N_2]) + (e(G)- e(G[C, A\cup B])- e(G[N_1, N_2]))\\
&= e(G)- \frac12 e(G[N_1, N_2])
\leq \lfloor n^2/4\rfloor + \xi n^2 - \frac14 \varepsilon^2 n^2,
\end{split}
\]
which indicates $|\mathcal{C}_1|= o(r^{\lfloor n^2/4\rfloor})$ as $\xi \ll \varepsilon$.

It remains to estimate the size of $\mathcal{C}_2$. Recall that for a coloring in $\mathcal{C}_2$, for every vertex $v\in C$, there are no two disjoint sets of colors $\mathcal{R}_1$ and $\mathcal{R}_2$ such that $d(v, A; \mathcal{R}_1)$, $d(v, B; \mathcal{R}_2)\geq \varepsilon n$. 
\begin{claim}\label{claim: dominatecolor}
Let $\mathcal{S}$ be a set of $r$ colors. For every coloring in $\mathcal{C}_2$, and every vertex $v\in C$, there exists a color $R\in \mathcal{S}$, such that both $d(v, A; \mathcal{S}\setminus\{R\})<\varepsilon n$ and $d(v, B;  \mathcal{S}\setminus\{R\})<\varepsilon n$. 
\end{claim}
\begin{proof}
We arbitrarily fix a coloring in $\mathcal{C}_2$ and a vertex $v\in C$. By Condition~(v), there exists a color $R$ such that $d(v, A; R)\geq \varepsilon n$. By the definition of $\mathcal{C}_2$, we obtain that $d(v, B; \mathcal{S}\setminus\{R\})<\varepsilon n$. Then we also have $d(v, B; R)\geq d(v, B)-d(v, B; \mathcal{S}\setminus\{R\})\geq r\varepsilon n - \varepsilon n >\varepsilon n$. For the same reason, we obtain that $d(v, A; \mathcal{S}\setminus\{R\})<\varepsilon n$.
\end{proof}
By Claim~\ref{claim: dominatecolor}, the number of ways to color edges in $G[C, A\cup B]$ is at most
\[
\left(r\sum_{i\leq \varepsilon n}\binom{n}{i}\sum_{i\leq \varepsilon n}\binom{n}{i} r^{2\varepsilon n}\right)^{|C|}
\leq\left(4r\left(\frac{ne}{\epsilon n}\right)^{2\varepsilon n} r^{2\varepsilon n}\right)^{|C|}
\leq r^{\left(\left(\log_r e - \log_r \varepsilon +1\right)2\varepsilon n + 2\right)|C|}
< r^{\frac{|C|n}{3}},
\]
where the last inequality is given by $\left(\log_r e - \log_r \varepsilon +1\right)2\varepsilon\ll \frac13$ as $\varepsilon \ll 1$.
Note that by Conditions~(ii)--(iv), we have 
\begin{itemize}
 \setlength\itemsep{-0.5ex}
\item $\delta(G[A, B])\geq  \left(\frac12 - \alpha'\right)n\geq \left(\frac12 - \alpha'\right)(|A| + |B|)$;
\item $\Delta(G[A]), \Delta(G[B])\leq \beta n \leq \frac{\beta}{1 - \gamma}(|A| + |B|)$.
\end{itemize}
Applying Lemma~\ref{lower: l1} on $G[A\cup B]$, we obtain that the number of ways to color edges in $G[A\cup B]$ is at most $r^{\frac{(n - |C|)^2}{4}}$. A trivial upper bound for the ways to color the rest of the edges, that is, the edges in $G[C]$ is  $r^{\binom{|C|}{2}}$.
Hence, we have
\[
\log_r |\mathcal{C}_2|\leq \frac{|C|n}{3} + \frac{(n - |C|)^2}{4} + \binom{|C|}{2}
= \frac{n^2}{4} - \left(\frac{n}{6} - \frac34|C| + \frac12\right)|C|\leq \lfloor n^2/4\rfloor - \frac14,
\]
where the last inequality is given by $0<|C|\leq \gamma n$ and $\gamma \ll1$. 
Finally, we obtain that the number of Gallai $r$-colorings of $G$ is
\[
|\mathcal{C}_1| + |\mathcal{C}_2| \leq o(r^{\lfloor n^2/4\rfloor}) + r^{\lfloor n^2/4\rfloor - \frac14} < r^{\lfloor n^2/4\rfloor}.
\]
\end{proof}

\begin{lemma}\label{lower: l3}
Let $n, r\in \mathbb{Z}^+$ with $r\geq 4$, $\alpha, \beta, \gamma, \xi\ll 1$, and $G$ be a graph of order $n$ with $\lfloor n^2/4\rfloor < e(G)\leq \lfloor n^2/4\rfloor 
+ \xi n^2$. Assume, that there exists a partition $V(G)=A\cup B\cup C$ satisfying the following conditions:
\begin{enumerate}[label={\upshape(\roman*)}]
\item $\delta(G[A, B])\geq (\frac12 - \alpha)n$;
\item $\Delta(G[A])$, $\Delta(G[B])\leq \beta n$;
\item $0<|C|\leq \gamma n$;
\item for every $v\in C$, $d(v)\geq n/2$.
\end{enumerate} 
Then the number of Gallai $r$-colorings of $G$ is strictly less than $r^{\lfloor n^2/4\rfloor}$.
\end{lemma}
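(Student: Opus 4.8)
The plan is to deduce Lemma~\ref{lower: l3} from Lemma~\ref{lower: l2}, and from Lemma~\ref{lower: l1} in a degenerate subcase, by reassigning each vertex of $C$ that would violate Condition~(v) of Lemma~\ref{lower: l2} to the part of the bipartition $A\cup B$ that it most resembles. Fix a constant $\varepsilon$ with $\alpha,\beta,\gamma,\xi\ll\varepsilon\ll 1$ and $r\varepsilon\ll 1$, and set
\[
C_A=\{v\in C:\ d(v,A)<r\varepsilon n\},\qquad C_B=\{v\in C:\ d(v,B)<r\varepsilon n\},\qquad C'=C\setminus(C_A\cup C_B).
\]
First I would check that $C_A,C_B,C'$ partition $C$: a vertex in $C_A\cap C_B$ would satisfy $d(v)<2r\varepsilon n+|C|<n/2$, contradicting Condition~(iv). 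Then put $A'=A\cup C_A$ and $B'=B\cup C_B$, so $V(G)=A'\cup B'\cup C'$ with $|C'|\le|C|\le\gamma n$.

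The core of the argument is to verify that $(A',B',C')$ satisfies the hypotheses of Lemma~\ref{lower: l2} when $C'\ne\emptyset$, and that $(A',B')$ satisfies those of Lemma~\ref{lower: l1} when $C'=\emptyset$. The one genuinely new input is a degree estimate for the moved vertices: if $v\in C_A$, then Condition~(iv) of Lemma~\ref{lower: l3} together with $d(v,A)<r\varepsilon n$ forces
\[
d(v,B')\ge d(v,B)=d(v)-d(v,A)-d(v,C)\ge \frac n2-r\varepsilon n-\gamma n,
\]
and symmetrically for $v\in C_B$; hence $\delta(G[A',B'])\ge\left(\frac12-r\varepsilon-\gamma\right)n$, which supplies Condition~(ii) of Lemma~\ref{lower: l2} (and Condition~(i) of Lemma~\ref{lower: l1} when $C'=\emptyset$). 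The rest is bookkeeping: each original vertex of $A$ (resp.\ $B$) retains degree at least $\left(\frac12-\alpha\right)n$ into the only-enlarged set $B'$ (resp.\ $A'$), so Condition~(i) of Lemma~\ref{lower: l2} holds with the $\le\gamma n$ vertices of $C_A\cup C_B$ as the exceptional set; adding at most $\gamma n$ vertices to a side raises its internal maximum degree by at most $\gamma n$ and each moved vertex itself has internal degree below $r\varepsilon n+\gamma n$, so $\Delta(G[A']),\Delta(G[B'])\le(\beta+r\varepsilon+\gamma)n$; the bound $|C'|\le\gamma n$ gives Condition~(iv); every $v\in C'$ has $d(v,A')\ge d(v,A)\ge r\varepsilon n$ and $d(v,B')\ge d(v,B)\ge r\varepsilon n$, giving Condition~(v); and $e(G)\le\lfloor n^2/4\rfloor+\xi n^2$ is inherited unchanged.

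Given this, the conclusion is immediate. If $C'\ne\emptyset$, then $1\le|C'|\le\gamma n$ and Lemma~\ref{lower: l2} yields strictly fewer than $r^{\lfloor n^2/4\rfloor}$ Gallai $r$-colorings of $G$. If $C'=\emptyset$, then $A'\cup B'=V(G)$ and Lemma~\ref{lower: l1} yields at most $r^{\lfloor n^2/4\rfloor}$; since $e(G)>\lfloor n^2/4\rfloor$ by hypothesis, this inequality is strict. This is the only place the lower bound on $e(G)$ is used.

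The main obstacle is not combinatorial but the parameter hierarchy: one must choose $\varepsilon$ small enough that $r\varepsilon$, $\beta+r\varepsilon+\gamma$ and $r\varepsilon+\gamma$ all remain $\ll 1$ (so they are admissible values for the parameters $\beta$ and $\alpha'$ fed to Lemmas~\ref{lower: l1} and~\ref{lower: l2}), yet large enough that $\alpha,\gamma,\xi\ll\varepsilon$. Because all of $\alpha,\beta,\gamma,\xi$ are $\ll 1$ in the hypothesis, such an $\varepsilon$ exists. The subtle point in the write-up is that the slack of $\gamma n$ exceptional vertices in Condition~(i) of Lemma~\ref{lower: l2} is exactly what allows the parameter there to stay equal to the original $\alpha$ (which must be $\ll\varepsilon$) rather than the larger $r\varepsilon+\gamma$ that Condition~(ii) forces.
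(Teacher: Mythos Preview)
Your proposal is correct and follows essentially the same approach as the paper's own proof: split $C$ according to which side a vertex has few neighbours in, absorb those vertices into $A$ and $B$, and then apply Lemma~\ref{lower: l2} (or Lemma~\ref{lower: l1} when $C'=\emptyset$). Your explicit check that $C_A\cap C_B=\emptyset$ is a small improvement over the paper, which leaves this implicit; your bound $\Delta(G[A']),\Delta(G[B'])\le(\beta+r\varepsilon+\gamma)n$ is slightly cruder than the paper's $\max\{(\beta+\gamma)n,(r\varepsilon+\gamma)n\}$ but of course suffices.
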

\begin{proof}
Let $\alpha, \gamma, \xi \ll \varepsilon \ll 1$. Let $C_1=\{v\in C \mid d(v, A)< r\varepsilon n\}$, and $C_2=\{v\in C \mid d(v, B)< r\varepsilon n\}$. By Conditions~(iii) and (iv), for every $v\in C_1$, we have $d(v, B)\geq \left(\frac{1}{2} -\gamma - r\varepsilon\right) n$. Similarly, for every $v\in C_2$, we have $d(v, A)\geq \left(\frac{1}{2} - \gamma - r\varepsilon\right) n$. Define 
\[
A'=A \cup C_1, \quad B'=B \cup C_2, \quad C'=C \setminus(C_1\cup C_2).
\]
If $C'=\emptyset$, then we obtain a new partition $V(G)=A'\cup B'$ satisfying the following properties:
\begin{itemize}
 \setlength\itemsep{-0.5ex}
\item $\delta(G[A', B'])\geq \min\{\left(\frac12 - \alpha\right)n, \left(\frac{1}{2} - \gamma - r\varepsilon\right) n\}
=\left(\frac{1}{2}  -\gamma - r\varepsilon\right) n$;
\item $\Delta(G[A'])$, $\Delta(G[B'])\leq \min\{(\beta + \gamma)n, (r\varepsilon + \gamma)n\}$.
\end{itemize}
Together with $e(G)>\lfloor n^2/4\rfloor$, by Lemma~\ref{lower: l1}, we obtain that the number of Gallai $r$-colorings of $G$ is strictly less than $r^{\lfloor n^2/4\rfloor}$.
Otherwise, we obtain a new partition $V(G)=A'\cup B'\cup C'$ satisfying the following properties:
\begin{itemize}
  \setlength\itemsep{-0.5ex}
\item $d_{G[A', B']}(v)\geq \left(\frac12 - \alpha\right)n$ for all but at most $\gamma n$ vertices in $A'\cup B'$;
\item $\delta(G[A', B'])\geq \left(\frac{1}{2}  - \gamma - r\varepsilon\right) n$;
\item $\Delta(G[A'])$, $\Delta(G[B'])\leq \min\{(\beta + \gamma)n, (r\varepsilon + \gamma)n\}$;
\item $0<|C'|\leq |C|\leq \gamma n$;
\item for every $v\in C'$, both $d(v, A')$, $d(v, B')\geq r\varepsilon n$.
\end{itemize}
Together with $e(G)\leq \lfloor n^2/4\rfloor + \xi n^2$, by Lemma~\ref{lower: l2}, the number of Gallai $r$-colorings of $G$ is strictly less than $r^{\lfloor n^2/4\rfloor}$.
\end{proof}

Now, we prove a lemma which is crucial to the proof of Theorem~\ref{rgraph: low}.
\begin{lemma}\label{lower: l4}
Let $n, r\in \mathbb{Z}^+$ with $r\geq 4$, $\alpha, \beta, \gamma, \xi\ll 1$, and $G$ be a graph of order $n$ with $\lfloor n^2/4\rfloor < e(G)\leq \lfloor n^2/4\rfloor + \xi n^2$. Assume that there exists a partition $V(G)=A\cup B\cup C$ satisfying the following conditions:
\begin{enumerate}[label={\upshape(\roman*)}]
\item $\delta(G[A, B])\geq (\frac12 - \alpha)n$;
\item $\Delta(G[A])$, $\Delta(G[B])\leq \beta n$;
\item $|C|\leq \gamma n$.
\end{enumerate} 
Then the number of Gallai $r$-colorings of $G$ is strictly less than $r^{\lfloor n^2/4\rfloor}$.
\end{lemma}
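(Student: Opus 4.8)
The plan is to peel off the low-degree vertices of $C$ one at a time until the remaining graph satisfies the hypotheses of Lemma~\ref{lower: l3} (or of Lemma~\ref{lower: l1}, if the remaining part of $C$ has become empty), apply that lemma, and then re-attach the peeled vertices while controlling how many colorings are gained at each step. The only arithmetic input is the identity $\lfloor m^2/4\rfloor-\lfloor(m-1)^2/4\rfloor=\lfloor m/2\rfloor$ for integers $m\ge1$; in particular, if a vertex $v$ of an $m$-vertex graph $H$ satisfies $d_H(v)<m/2$, then $d_H(v)\le\lfloor m/2\rfloor=\lfloor m^2/4\rfloor-\lfloor(m-1)^2/4\rfloor$.

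Set $H_1=G$, and as long as $H_i$ has a vertex $v\in C$ with $d_{H_i}(v)<|V(H_i)|/2$, pick such a $v_i$ and put $H_{i+1}=H_i-v_i$; stop after $t$ steps at $G^{*}:=H_{t+1}$, and write $n^{*}=|V(G^{*})|=n-t$. Only vertices of $C$ are removed, each at most once, so $t\le|C|\le\gamma n$ and $V(G^{*})=A\cup B\cup C^{*}$ with $C^{*}:=C\cap V(G^{*})$; by the stopping rule every $v\in C^{*}$ has $d_{G^{*}}(v)\ge n^{*}/2$. Since $|V(H_i)|=n-i+1$ and $d_{H_i}(v_i)<(n-i+1)/2$, summing the identity above telescopes to
\[
\sum_{i=1}^{t}d_{H_i}(v_i)\ \le\ \sum_{i=1}^{t}\Big(\lfloor(n-i+1)^2/4\rfloor-\lfloor(n-i)^2/4\rfloor\Big)\ =\ \lfloor n^2/4\rfloor-\lfloor (n^{*})^2/4\rfloor .
\]

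Because $e(G^{*})=e(G)-\sum_i d_{H_i}(v_i)$ and $e(G)>\lfloor n^2/4\rfloor$, this gives $e(G^{*})>\lfloor (n^{*})^2/4\rfloor$; and since $\lfloor n^2/4\rfloor-\lfloor (n^{*})^2/4\rfloor\le nt/2\le\gamma n^2/2$ while $n\le n^{*}/(1-\gamma)$, also $e(G^{*})\le e(G)\le\lfloor (n^{*})^2/4\rfloor+(2\gamma+4\xi)(n^{*})^2$. The sets $A,B$ are untouched by the peeling, so $G^{*}[A,B]=G[A,B]$ and $G^{*}[A]\subseteq G[A]$, $G^{*}[B]\subseteq G[B]$; hence $\delta(G^{*}[A,B])\ge(\frac12-\alpha)n\ge(\frac12-\alpha)n^{*}$, $\Delta(G^{*}[A]),\Delta(G^{*}[B])\le\beta n\le2\beta n^{*}$, and $|C^{*}|\le\gamma n\le2\gamma n^{*}$, with all these constants still $\ll1$. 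Together with $d_{G^{*}}(v)\ge n^{*}/2$ for $v\in C^{*}$, these are exactly the hypotheses of Lemma~\ref{lower: l3} when $C^{*}\ne\emptyset$, and of Lemma~\ref{lower: l1} when $C^{*}=\emptyset$ (where the conclusion is strict because $e(G^{*})\ne\lfloor (n^{*})^2/4\rfloor$). In either case the number of Gallai $r$-colorings of $G^{*}$ is strictly less than $r^{\lfloor (n^{*})^2/4\rfloor}$.

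Finally, restricting a Gallai $r$-coloring of $H_i$ to $H_{i+1}=H_i-v_i$ yields a Gallai $r$-coloring of $H_{i+1}$, and each coloring of $H_{i+1}$ extends to $H_i$ in at most $r^{d_{H_i}(v_i)}$ ways since only the edges at $v_i$ are free. Iterating from $G^{*}$ back up to $G=H_1$,
\[
\#\{\text{Gallai }r\text{-colorings of }G\}\ \le\ \#\{\text{Gallai }r\text{-colorings of }G^{*}\}\cdot\prod_{i=1}^{t}r^{d_{H_i}(v_i)}\ <\ r^{\lfloor (n^{*})^2/4\rfloor+\sum_i d_{H_i}(v_i)}\ \le\ r^{\lfloor n^2/4\rfloor},
\]
the last inequality being the telescoped bound above. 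The delicate point — the reason for peeling one vertex at a time instead of deleting all light vertices of $C$ simultaneously — is that this is exactly what keeps every intermediate graph in the regime $e>\lfloor(\cdot)^2/4\rfloor$ while leaving a ``budget'' $\lfloor n^2/4\rfloor-\lfloor (n^{*})^2/4\rfloor$ large enough to absorb the colorings regained during re-attachment; both facts come out of the identity $\lfloor m^2/4\rfloor-\lfloor(m-1)^2/4\rfloor=\lfloor m/2\rfloor$. Everything else — checking that the slightly perturbed constants fed into Lemma~\ref{lower: l3} and Lemma~\ref{lower: l1} are still small enough — is routine.
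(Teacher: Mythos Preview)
Your proof is correct and follows the same approach as the paper: greedily peel off low-degree vertices of $C$, apply Lemma~\ref{lower: l3} (or Lemma~\ref{lower: l1}) to the remainder, then reattach while bounding the colorings gained. The only cosmetic difference is that you telescope via the exact identity $\lfloor m^2/4\rfloor-\lfloor(m-1)^2/4\rfloor=\lfloor m/2\rfloor$, whereas the paper uses the slightly looser bound $e(G)-e(G')\le\tfrac12\bigl(\binom{n}{2}-\binom{n'}{2}\bigr)$; both lead to the same conclusion.
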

\begin{proof}
By Lemma~\ref{lower: l1}, we can assume that $|C|>0$ without loss of generality. We begin with the graph $G$, greedily remove a vertex in $C$ with degree strictly less than $|G|/2$ in $G$ to obtain a smaller subgraph. Let $G'$ be the resulting graph when the algorithm terminates, and $n'=|V(G')|$. We remark that $G'$ is not unique and it depends on the order of removing vertices.
Without loss of generality, we can assume that $n'<n$, as otherwise we are done by applying Lemma~\ref{lower: l3} on $G$.

Let $A'=A$, $B'=B$, and $C'=V(G')\cap C$. Clearly, we have $G'=G[A'\cup B'\cup C']$. 
Furthermore, by the mechanics of the algorithm, we have
\begin{equation}\label{l4: Gedge}
e(G)\leq e(G') + \frac12\left(\binom{n}{2} - \binom{n'}{2}\right).
\end{equation}
We first claim that $e(G')> \lfloor (n')^2/4\rfloor$, as otherwise we would have 
\[
e(G)
\leq \lfloor (n')^2/4\rfloor + \frac12\left(\binom{n}{2} - \binom{n'}{2}\right)
\leq \lfloor n^2/4\rfloor,
\]
which contradicts the assumption of the lemma. 
On the other hand, since $n'\geq (1- \gamma)n$, we obtain that
\[
e(G)\leq \lfloor n^2/4\rfloor + \xi n^2\leq \lfloor (n')^2/4\rfloor + \frac{\gamma + 2\xi}{2(1 - \gamma)^2}(n')^2.
\]
Let $\xi'=\frac{\gamma + 2\xi}{2(1 - \gamma)^2}$. Then we have
\begin{equation}\label{l4: edge}
\lfloor (n')^2/4\rfloor <e(G')\leq \lfloor (n')^2/4\rfloor + \xi '(n')^2.
\end{equation}
If $C'=\emptyset$, we obtain a vertex partition $V(G')=A'\cup B'$ satisfying:
\begin{itemize}
  \setlength\itemsep{-0.5ex}
\item $\delta(G'[A', B'])\geq (\frac12 - \alpha)n\geq (\frac12 - \alpha)n'$;
\item $\Delta(G'[A])$, $\Delta(G'[B])\leq \beta n\leq \frac{\beta}{1 - \gamma}n'$.
\end{itemize} 
Together with~(\ref{l4: edge}), by Lemma~\ref{lower: l1}, we obtain that the number of Gallai $r$-colorings of $G'$, denoted by $|\mathcal{C}(G')|$, is strictly less than $r^{\lfloor n^2/4\rfloor}$.
Otherwise, we find the partition $V(G')=A'\cup B' \cup C'$ satisfying:
\begin{itemize}
  \setlength\itemsep{-0.5ex}
\item $\delta(G'[A', B'])\geq (\frac12 - \alpha)n\geq (\frac12 - \alpha)n'$;
\item $\Delta(G'[A])$, $\Delta(G'[B])\leq \frac{\beta}{1 - \gamma}n'$;
\item $0<|C'|\leq \gamma n\leq \frac{\gamma}{1 - \gamma}n'$;
\item for every $v\in C'$, $d(v)\geq \frac{n'}{2}$.
\end{itemize} 
Together with~(\ref{l4: edge}), by Lemma~\ref{lower: l3}, we obtain that $|\mathcal{C}(G')|<r^{\lfloor (n')^2/4\rfloor}$. Combining with~(\ref{l4: Gedge}), we conclude that the number of Gallai $r$-colorings of $G$, denoted by $|\mathcal{C}(G)|$, satisfies
\[
\log_r|\mathcal{C}(G)|\leq \log_r|\mathcal{C}(G')| + (e(G) - e(G'))
< \lfloor (n')^2/4\rfloor + \frac{1}{2}\left(\binom{n}{2} - \binom{n'}{2}\right)
\leq \lfloor n^2/4\rfloor,
\]
which completes the proof.
\end{proof}

Another important tool we need is the stability property of book graphs proved by Bollob{\'a}s and Nikiforov~\cite{BN}. 
\begin{thm}\label{bookstable}\cite{BN}
For every $0 < \alpha < 10^{-5}$ and every graph $G$ of order $n$ with $e(G)\geq (\frac14 - \alpha)n^2$, either
\[
\mathrm{bk}(G)>\left(\frac16 - 2\alpha^{1/3}\right)n
\]
or $G$ contains an induced bipartite graph $G_1$ of order at least $(1 - \alpha^{1/3})n$ and with minimum degree
\[
\delta(G_1)\geq \left(\frac12 - 4\alpha^{1/3}\right)n.
\]
\end{thm}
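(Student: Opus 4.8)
The statement is the book stability theorem of Bollob\'as and Nikiforov~\cite{BN}; in the sequel we use it only as a black box, so here we merely indicate how one would prove it. The plan is to first reduce to the case $\mathrm{bk}(G)\le(\tfrac16-2\alpha^{1/3})n$ (otherwise the first alternative already holds) and then to exhibit the induced bipartite subgraph $G_1$. The key first observation is that a small booksize forces few triangles: since every edge $uv$ of $G$ is the base of a book of size $|N_G(u)\cap N_G(v)|$, writing $t(G)$ for the number of triangles of $G$ one has
\[
3\,t(G)=\sum_{uv\in E(G)}\bigl|N_G(u)\cap N_G(v)\bigr|\ \le\ e(G)\cdot\mathrm{bk}(G),
\]
so $G$ is edge-dense ($e(G)\ge(\tfrac14-\alpha)n^2$) while being almost triangle-free.

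The core of the argument is a quantitative stability statement: an edge-dense, almost triangle-free graph contains a large induced bipartite subgraph of near-maximum minimum degree. One route is to count paths on three vertices: by convexity $\sum_v\binom{d_G(v)}{2}\ge n\binom{2e(G)/n}{2}$, of which exactly $3\,t(G)$ are closed, so $\sum_{uv\notin E(G)}|N_G(u)\cap N_G(v)|$ is at least a fixed positive fraction of $n^3$; hence a large proportion of the non-adjacent pairs are ``approximate twins'' (their neighbourhoods overlap in $\Omega(n)$ vertices), which also pins almost all degrees near $\tfrac n2$. The booksize hypothesis now re-enters in the opposite direction: two vertices whose neighbourhoods overlap in more than $\mathrm{bk}(G)$ vertices must be non-adjacent, so on the set of vertices of degree $\approx\tfrac n2$ the ``approximate twin'' relation is essentially transitive (inclusion--exclusion on three neighbourhoods gives a large common overlap, forcing non-adjacency). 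Thus all but $O(\alpha^{1/3}n)$ vertices split into two classes, each almost independent and each of whose vertices sends all but $O(\alpha^{1/3}n)$ of its edges to the other class; deleting the exceptional vertices and the endpoints of the within-class edges, and re-tuning the $\Theta(\alpha^{1/3})$ constants, leaves the desired $G_1$ of order at least $(1-\alpha^{1/3})n$ with $\delta(G_1)\ge(\tfrac12-4\alpha^{1/3})n$. (Alternatively, one can run a spectral argument: $\lambda_1(G)\ge 2e(G)/n$ is large while $\sum_i\lambda_i^3=6\,t(G)$ is small, which forces $\lambda_n(G)$ close to $-\lambda_1(G)$ and all other eigenvalues small, i.e.\ $G$ close to complete bipartite, and then an expander-mixing-type estimate extracts $G_1$.)

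The hard part will be quantitative: every estimate must depend polynomially in $\alpha$ (indeed as a fixed power of $\alpha$), so one cannot go through the triangle removal lemma — whose quantitative content is merely $o(1)$ — but must exploit the near-extremality of $G$ directly, which is exactly what the path-counting or spectral inputs above achieve. One must also check that the two cleanup passes do not cascade; this holds because each deleted vertex is incident to only $O(\alpha^{1/3}n)$ within-class edges and all degree losses are uniformly $O(\alpha^{1/3}n)$. Choosing the overlap threshold, the degree window, and the two cleanup parameters as comparable powers of $\alpha$ and tracking all constants then yields precisely $\tfrac16-2\alpha^{1/3}$, $1-\alpha^{1/3}$ and $\tfrac12-4\alpha^{1/3}$; the detailed bookkeeping is carried out in~\cite{BN}.
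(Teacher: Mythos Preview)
The paper does not contain a proof of this statement at all: Theorem~\ref{bookstable} is quoted from Bollob\'as and Nikiforov~\cite{BN} and used purely as a black box in the proof of Theorem~\ref{rgraph: low}. You have correctly identified this (``in the sequel we use it only as a black box''), so there is nothing in the present paper to compare your sketch against. Your outline of the Bollob\'as--Nikiforov argument --- small booksize $\Rightarrow$ few triangles via $3t(G)=\sum_{uv\in E}|N(u)\cap N(v)|$, then a quantitative stability step (path-counting or spectral) to extract the large almost-balanced bipartite piece --- is a faithful high-level description of how~\cite{BN} proceeds, and your remark that one must avoid the removal lemma to get polynomial-in-$\alpha$ bounds is exactly the point. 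For the purposes of this paper, however, no proof is required here; a one-line citation suffices.
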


\noindent\textit{Proof of Theorem~\ref{rgraph: low}: } Let $e(G)= \lfloor n^2/4 \rfloor + m$, where $0<m\leq \xi n^2$. We construct a family $\mathcal{B}$ of book graphs by the following algorithm. We start the algorithm with $\mathcal{B}=\emptyset$ and $G_0=G$. In the i-th iteration step, if there exists a book graph $B$ of size $\frac{n}{7}$ in $G_i$, we let $\mathcal{B}=\mathcal{B}\cup \{B\}$, and $G_{i}=G_{i-1}-e$, where $e$ is the base edge of $B$. The algorithm terminates when there is no book graph of size $n/7$. Let $E_0$ be the set of base edges of $\mathcal{B}$, and $\tau=7/(1 - \log_r 3)$.

Suppose that $|\mathcal{B}|\geq 2\tau m$. 
Since $|E_0|=|\mathcal{B}|\geq 2\tau m$, the edge set $E_0$ contains a matching $M$ of size $\frac{|E_0|}{2(n-1) -1}> \tau m/n$. 
Let $\mathcal{B}'$ be the set of book graphs in $\mathcal{B}$ whose base edges are in $M$. Since $M$ is a matching, book graphs in $\mathcal{B}'$ are edge-disjoint. 
Note that for every $B\in \mathcal{B}‘$, the number of $r$-colorings of $B$ without rainbow triangles is at most $r(r + 2(r-1))^{n/7}<r(3r)^{n/7}$. Then the number of Gallai colorings of $G$ is at most
\[
\left(r(3r)^{\frac n 7}\right)^{|M|}r^{e(G) - |M|(1 + 2\cdot \frac n7)}
=r^{\lfloor n^2/4 \rfloor + m - (1 - \log_r 3)|M|\frac n7}
< r^{\lfloor n^2/4 \rfloor + m - m}=r^{\lfloor n^2/4\rfloor}.
\]

It remains to consider the case for $|\mathcal{B}|< 2\tau m$. 
Without loss of generality, we can assume that there is no matching of size greater than $\tau m/n$ in $E_0$.
Let $G'=G - E_0$. Then we have
\[
e(G') > \lfloor n^2/4 \rfloor - \left(2\tau - 1\right) m.
\]
Furthermore, by the construction of $G'$, we obtain that $\mathrm{bk}(G')<n/7$. 
Let $\alpha = \left(2\tau - 1\right) \xi.$
By applying Theorem~\ref{bookstable} on $G'$, we obtain that there is a vertex partition $V(G')=A'\cup B'\cup C'$ with $|C'|\leq \alpha^{1/3}n$, such that $A', B'$ are independent sets, and
\[
\delta(G'[A', B'])\geq \left(\frac12 - 4\alpha^{1/3}\right)n.
\]
Let $G_0$ be the spanning subgraph of $G$ with edge set $E_0$. For a small constant $\beta$ with $\xi\ll \beta\ll1$, let $V_0$ be the set of vertices in $G_0$ with degree more than $\beta n$. Since $|E_0|< 2\tau m\leq 2\tau \xi n^2$, we have $|V_0|\leq (4\tau\xi/\beta) n\leq \beta n$. Let $A=A'\setminus V_0$, $B=B'\setminus V_0$, and $C=C'\cup V_0$. Then we obtain a vertex partition $V(G)=A\cup B\cup C$ satisfying the following conditions:
\begin{itemize}
  \setlength\itemsep{-0.5ex}
\item $\delta(G[A, B])\geq (\frac12 - 4\alpha^{1/3} - \beta)n$;
\item $\Delta(G[A])$, $\Delta(G[B])\leq \beta n$;
\item $|C|\leq (\alpha^{1/3} + \beta)n.$
\end{itemize}
By Lemma~\ref{lower: l4}, we obtain that the number of Gallai $r$-colorings of $G$ is strictly less than $r^{\lfloor n^2/4\rfloor}$.
\qed

\subsection{Proof of Theorem~\ref{rgraph: middle}}
We say that a graph $G$ is \textit{$t$-far} from being $k$-partite if $\chi(G')>k$ for every subgraph $G'\subset G$ with $e(G') >  e(G) - t$. We will use the following theorem of Balogh, Bushaw, Collares, Liu, Morris, and Sharifzadeh~\cite{balogh2017typical}.
\begin{thm}\label{thm: tfar}\cite{balogh2017typical}
For every $n, k, t \in \mathbb{N}$, the following holds. Every graph $G$ of order $n$ which is $t$-far from being $k$-partite contains at least
\[
\frac{n^{k-1}}{e^{2k}\cdot k!}\left(e(G) + t - \left(1 - \frac 1 k \right) \frac{n^2}{2}\right)
\]
copies of $K_{k+1}$.
\end{thm}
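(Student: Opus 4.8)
The statement is a supersaturation result, and the hypothesis ``$t$-far from $k$-partite'' is exactly the robustness one needs to run an induction. Write $z_k(G)$ for the minimum number of edges whose deletion makes $G$ $k$-colourable; unwinding the definition, $G$ is $t$-far from being $k$-partite if and only if $z_k(G)\ge t$, and $z_k(G)=\min\{\,\#(\text{edges lying inside a part})\ :\ V(G)=V_1\cup\dots\cup V_k\,\}$. Since the claimed lower bound is non-decreasing in $t$, I may assume $t=z_k(G)$. Two elementary global facts will be used throughout: a uniformly random $k$-colouring of $V(G)$ has $e(G)/k$ monochromatic edges in expectation, so $z_k(G)\le e(G)/k$; and by Tur\'an's theorem the largest $k$-colourable subgraph of $G$ has at most $(1-1/k)n^2/2$ edges, so $e(G)-z_k(G)\le(1-1/k)n^2/2$, whence $\Psi:=e(G)+t-(1-1/k)n^2/2\le 2z_k(G)$. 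If $\Psi\le 0$ the claimed bound is non-positive and there is nothing to prove; otherwise the two facts give $e(G)>\tfrac{k-1}{2(k+1)}n^2$, and hence $\Delta(G)\ge 2e(G)/n>\tfrac{k-1}{k+1}n$.

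I would prove the bound by induction on $k$, and for each fixed $k$ by induction on $n$. The case $k=1$ is immediate (there the claimed bound equals $\tfrac1{e^2}(e(G)+t)=\tfrac2{e^2}e(G)<e(G)$, and $G$ has $e(G)$ copies of $K_2$), and the base cases in $n$ are routine. For the inductive step, fix a vertex $v$ and combine
\[
(\text{copies of }K_{k+1}\text{ in }G)=(\text{copies of }K_{k+1}\text{ in }G-v)+(\text{copies of }K_k\text{ in }G[N(v)])
\]
with the two robustness estimates $z_k(G-v)\ge z_k(G)-d(v)$ and $z_{k-1}(G[N(v)])\ge z_k(G)-\bigl(e(G)-e(G[N[v]])\bigr)$; the latter follows by extending a near-optimal $(k-1)$-colouring of $G[N(v)]$ by a fresh colour on $v$ and then discarding the edges of $G$ incident to $V\setminus N[v]$. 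Feeding the inductive hypotheses into both terms --- the induction on $n$ into the first, the induction on $k$ into the second --- and choosing $v$ well, the two contributions should add up to at least $\tfrac{n^{k-1}}{e^{2k}k!}\Psi$; the constants $e^{2k}k!$ and the power $n^{k-1}$ are calibrated precisely so that the crude estimates $(1-1/n)^{k-1}\ge 1-(k-1)/n$ and $(1-\tfrac1k)-(1-\tfrac1{k-1})=\tfrac1{k(k-1)}$ make the recursion close.

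The heart of the matter --- and the main obstacle --- is that the balance between the two terms shifts with the size of $z_k(G)$, and all regimes must be handled uniformly. When $z_k(G)$ is small compared to $n^2$ (so $G$ is near-extremal for Tur\'an's theorem), deleting a suitably chosen vertex already suffices: $G-v$ then retains almost all of the deficiency $\Psi$ and the $(n-1)^{k-1}$-versus-$n^{k-1}$ loss is negligible against $\Psi=O(z_k(G))$. When $z_k(G)$ is a constant fraction of $n^2$, the removal term alone is insufficient and one must use the $K_k(G[N(v)])$ term: either exploit a high-degree vertex $v$ --- guaranteed by $\Delta(G)>\tfrac{k-1}{k+1}n$ --- for which $e(G)-e(G[N[v]])$ is small, so that $G[N(v)]$ inherits essentially all of $z_k(G)$ as $(k-1)$-partiteness deficiency; or, in the intermediate range where no single neighbourhood visibly carries the deficiency, average the identity $(\text{copies of }K_{k+1})=\tfrac1{k+1}\sum_v(\text{copies of }K_k\text{ in }G[N(v)])$ over all $v$ and extract the deficiency from a max-cut $k$-partition $V_1\cup\dots\cup V_k$ of $G$: each of the $z_k(G)$ edges lying inside a part, together with one vertex from each of many other parts (the max-cut property forces $d(v,V_i)\le d(v)/k$ for $v\in V_i$, so neighbourhoods are genuinely spread across the parts), seeds copies of $K_{k+1}$ distributed among enough distinct neighbourhoods that the per-vertex inductive bound can be summed with no loss. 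Checking that this matching-style count produces exactly the factor $n^{k-1}/(e^{2k}k!)$ and dovetails with the near-extremal case is the delicate computation; the remainder is routine manipulation of the inequalities above.
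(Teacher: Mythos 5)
This statement is not proved in the paper at all: it is quoted verbatim from the cited work of Balogh, Bushaw, Collares, Liu, Morris and Sharifzadeh \cite{balogh2017typical}, so there is no internal proof to compare you against, and your proposal has to stand on its own. Your preliminary reductions are fine (the equivalence of ``$t$-far'' with $z_k(G)\ge t$, the bounds $z_k(G)\le e(G)/k$ and $e(G)-z_k(G)\le(1-\tfrac1k)\tfrac{n^2}{2}$, hence $\Psi\le 2z_k(G)$, the identity splitting the $K_{k+1}$-count over $G-v$ and $G[N(v)]$, the two robustness inequalities for $z_k(G-v)$ and $z_{k-1}(G[N(v)])$, and the case $k=1$). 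But what follows is a strategy, not a proof: the entire inductive step --- the only place where the specific factor $n^{k-1}/(e^{2k}k!)$ could be earned --- is asserted (``the two contributions should add up''), and you yourself defer ``the delicate computation'' of how the three regimes dovetail. That computation is the theorem; without it nothing has been proved.

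Moreover, two of the three regimes, as sketched, would not go through in the form you state them. For the ``high-degree vertex'' route you claim that $\Delta(G)>\tfrac{k-1}{k+1}n$ yields a vertex $v$ for which $e(G)-e(G[N[v]])$ is small, so that $G[N(v)]$ ``inherits essentially all of $z_k(G)$''; but with $d(v)\approx\tfrac{k-1}{k+1}n$ the set $V\setminus N[v]$ has $\Theta(n)$ vertices and can carry $\Theta(n^2)$ incident edges, which swamps $z_k(G)\le e(G)/k$, so the inequality $z_{k-1}(G[N(v)])\ge z_k(G)-\bigl(e(G)-e(G[N[v]])\bigr)$ is typically vacuous there --- you would need $d(v)=n-o(n)$, which is not guaranteed. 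In the ``intermediate'' regime, an internal edge $uv$ of a max-cut $k$-partition together with one vertex from each other part does not automatically span a $K_{k+1}$: you need all those vertices to be pairwise adjacent and adjacent to both $u$ and $v$, which requires a minimum-degree hypothesis (or a prior cleaning step that itself perturbs $\Psi$ and the prefactor $n^{k-1}$) plus a careful count of how often each clique is generated; the max-cut property $d(v,V_i)\le d(v)/k$ alone does not provide this. Similarly, the induction on $n$ only closes when $\Psi(G-v)\ge(1+\tfrac1{n-1})^{k-1}\Psi(G)$, which forces $d(v)$ to be well below $\tfrac12(1-\tfrac1k)n$ and so only covers the near-extremal regime, exactly as you suspect --- but then the burden falls on the two unproved mechanisms above. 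So the proposal identifies a plausible framework (indeed one in the same spirit as the published proof) but leaves the quantitative heart missing, and the shortcuts offered in its place would fail as stated.
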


\begin{prop}\label{prop: edge}
Let $n \in \mathbb{N}$ and $0<\varepsilon\leq 1$. Every graph $F$ on at least $\varepsilon n$ vertices, which contains at most $n^{-1/3}\binom{n}{3}$ triangles, satisfies
\[
e(F)\leq \frac{|F|^2}{4} + \frac{e^4}{6n^{1/3}\varepsilon^3}|F|^2.
\]
\end{prop}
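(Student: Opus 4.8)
The plan is to deduce the edge bound from the supersaturation statement Theorem~\ref{thm: tfar}, applied with $k=2$ to $F$ itself, using the dichotomy encoded in the notion of being $t$-far from bipartite. Write $m=|F|$, so $m\ge\varepsilon n$, and set
\[
t=\frac{e^4\binom{n}{3}}{m\,n^{1/3}}
\]
(here we allow $t$ to be a positive real; Theorem~\ref{thm: tfar} holds verbatim in this generality, or one rounds $t$ up at the cost of an $O(1)$ additive term, absorbed by the slack in the estimate $\binom n3<n^3/6$ used below). The key feature of this choice is the identity $n^{-1/3}\binom{n}{3}=\tfrac{mt}{e^4}$. My first goal is to show $e(F)\le m^2/4+t$, after which bounding the error term is elementary.

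To get $e(F)\le m^2/4+t$, I would split according to whether $F$ is $t$-far from being bipartite. If it is \emph{not}, then by definition there is a subgraph $F'\subseteq F$ with $e(F')>e(F)-t$ and $\chi(F')\le2$; since a bipartite graph on at most $m$ vertices has at most $m^2/4$ edges, $e(F)-t<e(F')\le m^2/4$, hence $e(F)<m^2/4+t$. If $F$ \emph{is} $t$-far from bipartite, then Theorem~\ref{thm: tfar}, with its ``$n$'' taken to be $m$ and $k=2$, yields that $F$ contains at least
\[
\frac{m}{2e^4}\left(e(F)+t-\frac{m^2}{4}\right)
\]
triangles. Combining this with the hypothesis that $F$ has at most $n^{-1/3}\binom{n}{3}=\tfrac{mt}{e^4}$ triangles (the claim being trivial when $e(F)+t-m^2/4\le0$) gives $e(F)+t-m^2/4\le2t$, that is, $e(F)\le m^2/4+t$.

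Finally, I would bound $t$ by plugging in $\binom{n}{3}<n^3/6$ and then $n\le m/\varepsilon$:
\[
t=\frac{e^4\binom{n}{3}}{m\,n^{1/3}}<\frac{e^4n^3}{6\,m\,n^{1/3}}\le\frac{e^4(m/\varepsilon)^3}{6\,m\,n^{1/3}}=\frac{e^4}{6\,n^{1/3}\varepsilon^3}\,m^2,
\]
which together with the previous step gives exactly $e(F)\le|F|^2/4+\tfrac{e^4}{6n^{1/3}\varepsilon^3}|F|^2$. There is no serious obstacle here; the only points requiring care are calibrating $t$ so that the supersaturation count $\tfrac{mt}{e^4}$ matches the permitted triangle count $n^{-1/3}\binom{n}{3}$ exactly, and the passage from $n^{8/3}/m$ to $m^2/n^{1/3}$, which is precisely where the hypothesis $|F|\ge\varepsilon n$ enters.
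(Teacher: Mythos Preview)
Your argument is correct and follows essentially the same approach as the paper: both apply Theorem~\ref{thm: tfar} with $k=2$ to $F$, using that a graph which is not $t$-far from bipartite has at most $|F|^2/4+t$ edges, while a graph which is $t$-far has many triangles. The only cosmetic difference is that the paper sets $t=\frac{e^4}{6n^{1/3}\varepsilon^3}|F|^2$ directly and argues by contradiction, whereas you calibrate $t=\frac{e^4\binom{n}{3}}{m\,n^{1/3}}$ to match the triangle hypothesis exactly and then bound it afterwards; the substance is identical.
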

\begin{proof}
Let $t= \frac{e^4}{6n^{1/3}\varepsilon^3}|F|^2$. Assume that $e(F) > \frac{|F|^2}{4} + t$. Then $F$ is $t$-far from being bipartite. By Theorem~\ref{thm: tfar}, the number of triangles in $F$ is at least 
\[
\frac{|F|}{2e^{4}}\left(e(F) + t - \frac{|F|^2}{4}\right)>\frac{|F|}{2e^{4}} \cdot 2t = \frac{1}{6n^{1/3}\varepsilon^3}|F|^3>  n^{-1/3}\binom{n}{3},
\]
which gives a contradiction.
\end{proof}

For an $r$-template $P$ of order $n$, we say that an edge $e$ of $K_n$ is an \textit{$r$-edge} of $P$ if $|P(e)|\geq 3$. An $r$-edge $e$ is \textit{typical} if the number of rainbow triangles containing $e$ is at most $n^{11/12}$. We then immediately obtain the following proposition.

\begin{prop}\label{prop: redge}
For an $r$-template of order $n$ containing at most $n^{-1/3}\binom{n}{3}$ rainbow triangles, the number of $r$-edges of $P$, which is not typical, is at most $n^{11/6}$.
\end{prop}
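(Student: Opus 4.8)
The plan is to run a one-step double-counting argument between non-typical $r$-edges and rainbow triangles. Write $N$ for the number of $r$-edges of $P$ that are not typical, and consider the set of pairs $(e,T)$ in which $e$ is a non-typical $r$-edge of $P$ and $T$ is a rainbow triangle of $P$ whose underlying triangle in $K_n$ has $e$ as one of its three edges. I would estimate the size of this set in two ways.

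On the one hand, by the definition of ``typical'', every non-typical $r$-edge $e$ is contained in strictly more than $n^{11/12}$ rainbow triangles of $P$, so the number of such pairs is at least $N\cdot n^{11/12}$. On the other hand, each rainbow triangle of $P$ has exactly three edges, so it can be paired with at most three non-typical $r$-edges; hence the number of pairs is at most $3\,\mathrm{RT}(P)$. Combining the two bounds gives $N\cdot n^{11/12}\le 3\,\mathrm{RT}(P)$. Using the hypothesis $\mathrm{RT}(P)\le n^{-1/3}\binom{n}{3}\le n^{8/3}/6$, we conclude
\[
N\;\le\;\frac{3\,\mathrm{RT}(P)}{n^{11/12}}\;\le\;\frac{3\,n^{8/3}}{6\,n^{11/12}}\;=\;\tfrac12\,n^{7/4}\;\le\;n^{11/6},
\]
which is exactly the asserted bound.

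There is essentially no obstacle here: once the definitions are unwound the argument is a routine averaging/supersaturation-free double count, entirely analogous in spirit to the counting step in Proposition~\ref{less3color}. The only points needing a little care are using the correct direction of the inequality in the definition of a typical $r$-edge (so that a non-typical edge lies in \emph{more than} $n^{11/12}$ rainbow triangles), and verifying that the exponent $7/4$ coming out of the computation is at most $11/6$, so that the stated bound $n^{11/6}$ is not violated and indeed has a bit of room to spare.
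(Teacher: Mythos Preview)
Your proposal is correct and is exactly the intended argument: the paper gives no explicit proof, stating only that the proposition follows immediately from the definitions, and the obvious way to unwind this is precisely the double count you wrote. Your computation of the exponent $8/3-11/12=7/4\le 11/6$ is accurate, so the bound holds with room to spare.
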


We now prove the following lemma.
\begin{lemma}\label{lemma: middle}
Let $n, r\in \mathbb{N}$ with $r\geq 4$, and $n^{-1/33}\ll \xi \leq \frac{1}{2}\log^{-11}n\ll 1$. Assume that $G$ is a graph of order $n$ with $(\frac14 + 3\xi)n^2\leq e(G) \leq (\frac12- 3\xi)n^2$, and $P$ is a Gallai $r$-template of $G$. Then, for sufficiently large $n$, 
\[
\log_r|\mathrm{Ga}(P, G)| \leq \frac{n^2}{4} - \xi^3\frac{n^2}{2} + 4n^{23/12}.
\]
\end{lemma}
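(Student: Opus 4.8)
The plan is to bound $|\mathrm{Ga}(P,G)|$ by a product over all edges of $K_n$ of the sizes of their palettes, and then argue that the $r$-edges of $P$ (those with $|P(e)|\ge 3$) must be rare, because too many of them would force more triangles in the underlying ``$r$-edge graph'' than a Gallai template can afford. Concretely, let $F$ be the spanning subgraph of $K_n$ whose edge set consists of the typical $r$-edges of $P$, i.e.\ those $r$-edges contained in at most $n^{11/12}$ rainbow triangles. By Proposition~\ref{prop: redge} the non-typical $r$-edges number at most $n^{11/6}$, so they contribute at most a factor $r^{n^{11/6}}$ to the palette product, which is absorbed into the error term $4n^{23/12}$. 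For $F$ itself, first I would show $F$ has at most $n^{-1/3}\binom{n}{3}$ triangles (up to a negligible correction): every triangle of $F$ whose three edges are typical $r$-edges either is a rainbow triangle of $P$ or, if not rainbow, still each of its edges lies in at most $n^{11/12}$ rainbow triangles — this needs a short counting argument to conclude that the number of all-$r$-edge triangles is $O(n^{11/12}\cdot\mathrm{RT}(P)/\text{something})+\mathrm{RT}(P) \ll n^{-1/3}\binom{n}{3}$, using $\mathrm{RT}(P)\le n^{-1/3}\binom{n}{3}$ and the typicality bound. With this in hand, Proposition~\ref{prop: edge} (applied with an appropriate $\varepsilon$, say $\varepsilon$ a small constant since $|F|\le n$, but actually we may simply take $F$ on all $n$ vertices so $|F|\le n$ and $\varepsilon=1$) yields $e(F)\le n^2/4 + \frac{e^4}{6}n^{-1/3}n^2 = n^2/4 + O(n^{5/3})$.

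Now I set up the palette product. Using the weight function $w(e)=\max\{1,|P(e)|\}$ as in Lemmas~\ref{3graph: baltri} and~\ref{lemma: rdense}, we have $|\mathrm{Ga}(P,G)|\le \prod_{e\in E(K_n)} w(e)$. Split the product according to whether $e$ is an $r$-edge or not. For a non-$r$-edge, $w(e)\le 2$; for an $r$-edge, $w(e)\le r$. Thus
\[
\log_r|\mathrm{Ga}(P,G)| \le (\#\,r\text{-edges}) + \frac{1}{\log r}\bigl(\tbinom n2 - \#\,r\text{-edges}\bigr)\le \frac{1}{2}\binom n2 + \tfrac12(\#\,r\text{-edges}),
\]
since $\log r\ge 2$ for $r\ge 4$ and so $1 - 1/\log r \le 1/2$ on the second term while the first term has coefficient $1$. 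Wait — this gives roughly $\binom n2/\log r + (\#\,r\text{-edges})(1-1/\log r)$; bounding $\#\,r\text{-edges}\le e(F)+n^{11/6}\le n^2/4+O(n^{5/3})$ would only give about $n^2/4 + n^2/(2\log r)$, which is too weak. The fix is that we do \emph{not} want to use $w(e)\le 2$ for all non-$r$-edges: instead, I would organize the product over \emph{triangles} of $K_n$, exactly as in the proof of Lemma~\ref{lemma: rdense}, writing $\log_r|\mathrm{Ga}(P,G)|\le \frac{1}{n-2}\sum_{T}\sum_{e\in T}\log_r w(e)$ and bounding each triangle's contribution by the partition~(\ref{eq: tripartition}): $\mathcal{T}_1$ contributes $3\log_r 2$, $\mathcal{T}_2$ contributes $2$, $\mathcal{T}_3$ contributes $1+\log_r 2$, etc. The crucial gain over Lemma~\ref{lemma: rdense} is that here the underlying graph $G$ is sparse, $e(G)\le(\frac12-3\xi)n^2$, so a $(1-o(1))$ fraction of triangles $T$ of $K_n$ have an edge \emph{outside} $G$, hence with empty palette and $w(e)=1$. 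More precisely: the number of triangles of $K_n$ all of whose edges lie in $G$ is, by Proposition~\ref{prop: edge} applied to $G$ itself combined with a supersaturation-type count — actually directly because $e(G)$ is bounded away from $\binom n2$ — at most $O(\xi^{-?})$... the honest route is Kruskal--Katona / Lovász (Theorem~\ref{trinum}): a graph with $e(G)\le(\frac12-3\xi)n^2 = (1-6\xi)\binom n2\cdot(1+o(1))$ edges has at most roughly $(1-6\xi)^{3/2}\binom n3 \le (1-6\xi)\binom n3$ triangles, so at least $6\xi\binom n3$ triples $\{a,b,c\}$ miss an edge of $G$ and contribute $w$-weight $\le 2\cdot 2\cdot 1$ rather than $\le 2^3$. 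Each such triangle thus saves a factor of roughly $\log_r 2$ in the exponent relative to the all-2 bound.

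Carrying this out: let $\mathcal{T}_0$ be the triangles of $K_n$ with an edge not in $G$; $|\mathcal{T}_0|\ge 6\xi\binom n3$ by the above. For $T\in\mathcal{T}_0$, $\sum_{e\in T}\log_r w(e)\le 2\log_r 2$ (two non-empty-palette edges, one of weight $1$; and if those two edges are $r$-edges we instead bound by $2\log_r r=2$, but such triangles, having an $r$-edge, number $O(n^{5/3}\cdot n)=O(n^{8/3})=o(\xi n^3)$ and are absorbed). For the remaining triangles use the trivial bound $\sum_{e\in T}\log_r w(e)\le 3\log_r r$... no — we need $\le 3\log_r 2$ for a $(1-o(1))$ fraction, which holds as long as the triangle has no $r$-edge, and the number of triangles with an $r$-edge is $O(n^{8/3})$. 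Putting it together:
\[
\log_r|\mathrm{Ga}(P,G)| \le \frac{1}{n-2}\Bigl[\,|\mathcal{T}_0|\cdot 2\log_r 2 + (\tbinom n3 - |\mathcal{T}_0|)\cdot 3\log_r 2 + O(n^{8/3})\cdot 3\Bigr] \le \frac{3\log_r 2}{n-2}\binom n3 - \frac{\log_r 2}{n-2}\cdot 6\xi\binom n3 + O(n^{5/3}).
\]
Since $\frac{3}{n-2}\binom n3 = \binom n2 = \frac{n^2}{2}(1+o(1))$ and $\log_r 2 = 1/\log r \le 1/2$, the main term is $\le \frac{n^2}{4}$, and the saving term is $-\frac{1}{\log r}\cdot \xi\cdot \frac{n^2}{1}(1+o(1)) \ge -\xi n^2/\log r$; to land the stated bound $\frac{n^2}{4} - \xi^3 n^2/2$ one notes $\xi/\log r \ge \xi^3/2$ trivially since $\xi\le \frac12\log^{-11}n < 1$ and $\log r\le$ a constant — so for $n$ large, $\xi/\log r \ge \xi^3/2$, giving the claimed inequality after folding the $O(n^{8/3})\cdot 3/(n-2)+O(n^{5/3}) = O(n^{5/3})\ll 4n^{23/12}$ and the $r^{n^{11/6}}$-factor from the non-typical $r$-edges into $4n^{23/12}$.

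\textbf{Main obstacle.} The delicate point is the bookkeeping around the $r$-edges: one must simultaneously (a) discard non-typical $r$-edges using Proposition~\ref{prop: redge} at a cost of only $r^{n^{11/6}}$, (b) show the typical-$r$-edge subgraph $F$ has $\le n^{-1/3}\binom n3$ triangles so that Proposition~\ref{prop: edge} applies — this requires carefully accounting for triangles with one or two typical $r$-edges that are nonetheless not rainbow in $P$, and arguing there are few of them via the $n^{11/12}$ typicality bound together with $\mathrm{RT}(P)\le n^{-1/3}\binom n3$ — and (c) verify that triangles of $K_n$ containing \emph{any} $r$-edge are negligible in number ($O(n^{8/3})$) compared to the $\xi\binom n3$ savings, which is where the hypothesis $\xi\gg n^{-1/33}$ is used (it guarantees $\xi n^3 \gg n^{23/12}\gg n^{8/3}$... one checks $3-1/33 = 98/33 > 23/12$, so $\xi n^3 \ge n^{98/33}$ dominates $n^{23/12}$). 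Getting the error term to be exactly of the form $4n^{23/12}$ rather than some larger polynomial requires being slightly careful that the dominant error is the $n^{11/6}$-sized set of bad $r$-edges, each contributing a factor $r$ hence $\log_r$-cost $1$, times the $\frac{1}{n-2}\binom{\cdot}{\cdot}$ normalization — actually the $r^{n^{11/6}}$ factor contributes $n^{11/6}$ to $\log_r$, and $n^{11/6} = n^{22/12} < n^{23/12}$, with room to spare for the lower-order terms; this is why the constant $4$ suffices.
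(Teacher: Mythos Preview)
Your approach has a genuine gap that breaks the argument. You claim that the number of triangles of $K_n$ containing an $r$-edge is $O(n^{5/3}\cdot n)=O(n^{8/3})$, which would require the number of $r$-edges to be $O(n^{5/3})$. But you only established $e(F)\le n^2/4+O(n^{5/3})$ via Proposition~\ref{prop: edge}; this does \emph{not} say $e(F)=O(n^{5/3})$. In fact the number of $r$-edges can be of order $n^2$: take $V=A\cup B\cup C$ with $|A|=|B|=(1-\epsilon)n/2$, $|C|=\epsilon n$ where $\epsilon\approx 6\xi$, let $G=K_{A,B}\cup K_{C,A\cup B}$, and set $P(e)=[r]$ for $e\in K_{A,B}$, $P(e)=\{1\}$ for $e$ between $C$ and $A\cup B$, and $P(e)=\emptyset$ otherwise. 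One checks $\mathrm{RT}(P)=0$, so $P$ is a Gallai $r$-template of $G$, yet there are $|A||B|\approx n^2/4$ $r$-edges and hence $\Theta(n^3)$ triangles of $K_n$ containing an $r$-edge --- for instance every triple with two vertices in $A$ and one in $B$. Your triangle-averaging estimate then loses a $\Theta(n^2)$ term, and the argument collapses.

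More broadly, the triangle-averaging identity you use is exactly the edge-product bound $\log_r|\mathrm{Ga}(P,G)|\le\sum_{e}\log_r w(e)$, and this sum can genuinely be close to $n^2/4$ (as in the example above, where it equals $|A||B|$). So the method cannot succeed without a structural argument tying the number and location of $r$-edges to the non-$r$-edges of $G$, which you have not supplied. The paper proceeds entirely differently: it runs an iterative algorithm that strips off either a vertex of degree at most $(\tfrac12-\xi^2)(|G_i|-1)$ or a pair of vertices joined by a typical $r$-edge with at least $2\xi^2(|G_i|-2)$ common neighbours, proving (Claim~\ref{claim: count}) that each removal contributes at most a factor $r^{(\frac12-\xi^2)(\cdot)}$. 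The residual graph $G'$ is then handled in four cases according to its size $k$ and edge count, crucially invoking Theorems~\ref{rgraph: high} and~\ref{rgraph: low} when $e(G')$ is near $\tfrac12 k^2$ or $\tfrac14 k^2$, and in the remaining case exploiting that all typical $r$-edges of $G'$ lie inside a set $A$ of low-degree vertices (Claim~\ref{claim: setA}) together with Proposition~\ref{prop: edge} applied to $G'[A]$. The hypothesis $\xi\gg n^{-1/33}$ is used in this last case (Subcase~4.2) to absorb the error from Proposition~\ref{prop: edge}, not in the way you suggest.
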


\begin{proof}
We first construct a subset $I$ of $[n]$ and a sequence of graphs $\{G_0, G_1, \ldots, G_{\ell}\}$ by the following algorithm. We start the algorithm with $I = \emptyset$ and $G_0 = G$. In the $i$-th iteration step, we either add a vertex $v$ to $I$, whose degree is at most $(\frac12 - \xi^2)(|G_i|-1)$ in the graph $G_i$, or add a pair of vertices $\{u, v\}$ to $I$, where $uv$ is a typical $r$-edge satisfying $|N_{G_i}(u) \cap N_{G_i}(v)|\geq 2\xi^2(|G_i| - 2)$. 
In both cases, we define $G_{i+1}= G - I$. The algorithm terminates when neither of the above types of vertices exists. 
 
Assume that the algorithm terminates after $\ell$ steps. Let $G' = G_{\ell}$ and $k=|G'|$. We now make the following claim.
\begin{claim}\label{claim: count}
\[
\log_r |\mathrm{Ga}(P, G)|\leq \left(\frac12 - \xi^2\right)\left(\frac{n^2}{2} - \frac{k^2}{2}\right) + 3n^{23/12} + \log_r |\mathrm{Ga}(P, G')|.
\]
\end{claim}
\noindent\textit{Proof.} In the $i$-th iteration step of the above algorithm, if we add to $I$ a single vertex $v$, then the number of ways to color the incident edges of $v$ in $G_i$ satisfies
\[
\log_r\textstyle\prod_{\substack{e \text{ is incident to } \\ v \text{ in } G_i}}|P(e)|\leq d_{G_i}(v)\leq (\frac12 - \xi^2)(|G_i|-1).
\]
Now we assume that what we add is a pair of vertices $\{u, v\}$. 
For every $w\in N_{G_i}(u) \cap N_{G_i}(v)$, vertices $uvw$ either span a rainbow triangle in $P$, or satisfy $|P(uw)|=|P(vw)|=1$. Together with the fact that $uv$ is a typical $r$-edge, we obtain that the number of ways to color the edges, which are incident to $v$ or $u$ in $G_i$, satisfies
\begin{equation*}
\arraycolsep=1.4pt\def\arraystretch{1.2}
\begin{array}{ll}
\log_r \textstyle\prod_{\substack{e \text{ is incident to } \\ u \text{ or } v \text{ in } G_i}}|P(e)|&
\leq |G_i| - 2 -  |N_{G_i}(u) \cap N_{G_i}(v)| + 2n^{11/12} + 1 \\
&\leq (1 -2\xi^2)(|G_i| - 2) + 2n^{11/12} + 1.
\end{array}
\end{equation*}
From the above discussion, we conclude that the number of ways to color edges in $E(G) - E(G')$ satisfies 
\[
\log_r\textstyle\prod_{e \in E(G) - E(G')}|P(e)|
\leq \left(\frac12 - \xi^2\right)\left(\frac{n^2}{2} - \frac{k^2}{2}\right) + n( 1 +  2n^{11/12}),
\] which implies the claim.\qed\\

We now split the proof into several cases.\\

\noindent\textbf{Case 1: } $k\leq \xi^2 n$.\\
Then $|\mathrm{Ga}(P, G')|\leq r^{k^2/2}\leq r^{\xi^4n^2/2},$ and therefore by Claim~\ref{claim: count} and $\xi\ll 1$, we obtain that 
\[
\log_r |\mathrm{Ga}(P, G)|\leq \left(\frac12 - \xi^2\right)\frac{n^2}{2} + 3n^{23/12} + \xi^4n^2/2
\leq \frac{n^2}{4}  - \xi^2\frac{n^2}{4} + 3n^{23/12}.
\]
\\
\noindent\textbf{Case 2: } $e(G')>\left(\frac12 - 2\xi\right)k^2$ and $k> \xi^2 n$.\\
Since $2\xi\leq \log^{-11}n \leq  \log^{-11}k$, for sufficiently large $n$, Theorem~\ref{rgraph: high} indicates that $|\mathrm{Ga}(P, G')|\leq r^{k^2/4}$.
We claim that $k\leq (1 - \xi)n$, as otherwise we would have
\[
e(G)\geq e(G')> \left(\frac12 - 2\xi\right)k^2 >\left(\frac12 - 2\xi\right)(1 - \xi)^2 n^2\geq \left(\frac12 - 3\xi\right)n^2,
\]
which is contradiction with the assumption of the lemma. Therefore, by Claim~\ref{claim: count}, we obtain that
\[\arraycolsep=1.4pt\def\arraystretch{1.2}
\begin{array}{ll}
\log_r|\mathrm{Ga}(P, G)|&\leq \left(\frac12 - \xi^2\right)\left(\frac{n^2}{2} - \frac{k^2}{2}\right) + 3n^{23/12} +  \frac{k^2}{4}
\leq \frac{n^2}{4} - \xi^2\frac{n^2}{2} + \xi^2\frac{k^2}{2} + 3n^{23/12}\\
&\leq \frac{n^2}{4} - \xi^2\frac{n^2}{2} + \xi^2(1- \xi)^2\frac{n^2}{2} + 3n^{23/12}
\leq \frac{n^2}{4} - \xi^3\frac{n^2}{2} + 3n^{23/12}.
\end{array}
\]
\\
\noindent\textbf{Case 3: } $e(G')< \left(\frac14 + 2\xi\right)k^2$ and $k> \xi^2 n$.\\
Since $2\xi\ll 1$, Theorem~\ref{rgraph: low} indicates that $|\mathrm{Ga}(P, G')|\leq r^{k^2/4}$.
We claim that $k\leq (1 - \xi)n$, as otherwise we would have
\[\arraycolsep=1.4pt\def\arraystretch{1.2}
\begin{array}{ll}
e(G)&< \left(\frac{n^2}{2} - \frac{k^2}{2}\right) + \left(\frac14 + 2\xi\right)k^2 < \frac{n^2}{2} - \left(\frac14 - 2\xi\right)k^2\\
&<\frac{n^2}{2} - \left(\frac14 - 2\xi\right)(1 - \xi)^2n^2\leq \frac{n^2}{2} - \left(\frac14 - 3\xi\right)n^2=\left(\frac14 + 3\xi\right)n^2,
\end{array}
\]
which is contradiction with the assumption of the lemma. Similarly, as in Case 2, we obtain that
\[
\log_r |\mathrm{Ga}(P, G)|\leq \left(\frac12 - \xi^2\right)\left(\frac{n^2}{2} - \frac{k^2}{2}\right) + 3n^{23/12} + \frac{k^2}{4}
\leq \frac{n^2}{4} - \xi^3\frac{n^2}{2} + 3n^{23/12}.
\]
\\
\noindent\textbf{Case 4: } $(\frac14 + 2\xi)k^2\leq e(G')\leq (\frac12 - 2\xi)k^2$ and $k> \xi^2 n$.\\
Denote by $e_r(G')$ the number of $r$-edges of $P$ in $G'$. Let $A = \{v\in V(G') \mid d_{G'}(v)\leq \left(\frac12 + \xi\right)k\}$. 
\begin{claim}\label{claim: setA}
All the typical $r$-edges of $G'$ have both endpoints in $A$.
\end{claim}
\noindent\textit{Proof.}
First, by the construction of $G'$, we have the following two properties: for  every $v\in V(G')$,
\begin{equation}\label{eq: mindegree}
d_{G'}(v)> \left( \frac12 - \xi^2\right)(k -1),
\end{equation}
and for every typical $r$-edge $uv$ in  $G'$, 
\begin{equation}\label{eq: sumdegree}
d_{G'}(u) + d_{G'}(v) \leq 2 + (k-2) + |N_{G_i}(u) \cap N_{G_i}(v)|< (1 + 2\xi^2)k.
\end{equation}
Suppose that there exists a typical $r$-edge $uv$ such that $u$ is not in $A$, i.e.~$d_{G'}(u)> \left(\frac12 + \xi\right)k$. Then by~(\ref{eq: mindegree}) and $\xi \ll 1$, we  have 
\[
d_{G'}(u) + d_{G'}(v) >\left(\frac12 + \xi\right)k +  \left( \frac12 - \xi^2\right)(k -1)>( 1 + 2\xi^2)k,
\]
which contradicts~(\ref{eq: sumdegree}).\qed\\

\noindent\textbf{Subcase 4.1: } $|A|\leq \xi k$.\\
By Proposition~\ref{prop: redge} and Claim~\ref{claim: setA}, we have 
\[
e_r(G') \leq \binom{|A|}{2} + n^{11/6} \leq \xi^2\frac{k^2}{2} + n^{11/6}.
 \]
Therefore, together with the assumption of Case 4, we obtain that
\[\arraycolsep=1.4pt\def\arraystretch{1.2}
\begin{array}{ll}
\log_r |\mathrm{Ga}(P, G')| & \leq \log_r \left(r^{e_r(G')}2^{e(G') - e_r(G')}\right) \leq \frac12\left(e(G') + e_r(G')\right)\\
& \leq \frac12\left(\left(\frac12 - 2\xi\right)k^2 +  \xi^2\frac{k^2}{2} + n^{11/6}\right)
=\frac{k^2}{4}- \left(\xi - \frac{1}{4}\xi^2\right)k^2  + \frac12 n^{11/6}.
\end{array}
\]
Then by Claim~\ref{claim: count},
\[\arraycolsep=1.4pt\def\arraystretch{1.2}
\begin{array}{ll}
\log_r |\mathrm{Ga}(P, G)|
&\leq \left(\frac12 - \xi^2\right)\left(\frac{n^2}{2} - \frac{k^2}{2}\right) + 3n^{23/12} + \frac{k^2}{4}- \left(\xi - \frac{1}{4}\xi^2\right)k^2  + \frac12 n^{11/6}\\
&\leq  \frac{n^2}{4} - \xi^2\frac{n^2}{2} - \left(\xi - \frac{3}{4}\xi^2\right)k^2 +  4n^{23/12}
\leq \frac{n^2}{4} - \xi^2\frac{n^2}{2}  +  4n^{23/12},
\end{array}
\]
where the last inequality is given by $\xi \ll 1$.\\

\noindent\textbf{Subcase 4.2: } $|A|> \xi k$.\\
By the definition of $A$, the number of non-edges of $G'$ is at least
\begin{equation}\label{eq: nonedge}
\frac12\left(k-1- \left(\frac12 + \xi\right)k\right)|A|
=\frac12\left( \left(\frac12 - \xi\right )k - 1\right)|A|.
\end{equation}
We first claim that 
\begin{equation}\label{eq: sizeA}
|A| \leq \frac{1 - 8\xi}{1 - 2\xi} k,
\end{equation}
 as otherwise we would obtain that the number of non-edges of $G'$ is more than
\[
\frac12\left(\frac12 - \xi\right )k \cdot \frac{1 - 8\xi}{1 - 2\xi}k - \frac{|A|}{2}\geq \left(\frac14 - 2\xi\right)k^2 - \frac{k}{2}
\]
which contradicts the assumption of Case 4. Inequality~(\ref{eq: sizeA}) implies that
\begin{equation}\label{eq: ineq}
(1 -2 \xi)k - |A| \geq 4\xi k.
\end{equation}
By Propositions~\ref{prop: edge} and~\ref{prop: redge}, since $|A| > \xi k > \xi^3 n$, we have 
\begin{equation*}
e_r(G') \leq e(G'[A]) + n^{11/6} \leq \frac{|A|^2}{4} + \frac{e^4}{6n^{1/3}\xi^9}|A|^2 + n^{11/6},
\end{equation*}
as otherwise we would find more than $n^{-1/3}\binom{n}{3}$ rainbow triangles, which contradicts the assumption that $P$ is a Gallai $r$-template of $G$.
Since $\xi\gg n^{-1/33}$, we have 
\begin{equation}\label{eq: redge}
e_r(G') \leq \frac{|A|^2}{4} + \frac{\xi^2}{2}|A|^2 + n^{11/6}.
\end{equation}
Combining (\ref{eq: nonedge}), (\ref{eq: ineq}) and (\ref{eq: redge}), we have 
\[\arraycolsep=1.4pt\def\arraystretch{1.2}
\begin{array}{ll}
\log_r |\textrm{Ga}(P, G')|& \leq \frac12 \left( e(G') + e_r(G') \right) 
\leq \frac12 \left( \binom{k}{2} -  \frac12\left( \left(\frac12 - \xi\right )k - 1\right)|A| + \frac{|A|^2}{4} + \frac{\xi^2}{2}|A|^2 + n^{11/6}\right)\\
& \leq \frac{k^2}{4} - \frac{|A|}{8}\left( (1 - 2\xi)k - |A|\right) + \frac{\xi^2}{4}|A|^2 + \frac12 n^{11/6}
\leq \frac{k^2}{4} - \frac{\xi}{2}|A|k + \frac{\xi^2}{4}|A|^2+ \frac12 n^{11/6}.
\end{array}
\]
Then by Claim~\ref{claim: count} and the assumption of Subcase 4.2, we obtain that
\[\arraycolsep=1.4pt\def\arraystretch{1.2}
\begin{array}{ll}
\log_r |\mathrm{Ga}(P, G)|
&\leq \left(\frac12 - \xi^2\right)\left(\frac{n^2}{2} - \frac{k^2}{2}\right) + 3n^{23/12} + \frac{k^2}{4} - \frac{\xi}{2}|A| k + \frac{\xi^2}{4}|A|^2 + \frac12 n^{11/6}\\
&< \left(\frac12 - \xi^2\right)\left(\frac{n^2}{2} - \frac{k^2}{2}\right) + 3n^{23/12} + \frac{k^2}{4} - \frac{\xi^2}{2}k^2 + \frac{\xi^2}{4}n^2 + \frac12 n^{11/6}\\
& \leq \frac{n^2}{4} - \xi^2 \frac{n^2}{4} + 4n^{23/12}.
\end{array}
\]

\end{proof}

\noindent\textit{Proof of Theorem~\ref{rgraph: middle}. }
Let $\mathcal{C}$ be the collection of containers given by Theorem \ref{container}. Theorem \ref{container} indicates that every Gallai $r$-coloring of $G$ is a subtemplate of some $P\in \mathcal{C}$ and $|\mathcal{C}|\leq 2^{cn^{-1/3}\log^2 n\binom{n}{2}}$ for some constant $c$, which only depends on $r$.  
We may assume that all templates $P$ in $\mathcal{C}$ are Gallai $r$-templates of $G$. By Property (ii) of Theorem~\ref{container}, we always have $\mathrm{RT}(P)\leq n^{-1/3}\binom{n}{3}$. Suppose that for a template $P$ there exists an edge $e\in E(G)$ with $|P(e)|=0$. Then we would obtain $|\mathrm{Ga}(P, G)| = 0$ as a Gallai $r$-coloring of $G$ requires at least one color on each edge. 
Now applying Lemma~\ref{lemma: middle} on every container $P\in \mathcal{C}$, we obtain that the number of Gallai $r$-colorings of $G$ is at most
\[
\sum_{P\in \mathcal{C}}|\mathrm{Ga}(P, G)|\leq |\mathcal{C}| \cdot r^{\frac{n^2}{4} - \xi^3\frac{n^2}{2} + 4n^{23/12}}< r^{\lfloor n^2/4 \rfloor},
\]
where the last inequality follows from $\xi\gg n^{-1/36}$ for $n$ sufficiently large.

\bibliographystyle{siam}

\end{document}